\documentclass[10pt]{article}
\usepackage{amsmath,amsthm,amsfonts,amssymb}
\usepackage[letterpaper,margin=1in]{geometry}
\usepackage{graphicx}
\usepackage[numbers,sort]{natbib}
\usepackage[colorlinks,linkcolor=blue, citecolor=blue]{hyperref}
\usepackage{cleveref}
\usepackage{authblk}
\usepackage{bm}
\usepackage{bbm}
\usepackage{amsbsy}
\usepackage{enumitem}
\usepackage{float}
\usepackage{cases} 
\usepackage[dvipsnames]{xcolor}
\usepackage{subcaption}
\usepackage{tikz}
\usetikzlibrary{arrows.meta}
\usepackage{changepage}

\usepackage{soul} 
\usepackage[normalem]{ulem}
\setulcolor{blue}

\usepackage{stmaryrd} 
\usepackage{preamble} 
\theoremstyle{plain}

\newtheorem{thm}{Theorem}[section]
\newtheorem{lem}[thm]{Lemma}

\theoremstyle{remark}

\newtheorem{rem}[thm]{Remark}
\newtheorem{example}[thm]{Example}

\theoremstyle{definition}
\newtheorem{defn}[thm]{Definition}
\newtheorem{assump}[thm]{Assumption}
\crefname{thm}{Theorem}{Theorems}
\crefname{lem}{Lemma}{Lemmas}

\numberwithin{equation}{section}

\begin{document}

\title{Asymptotics of ultra-high-dimensional generalized spiked sample covariance matrix}
\author[1]{Wonjun Seo\thanks{E-mail: wseo@ucdavis.edu}}
\affil[1]{Department of Statistics, University of California, Davis}
\date{\vspace{-5ex}}
\maketitle
\begin{center}

{\bfseries Abstract}

\end{center}

\begin{adjustwidth}{0.25in}{0.25in}

\fontsize{8pt}{10pt}\selectfont

This paper investigates the asymptotics of eigenstructure of sample covariance matrix under the spiked covariance matrix model in ultra-high-dimensional settings, where the dimensionality can grow much faster than the sample size with $ p \asymp n^{\alpha} $, $ \alpha > 1 $. We establish the first-order convergence limits of eigenvalue locations and eigenvector projections of properly scaled sample covariance matrix. Our results are extensions of \cite{bloemendal16,ding21}.

\end{adjustwidth}



\section{Introduction} \label{sec:intro}
The sample covariance matrix is of fundamental importance in multivariate statistics. In the classical regime, where the dimensionality $ p $ is fixed while the sample size $ n $ tends to infinity, it is well-known that the sample covariance matrix is a consistent estimator of the population covariance matrix. Consequently, the plug-in principle, replacing population covariance matrix by the sample covariance matrix, yields asymptotically optimal procedures for a wide range of methodologies, including principal component analysis, discriminant analysis, and Hotelling's $T^2$ test \cite{anderson03,muirhead82}.

However, this breaks down when $ p $ is no longer negligible compared to $ n $, as the sample covariance matrix loses its consistency \cite{yzb15}. In particular, the eigenstrucure of the sample covariance matrix can deviate significantly from its population courterpart, leading to substantial distortions in statistical procedures that rely on the plug-in principle. The situation becomes even more severe when $ p $ is much larger than $ n $, which is common in modern high dimensional settings, as procedures that rely on the precision matrix may fail completely.

To address this issue, a specific structural assumption is often imposed on the population covariance matrix $ \Sigma $. One of the most widely studied frameworks in this context is the spiked covariance matrix model, first introduced by Johnstone \cite{johnstone01}. Johnstone's spiked covariance matrix model assumes that $ p $ and $ n $ are comparable and that the population covariance matrix is a finite-rank perturbation of the identity matrix as follows
\begin{equation} \label{eq:Johnstone}
	\widetilde{\Sigma}^{\mathrm{Johnstone}} = I_{p} + \sum_{i=1}^{r} \sfd_{i} \bfv_{i} \bfv_{i}^{\top},
\end{equation}
where $ r \in \bbN $ is a fixed number, $ \sfd_{i} > 0 $ are some positive fixed numbers, and $ \bfv_{i} \in \bbR^{p} $ are orthonormal vectors. The key idea of this model is that a few \emph{spikes} in the spectrum represent dominant directions or factors, while the remaining eigenvalues are set to be 1. Over the past few years, many efforts have been made to analyze the asymptotic behavior of the eigenstructure of the sample covariance matrix under this model. Building on these results, a variety of statistical procedures have been developed. We refer to \cite{pa14,jp18,bs10,yzb15,cl22} and references therein for a comprehensive review.

Despite its success, Johnstone’s spiked covariance matrix model has certain limitations. In particular, the assumption that the bulk eigenvalues are all equal to 1 can be overly restrictive in practice, where the underlying covariance structure may be more complex. Moreover, most existing results are established in the regime where $ p $ is comparable to $ n $, and do not directly extend to the regime where $ p $ is much larger than $ n $.

In this paper, we study the asymptotic eigenstructure of sample covariance matrix under a more generalized spiked covariance matrix model and the regime where $ p $ is much larger than $ n $. More specifically, we consider
\begin{equation} \label{eq:sample covmat intro}
	\widetilde{Q} = \widetilde{\Sigma}^{1/2} XX^{\top} \widetilde{\Sigma}^{1/2},
\end{equation}
where $X$ is a $ p \times n $ random matrix whose entries are centered and properly scaled i.i.d\ real random variables, and $ \widetilde{\Sigma} $ is a $ p \times p $ positive definite matrix. Unlike the Johnstone's spiked covariance matrix model in \eqref{eq:Johnstone}, $ \widetilde{\Sigma} $ is assumed to be a finite-rank perturbation of a general positive definite matrix $ \Sigma_{0} $ with eigenvalues $ \{ \sigma_{i} \} $ and eigenvectors $ \{ \bfv_{i} \} $, given by
\begin{equation} \label{eq:spiked model intro}
	\widetilde{\Sigma} = \Sigma_{0} + \sum_{i=1}^{r} \sfd_{i} \sigma_{i} \bfv_{i} \bfv_{i}^{\top},
\end{equation}
under mild conditions. For the dimensionality, we assume that
\begin{equation} \label{eq:uhd intro}
	p \asymp n^{\alpha}, \ \text{ for some constant } \alpha \in (1, \infty),
\end{equation}
so that $ p $ is allowed to grow much faster than $ n $. To distinguish this regime from the comparable regime where $ \alpha = 1 $, we refer to it as the ultra-high-dimensional regime.

\subsection{Related works} \label{ssec:review}
In this subsection, we briefly review existing related results on the spiked covariance matrix model. For the notational convenience, we denote
\begin{equation} \label{eq:phi}
	\phi \equiv \phi_{n} := \frac{p}{n}.
\end{equation}

The first major finding was the identification of the \emph{outliers} in the spectrum of $ \widetilde{Q} $, established by Baik, Ben Arous, and Peché \cite{bbp05}. They showed that if $ \sfd_{i} > \phi^{1/2} $, the $ i $ th largest eigenvalue of $ \widetilde{Q} $ separates from the majority of the other eigenvalues, under the Johnstone's spiked covariance matrix model in \eqref{eq:Johnstone}, for $ \alpha = 1 $ in \eqref{eq:uhd intro}, assuming complex Gaussian entries for $ X $. This phenomenon is now widely known as the BBP transition. The behavior of outlier eigenvectors was established by Paul \cite{paul07}. More precisely, the $ i $ th eigenvector of $ \widetilde{Q} $ is concentrated on a cone with the axis parallel to $ \bfv_{i} $ if $ \sfd_{i} > \phi^{1/2} $, under the same model and regime, assuming real Gaussian entries. Since these foundational works, outlier eigenvalues and eigenvectors of sample covariance matrices have been extensively investigated under weaker assumptions and in various settings \cite{bs06,by08,bn11,by12,bloemendal13,bloemendal16,wf17,ding21,bdww22}.

The behavior of non-outlier eigenvalues and eigenvectors can be analyzed using the local laws. In \cite{bloemendal16}, the authors established the eigenvalue sticking and delocalization, which characterize the asymptotic behavior of non-outlier eigenvalues and eigenvectors, respectively. They also derived the large deviation bounds of the locations of outlier eigenvalues and the generalized components of outlier eigenvectors. These results are based on the isotropic local laws developed in \cite{bloemendal14}, and are proved under Johnstone’s model and for $ \alpha \in (0, \infty) $. In a similar spirit, \cite{ding21} derived the analogous results under the generalized spiked covariance matrix model in \eqref{eq:spiked model intro} for $ \alpha = 1 $, using the anisotropic local laws established by Knowles and Yin \cite{ky17}.

Our paper can be viewed as a synthesis of these two works, as we consider the generalized spiked covariance matrix model with $ \alpha \in (1, \infty) $. While the outlier eigenvalues in this setting have recently been studied in \cite{jing25}, our work also incorporates the behavior of non-outlier eigenvalues and both outlier and non-outlier eigenvector projections.

Finally, we mention that the BBP-type transitions have also been established for other classes of spiked random matrices arising in statistics, including signal-plus-noise matrices \cite{benaych11,benaych12,ding20,bdw21,feldman23}, spiked separable sample covariance matrices \cite{dy21}, spiked Fisher matrices \cite{wy17}, and spiked correlation matrices \cite{mjmy21}. Another important class in random matrix theory is the Wigner matrices, and the analogous results for the deformed Wigner matrices have been studied in \cite{bn11,ky13,prs13,ky14,ls16}.

\subsection{An overview of main results} \label{ssec:overview}

In this subsection, a brief overview of main results is provided.

The first set of results concerns the asymptotic locations of non-trivial eigenvalues $ \{ \widetilde{\lambda}_{i} \}_{i=1}^{n} $ of $ \widetilde{Q} $. Under the assumption that the spike magnitudes $ { \sfd_{i} }_{i=1}^{r} $ exceed certain thresholds given in \eqref{eq:spike1}, the first $ r $ largest eigenvalues $ \{ \widetilde{\lambda}_{i} \}_{i=1}^{r} $ emerge as outliers. They detach from the rest eigenvalues and concentrate around deterministic locations determined by $ \{\sfd_{i}\}_{i=1}^{r} $. On the other hand, the non-outlier eigenvalues $ \{ \widetilde{\lambda}_{i} \}_{i=r+1}^{n} $ concentrate around the corresponding quantiles of the density $ \varrho $, which is the limiting spectral distribution of the sample covariance matrix under the non-spiked model; see Section \ref{ssec:asymp law} for details.

Next, the results on the eigenvectors $ \{ \widetilde{\bfu}_{i} \}_{i=1}^{n} $ of $ \widetilde{Q} $ are summarized. To avoid the ambiguity of the phase of $ \widetilde{\bfu}_{i} $ and to quantify the alignment with the population eigenvectors $ \{ \bfv_{j} \} $, the squared spectral projections $ \langle \widetilde{\bfu}_{i}, \bfv_{j} \rangle^{2} $ are considered. The first part of Theorem \ref{thm:eigenvector} shows that each outlier eigenvector $ \widetilde{\bfu}_{i} $, $ 1 \le i \le r $, concentrates on a cone with the axis parallel to corresponding population eigenvector $ \bfv_{i} $, with a deterministic aperture depending on $ \sfd_{i} $. On the other hand, the non-outlier eigenvectors are delocalized in any direction of $ \bfv_{j} $. 

Although Theorem \ref{thm:eigenvector} deals with individual eigenvector projections, weighted sums of projections are often of interest in statistical applications, such as principal component analysis (cf. Example \ref{ex:pca}) and optimal shrinkage estimation \cite{donoho18,dly24}. In such applications, one is interested in the deterministic equivalent of 
\[ \sum_{j=1}^{p} l_{j} \langle \widetilde{\bfu}_{i}, \bfv_{j} \rangle^{2}, \]
where $ \{ l_{j} \} $ is some positive deterministic sequence. For $ 1 \le i \le r $, Theorem 3.4 \eqref{eq:outlier eigenvector} alone is insufficient for this purpose, as it only yields the bound $ | \langle \widetilde{\bfu}_{i}, \bfv_{j} \rangle |^{2} \le p^{-1+\epsilon} $ with high probability for $ r+1 \le j \le p $. To address this issue, a perturbation argument introduced in \cite{dly24} is employed, leading to the first-order limit of the weighted projections in Theorem \ref{thm:spiked outlier eigenvector projection sum}.

Before proceeding, we discuss several possible future works. First, we assume the spikes are above the critical points by constant-order gap for simplicity. This means that we only consider the somewhat trivial supercritical regime. We believe that the phase transition occurs on the scale $ n^{-1/3} $, as in other models. Second, the second-order limits (distributional limits) are also important to develop statistical inference procedures. We refer to \cite{bloemendal16,bdww22} for those who are interested in joint distribution of outlier eigenvalues and eigenvectors under simpler settings. Lastly, the deterministic equivalent of $ \sum_{j=1}^{p} l_{j} | \langle \widetilde{\bfu}_{i}, \bfv_{j} \rangle |^{2} $ for $ r+1 \le i \le p $ is also important in optimal shrinkage estimation. However, we do not pursue this direction, as it requires establishing the quantum unique ergodicity (QUE) estimate, which involves more sophisticated techniques. The QUE estimates for sample covariance matrix under the comparable regime $ \alpha = 1 $ have been recently established in \cite{dly24}. We believe similar argument with a slight modification will also work in the ultra-high-dimensional regime. We will pursue these generalizations in our future works.

\subsubsection*{Organzation}
This paper is organized as follows. In Section \ref{sec:model}, the non-spiked and spiked covariance matrix model, along with key assumptions, are introduced, and some preliminary results on the asymptotic laws of sample covariance matrices under the non-spiked covariance matrix model are summarized. Section \ref{sec:results} presents the main results of the paper, where the eigenvalue locations and eigenvector projections under the spiked covariance matrix model are derived. In Appendix \ref{app:local law}, we introduce the local laws, which serve as key inputs to the proofs of the main results. The proofs of the results are deferred to Appendix \ref{app:proof}.

\subsubsection*{Notation}
In this paper, we follow the notational conventions commonly used in the random matrix theory and high-dimensional statistics literature. The sample size $ n $ is the fundamental parameter, and we often omit the explicit dependence on $ n $. For instance, although \eqref{eq:uhd intro} indicates that the dimensionality $ p $ depends on $ n $, we write $ p \equiv p_{n} $ as $ p $. A constant that does not depend on $ n $ is referred to as a \textit{fixed} constant. Let $ a_{n} $ and $ b_{n} $ be positive real sequences. We write $ a_{n} \asymp b_{n} $ if there exists a positive fixed constant $ C > 0 $ such that $ C^{-1} a_{n} \le b_{n} \le C a_{n} $ and $ a_{n} \gtrsim b_{n} $ if $ a_{n} \ge C b_{n} $ for sufficiently large $ n $. In addition, we write $ a_{n} \gg b_{n} $ if $ a_{n}/b_{n} \to \infty $ as $ n \to \infty $. The Euclidean norm of a vector $ \mathbf{v} \in \mathbb{R}^{p} $ is denoted by $ \| \bfv \| $. The operator norm of a matrix $ A \in \mathbb{R}^{p \times p} $ is denoted by $ \| A \| $. The complex number will be written as $ z = E + \i \eta $, where $ E $ is the real part and $ \eta $ is the imaginary part. We let $ \bbC_{+} = \left \{ z = E + \mathrm{i} \eta: \eta > 0 \right \} $. For any $ i, j \in \bbN $ with $ i < j $, we denote $ \llb i \rrb = \{ 1, 2, \dots, i \} $ and $ \llb i, j \rrb = \{ i, i+1, \dots, j \} $.

\section{Model} \label{sec:model}
In this section, we introduce our model, assumptions, and several notations required to state the main results.

\subsection{Spiked covariance matrix model and assumptions} \label{ssec:spiked}
Throughout the paper, we consider the spiked covariance matrix model with general structures, as considered in \cite{ding21, dly24}.
Let $ \Sigma_{0} $ be a deterministic $ p \times p $ positive definite matrix admitting the spectral decomposition
\begin{equation} \label{eq:non-spiked model}
	\Sigma_0 = V \Lambda_{0} V^{\top} = \sum_{i=1}^{p} \sigma_{i} \bfv_{i} \bfv_{i}^{\top}, \quad 0 < \sigma_{p} \le \dots \le \sigma_{1},
\end{equation}
where $\{\sigma_i\}$ are the eigenvalues and $\{\bfv_i\}$ are the associated eigenvectors of $ \Sigma_{0} $. For some fixed integer $r \in \bbN$, we introduce $ r $ \emph{spikes} to $ \Sigma_0 $, resulting in another $ p \times p $ positive definite matrix $ \widetilde{\Sigma} $ with spectral decomposition
\begin{equation} \label{eq:spiked model}
	\widetilde{\Sigma}= V \widetilde{\Lambda} V^{\top} = \sum_{i=1}^{p} \widetilde{\sigma}_{i} \bfv_{i} \bfv_{i}^{\top}, \quad 0 < \widetilde{\sigma}_{p} \le \dots \le \widetilde{\sigma}_{1},
\end{equation}
where the spiked eigenvalues $ \{ \widetilde{\sigma}_{i} \} $ are defined as
\begin{equation} \label{eq:spiked eigenvalue}
	\widetilde{\sigma}_{i} = \begin{cases}
		(1 + \sfd_{i})\sigma_{i}, & i \in \llb r \rrb \\
		\sigma_{i}, & i \in \llb r+1, p \rrb 
	\end{cases},
\end{equation}
where $ 0 < \sfd_r \le \dots \le \sfd_{1} $ are parameters used to characterize the spikes.
Note that when $\sfd_i \equiv 0$ for all $1 \leq i \leq r$ or $r=0$, $ \widetilde{\Sigma} $ contains no spikes and coincides with $\Sigma_0$. In what follows, we refer to $ \Sigma = \Sigma_0 $ as the \emph{non-spiked covariance matrix model} and $ \Sigma = \widetilde{\Sigma} $ as the \emph{spiked covariance matrix model}.

Under each model, we define the corresponding sample covariance matrix as 
\begin{equation} \label{eq:sample covmat}
	Q_{0} = \Sigma_{0}^{1/2} XX^{\top} \Sigma_{0}^{1/2}, \quad \widetilde{Q} = \widetilde{\Sigma}^{1/2} XX^{\top} \widetilde{\Sigma}^{1/2},
\end{equation}
where $ X \in \bbR^{p \times n} $ is a random matrix whose entries are i.i.d.\:centered random variables. By using the same realization of $ X $ for both $ Q_{0} $ and $ \widetilde{Q} $, any discrepancy between these two sample covariance matrices originates solely from the difference between the population covariance matrices $ \Sigma_{0} $ and $ \widetilde{\Sigma} $.

We impose the following assumptions on $ X $, $ \Sigma_{0} $, and $ \widetilde{\Sigma} $.

\begin{assump} ~\ \label{assump:main}
	\begin{enumerate}[label = (\roman*)]
		\item \textbf{Dimensionality.} There exists a constant $ \alpha \in (1, \infty) $ such that
		\begin{equation} \label{eq:uhd}
			p \asymp n^{\alpha}.
		\end{equation}
		
		\item \textbf{Entries of $ X $.} Let $ X = (x_{ij}) $ be a $ p \times n $ matrix whose entries are i.i.d.\:real random variables with
		\begin{equation} \label{eq:centered and scaling}
			\bbE x_{ij} = 0, \quad \bbE x_{ij}^{2} = \frac{1}{\sqrt{pn}}.
		\end{equation}
		We further assume that for each $ k \in \bbN $, $ k $-th norm of $ (pn)^{1/4} x_{ij} $ is uniformly bounded, i.e., there exists a constant $ C_{k} > 0 $ such that
		\begin{equation} \label{eq:moments}
			\bbE \left | (pn)^{1/4} x_{ij} \right |^{k} \le C_{k}.
		\end{equation}
		
		\item \textbf{Spectrum of $ \Sigma_{0} $.} Let $ \varsigma \in (0, 1) $ be a fixed small constant. We assume that $ \{ \sigma_{i} \} $ in \eqref{eq:non-spiked model} satisfy
		\begin{equation} \label{eq:spectrum of Sigma0}
			\varsigma \le \sigma_{p} \le \cdots \le \sigma_{1} \le \varsigma^{-1}.
		\end{equation}
		
		\item \textbf{Spikes.} Let $ \varpi \in (0, 1) $ be a fixed small constant. We assume that $ \{ \widetilde{\sigma}_{i} \}_{i=1}^{r} $ in \eqref{eq:spiked eigenvalue} satisfy that for all $ i \in \llb r \rrb $
		\begin{equation} \label{eq:spike1}
			-\phi^{1/2} (\mathsf{c}_{1} + \varpi)^{-1} < \widetilde{\sigma}_{i} < \phi^{1/2} \varpi^{-1},
		\end{equation}
		where $ \phi $ is defined in \eqref{eq:phi}, and $ \mathsf{c}_{1} < 0 $ with $ |\mathsf{c}_{1}| \asymp 1 $ will be defined in Lemma \ref{lem:properties of rho and f} (i) below.
		We further assume that 
		\begin{equation} \label{eq:spike2}
			\min_{i \neq j \in \llb r \rrb} \left | \widetilde{\sigma}_{i} - \widetilde{\sigma}_{j} \right | > \phi^{1/2} \varpi.
		\end{equation}
	\end{enumerate}
\end{assump}

\begin{rem}
	Several remarks are in order. First, condition (i) indicates that we consider the ultra-high-dimensional regime, in which $ p $ grows much faster than $ n $. Second, the scaling $ (pn)^{-1/2} $ in \eqref{eq:centered and scaling} is adopted to accommodate the ultra-high-dimensional regime. In contrast, in the classical low-dimensional regime $ (\alpha < 1) $ and the comparable regime $ (\alpha = 1) $, the scaling $ n^{-1} $ is typically used. The moment assumption in \eqref{eq:moments} can be relaxed to finitely many moments by a truncation argument, but such generalizations are not pursued here for simplicity. Third, \eqref{eq:spectrum of Sigma0} is a mild condition ensuring that the eigenvalues of $ \Sigma_{0} $ are bounded from both above and below. Finally, the lower bound in condition \eqref{eq:spike1} is imposed to ensure that $\sfd_i$ represents a true spike in the sense that the associated eigenvalue of the sample covariance matrix $ \widetilde{Q} $ emerges as an outlier. For simplicity, we do not consider the case where $ \widetilde{\sigma}_{i} $ is smaller than $ -\phi^{1/2} \mathsf{c}_{1}^{-1} $, in which case no outlier is created. Although this transition is not analyzed explicitly, the quantity $ -\phi^{1/2} \mathsf{c}_{1}^{-1} $ serves as the critical threshold distinguishing the subcritical and supercritical regimes, as in other BBP-type transitions. Moreover, we assume a constant-order gap between $ \phi^{-1/2} \widetilde{\sigma}_{i} $ and $ -\sfc_{1}^{-1}$ for clarity of exposition. This condition can be relaxed to gaps of order $ n^{-1/3 + \epsilon} $ for any small constant $\epsilon > 0$, following \cite{bloemendal16,dy21}. The upper bound in \eqref{eq:spike1} is imposed only for notational convenience and can be readily removed without difficulty. Condition \eqref{eq:spike2} ensures that the spiked eigenvalues $\{\widetilde{\sigma}_{i}\}_{i=1}^{r}$ are well-separated, so that the corresponding outlier eigenvectors of $ \widetilde{Q} $ are uniquely defined. Nevertheless, this condition can be relaxed to include degenerate cases by considering the eigenspace as in \cite{dy21, bloemendal16}. We will explore these generalizations in future work.
\end{rem}

\subsection{Asymptotic laws under the non-spiked covariance matrix model} \label{ssec:asymp law}
In this subsection, we summarize some results on the asymptotic laws of the eigenvalues of sample covariance matrix $ Q_{0} $ under the non-spiked covariance matrix model. Recall that for any $ p \times p $ symmetric matrix $ A $, its empirical spectral distribution (ESD) is defined as the empirical probability measure $ \mu_{A} = \frac{1}{p} \sum_{i=1}^{p} \delta_{\lambda_{i}(A)} $, where $ \{ \lambda_{i}(A) \} $ are the eigenvalues of $ A $. For a probability measure $ \mu $ on $ \bbR $, its Stieltjes transform $ m_{\mu}: \mathbb{C}^{+} \to \mathbb{C}^{+} $ is defined by 
\[ m_{\mu}(z) = \int_{\bbR} \frac{1}{x-z} \mu (\dif x), \quad z \in \mathbb{C}^{+}. \]

We now characterize the asymptotics of the ESD of $ Q_{0} $ under Assumption \ref{assump:main} (i)-(iii). Since $ Q_{0} $ and its companion $ \cQ_{0} := X^{\top} \Sigma_{0} X $ share the non-trivial eigenvalues, we focus on the ESD of $ \cQ_{0} $. From \cite{ding23}, it is known that the ESD of $ \cQ_{0} $ has the asymptotic deterministic equivalent, denoted by $ \varrho \equiv \varrho_{\Sigma_{0}} $. The probability measure $ \varrho $ can be best described through its Stieltjes transform $ m \equiv m_{\Sigma_{0}} $. To be more specific, as given in \cite[Lemma 2.3]{ding23}, for $ z \in \bbC^{+} $, $ m(z) $ is characterized as the unique solution to the equation
\begin{equation} \label{eq:self-consistent equation}
	z = f(m(z)), \quad m(z) \in \bbC^{+},
\end{equation}
where, recalling that $ \phi = p/n $ in \eqref{eq:phi}, $ f \equiv f_{\Sigma_{0}}: \bbC^{+} \to \bbC^{+} $ is defined as
\begin{equation} \label{eq:f}
	f(x) = - \frac{1}{x} + \frac{\phi^{1/2}}{p} \sum_{i=1}^{p} \frac{\sigma_{i}}{1+\phi^{-1/2} \sigma_{i} x}.
\end{equation}
Note that the domain of $ f $ can be readily extended to the real projective line $ \overline{\mathbb{R}} = \mathbb{R} \cup \{\infty\} $ except $ 0 $ and $ \{ \phi^{1/2} \sigma_{i}^{-1} \} $.

The following lemma established in \cite{ding24} summarizes some properties of $ f $ and $ \varrho $. Denote
\begin{equation} \label{eq:m_1}
	\mathfrak{m}_{1} := \int x \pi_{0}(\dif x),
\end{equation}
where $ \pi_{0} := \frac{1}{p} \sum_{i=1}^{p} \delta_{\sigma_{i}} $ is the ESD of $ \Sigma_{0} $.

\begin{lem} \label{lem:properties of rho and f}
	Under Assumption \ref{assump:main} (i)-(iii) holds, the following hold for sufficiently large $ n $.
	\begin{enumerate}[label = (\arabic*)]
		\item $ f $ has two critical points, denoted by $ \mathsf{c}_{1} < 0 $ and $ \mathsf{c}_{2} > 0 $, with $ | \sfc_{k} | \asymp 1 $.
		\item Define $ \gamma_{+} := f(\mathsf{c}_{1}) $ and $ \gamma_{-} := f(\mathsf{c}_{2}) $. Then, we have
		\begin{equation} \label{eq:properties of gamma pm}
			\gamma_{+} \ge \gamma_{-} > 0, \quad |\gamma_{\pm} - \phi^{1/2} \mathfrak{m}_{1}| = \mathrm{O}(1),
		\end{equation}
		and
		\begin{equation} \label{eq:support of rho}
			\operatorname{supp} \varrho \cap (0, \infty) = [ \gamma_{-}, \gamma_{+} ].
		\end{equation}
		\item $ \varrho $ has the square-root behavior near the edge $ \gamma_{\pm} $.
	\end{enumerate}
\end{lem}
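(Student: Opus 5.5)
The plan is to study $f$ on the real line directly, using the explicit form \eqref{eq:f} together with \eqref{eq:spectrum of Sigma0} and $\phi\to\infty$.

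\textbf{Step 1: expand $f$.} For $|x|\le C$ we have $\phi^{-1/2}\sigma_i|x|\le C\varsigma^{-1}\phi^{-1/2}=\mathrm{o}(1)$, so we may expand the sum geometrically:
\begin{equation*}
	f(x) = -\frac{1}{x} + \phi^{1/2}\mathfrak{m}_{1} - \mathfrak{m}_{2}\, x + \mathrm{O}(\phi^{-1/2}|x|^{2}),
	\qquad \mathfrak{m}_{2}:=\int t^{2}\pi_{0}(\dif t)\in[\varsigma^{2},\varsigma^{-2}].
\end{equation*}
Differentiating term by term gives
\begin{equation*}
	f'(x)=x^{-2}-\mathfrak{m}_{2}+\mathrm{O}(\phi^{-1/2}|x|),
\end{equation*}
with a similar expansion for $f''$. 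The leading-order equation $x^{-2}=\mathfrak{m}_2$ has roots $\pm\mathfrak{m}_2^{-1/2}$, and these are of order one. Since $f''(x)\approx -2x^{-3}$ is bounded away from zero near these points, the implicit function theorem (or a monotonicity argument for $f'$ on $[\varsigma/2,2/\varsigma]$ and on the mirror interval) yields exactly one critical point $\mathsf{c}_1<0$ and one $\mathsf{c}_2>0$. Both satisfy $|\mathsf{c}_k|\asymp1$ and $\mathsf{c}_k=\pm\mathfrak{m}_2^{-1/2}+\mathrm{O}(\phi^{-1/2})$.

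\textbf{Step 2: exclude other critical points.} This is where the real work lies, because the poles $-\phi^{1/2}\sigma_i^{-1}$ sit at distance about $\phi^{1/2}$ from the origin. The argument splits into regions.
\begin{itemize}
	\item For $|x|\le \epsilon\phi^{1/2}$ with $\epsilon$ small but fixed, the bound $|1+\phi^{-1/2}\sigma_ix|\ge 1/2$ holds. Writing out $f'(x)=x^{-2}-p^{-1}\sum\sigma_i^2(1+\phi^{-1/2}\sigma_ix)^{-2}$ shows that the second term is comparable to $1$. Hence $f'$ vanishes only where $|x|\asymp1$, which is the regime already handled in Step 1.
	\item For $x>0$ with $x$ large, both terms are positive and monotone, and the same comparison works there.
	\item For $x<-\epsilon\phi^{1/2}$, $x$ lies near or past the poles. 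These are outside the relevant domain $\bar{\mathbb{R}}\setminus\{\text{poles}\}$ component containing the support.
\end{itemize}
Following \cite{ding24}, only critical points in the component $(-\phi^{1/2}\sigma_1^{-1},0)\cup(0,\infty)$ are counted. So I would state (1) for that component, and in the remaining region check directly that $x^{-2}$ is much smaller than the sum term. This means $f'<0$ there, so no critical point arises.

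\textbf{Step 3: edges and support.} Plugging the critical points into the expansion gives
\begin{equation*}
	\gamma_{\pm}=\phi^{1/2}\mathfrak{m}_1\pm2\mathfrak{m}_2^{1/2}+\mathrm{O}(\phi^{-1/2}),
\end{equation*}
which proves \eqref{eq:properties of gamma pm}, including $\gamma_+\ge\gamma_->0$ since $\phi^{1/2}\mathfrak{m}_1\ge\varsigma\phi^{1/2}\to\infty$.

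For \eqref{eq:support of rho} I would use the standard characterization (Silverstein--Choi type): a real point $E$ lies outside $\operatorname{supp}\varrho$ if and only if $E=f(x)$ for some real $x$ with $f'(x)>0$. The explicit domain analysis from Steps 1--2 shows the following.
\begin{itemize}
	\item $f$ is increasing on $(\mathsf{c}_1,0)$ and on $(0,\mathsf{c}_2)$ is decreasing; more precisely, $f'<0$ on $(\mathsf{c}_1,0)$ and on $(0,\mathsf{c}_2)$ is ...
\end{itemize}
In terms of the actual sign pattern: $f'>0$ exactly on $(-\phi^{1/2}\sigma_1^{-1},\mathsf{c}_1)\cup(\mathsf{c}_2,\infty)$ intersected with the region near $0$. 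The images of these intervals are $(\gamma_+,\infty)$ and $(0,\gamma_-)$ respectively. Their complement in $(0,\infty)$ is $[\gamma_-,\gamma_+]$.

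\textbf{Step 4: square-root behavior.} This follows from $f''(\mathsf{c}_k)\ne0$, with $|f''(\mathsf{c}_k)|\asymp1$. Inverting $z=f(m)$ near $\mathsf{c}_k$ gives $m(z)-\mathsf{c}_k\sim\sqrt{2(z-\gamma)/f''(\mathsf{c}_k)}$. Hence $\mathrm{Im}\, m(E+\mathrm{i}0)\asymp\sqrt{|E-\gamma_\pm|}$ inside the support near each edge, and the density is $\pi^{-1}\mathrm{Im}\, m$.

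I expect the main obstacle to be Step 2, specifically the uniform control of $f'$ in the intermediate region $1\ll|x|\lesssim\phi^{1/2}$ on the negative side. This is where the expansion breaks down and I must argue monotonicity directly.
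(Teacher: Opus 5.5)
\textbf{Gap: Step 3 has the sign of $f'$ backwards.} Steps 1, 2 and 4 are sound in outline, and once corrected they would give a self-contained alternative to the paper, which simply cites \cite{ding24}. The problem is Step 3, the step that actually yields \eqref{eq:support of rho}.

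Since $f'(x)=x^{-2}-p^{-1}\sum_i\sigma_i^2(1+\phi^{-1/2}\sigma_i x)^{-2}$ and the $x^{-2}$ term dominates near the origin, $f'>0$ exactly on $(\mathsf{c}_1,0)\cup(0,\mathsf{c}_2)$. By contrast, $f'<0$ on $(-\phi^{1/2}\sigma_1^{-1},\mathsf{c}_1)$, on $(\mathsf{c}_2,\infty)$, and beyond the first pole. On $(\mathsf{c}_2,\infty)$ this is visible directly, since $f'(x)\sim(1-\phi)x^{-2}$ as $x\to\infty$.

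Your own Step 2 already asserts $f'<0$ for $x<-\epsilon\phi^{1/2}$, which contradicts your Step 3 claim that $f'>0$ on $(-\phi^{1/2}\sigma_1^{-1},\mathsf{c}_1)$. The first bullet of Step 3 also contradicts itself.

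This is not cosmetic. In the Silverstein--Choi criterion, only points $x$ with $f'(x)>0$ certify that $f(x)$ lies outside the support: off the support $m'(E)=\int(x-E)^{-2}\varrho(\mathrm{d}x)>0$, hence $f'(m(E))=1/m'(E)>0$. Images of intervals where $f'<0$ therefore prove nothing. That $f$ happens to map $(-\phi^{1/2}\sigma_1^{-1},\mathsf{c}_1)$ and $(\mathsf{c}_2,\infty)$ onto $(\gamma_+,\infty)$ and $(0,\gamma_-)$ is a coincidence of this particular $f$, not an argument.

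The correct version runs as follows.
\begin{itemize}
\item On $(\mathsf{c}_1,0)$, $f$ increases from $\gamma_+$ to $+\infty$, because $-1/x\to+\infty$ as $x\uparrow 0$.
\item On $(0,\mathsf{c}_2)$, $f$ increases from $-\infty$ to $\gamma_-$.
\item Hence $\mathbb{R}\setminus\operatorname{supp}\varrho=(-\infty,\gamma_-)\cup(\gamma_+,\infty)$, consistent with $m(\gamma_+)=\mathsf{c}_1<0$ and $m(\gamma_-)=\mathsf{c}_2>0$.
\end{itemize}

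\textbf{The ``main obstacle'' in Step 2 is easy.} Handling it cleanly also gives you the full sign pattern for Step 3. Write
$f'(x)=x^{-2}\bigl(1-\phi\,p^{-1}\sum_i h(\phi^{-1/2}\sigma_i x)\bigr)$ with $h(t)=t^2/(1+t)^2$. The function $h$ increases from $0$ to $1$ on $(0,\infty)$, decreases from $+\infty$ to $0$ on $(-1,0)$, and exceeds $1$ on $(-\infty,-1)$. Consequently the bracket behaves as follows.
\begin{itemize}
\item On $(0,\infty)$ it decreases strictly from $1$ to $1-\phi<0$, so it has exactly one zero there.
\item On $(-\phi^{1/2}\sigma_1^{-1},0)$ it increases strictly from $-\infty$ to $1$, so it has exactly one zero there.
\item Beyond the first pole it is negative. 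There $\phi^{-1/2}\sigma_i x<-\varsigma^2$ for all $i$, so every summand exceeds $\varsigma^4$, while $\phi\to\infty$.
\end{itemize}
This replaces the vague second bullet of Step 2, where the sum term is no longer of order one but decays like $\phi x^{-2}$. It shows that $f$ has exactly two critical points on all of $\mathbb{R}$ minus the poles, so you need not restrict (1) to a component. The sign information Step 3 needs then comes for free.

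Your expansion of $\gamma_\pm$ around $\phi^{1/2}\mathfrak{m}_1$ and the square-root argument via $|f''(\mathsf{c}_k)|\asymp 1$ are fine as written.
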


\begin{proof}
	See Lemma 2.5 and Section B.2 in \cite{ding24}.
\end{proof}

\begin{rem} \label{rem:support of rho}
	From \eqref{eq:self-consistent equation}-\eqref{eq:f} and Lemma \ref{lem:properties of rho and f}, the asymptotics of the ESD $ Q_{0} $ can be obtained once we know $ \Sigma_{0} $, more precisely its ESD $ \pi_{0} $. In particular, the support of $ \varrho $ is given by an bounded interval containing $ \phi^{1/2} \mathfrak{m}_{1} $.
\end{rem}

\begin{rem} \label{rem:Johnstone}
	When $ \Sigma_{0} = I_{p} $, that is, when all $ \{ \sigma_{i} \} $ are equal to one, \eqref{eq:f} feduces to
	\begin{equation} \label{eq:f Johnstone}
		f^{\mathrm{MP}}(x) = - \frac{1}{x}  + \frac{\phi^{1/2}}{1 + \phi^{-1/2} x}.
	\end{equation}
	The corresponding asymptotic density, denoted by $ \varrho^{\mathrm{MP}} $, is the celebrated Marchenko-Pastur law \cite{mp67}, given by
	\begin{equation}
		\varrho^{\mathrm{MP}}(\dif x) := \frac{\phi^{1/2}}{2\pi} \frac{\sqrt{\left [ (x - \gamma_{-}^{\mathrm{MP}}) ( \gamma_{+}^{\mathrm{MP}} - x ) \right ]_{+}}}{x} \dif x,
	\end{equation}
	where $ \gamma_{\pm}^{\mathrm{MP}} = \phi^{1/2} + \phi^{-1/2} \pm 2 $. See \cite[Remark 2.4]{ding23} and \cite[Section 2.1]{bloemendal14} for details. Note that the critical points of $ f^{\mathrm{MP}} $ are given by 
	\[ \mathsf{c}_{1}^{\mathrm{MP}} = - (1 + \phi^{-1/2})^{-1} \quad \text{and} \quad \mathsf{c}_{2}^{\mathrm{MP}} = (1 - \phi^{-1/2})^{-1}. \]
\end{rem}

\begin{rem}
	Under the comparable regime $ (\alpha = 1) $, the function $ f $ may have an even number of critical points, with convention that a degenerate critical point is counted twice, and that the support of $ \varrho $ can have multiple bulk components whose edges are the value of $ f $ at each critical point. See \cite[Remark 2.6]{ding23} and \cite[Lemma 2.4-2.6]{ky17} for details.
\end{rem}

\section{Main results} \label{sec:results}
In this section, we present our main results. For a precise description, we use the notion of \emph{stochastic domination}, first introduced in \cite{eky13}. In random matrix theory literature, this has become a standard tool for presenting the statements of the form ``$ \sfX $ is bounded with high probability by $ \sfY $ up to a small power of $ n $".
\begin{defn}[Stochastic domination] \label{defn:stochastic domination}
	Let
	\[ \sfX = \left ( \sfX^{(n)}(u): n \in \bbN, u \in \sfU^{(n)} \right ), \quad \sfY = \left ( \sfY^{n}(u): n \in \bbN, u \in \sfU^{(n)} \right ) \]
	be two families of nonnegative random variables, where $ \sfU^{(n)} $ is a possibly $ n $-dependent parameter set. $ \sfX $ is said to be stochastically dominated by $ \sfY $, uniformly in $ u $, if for any small $ \epsilon > 0 $ and large $ D > 0 $, we have
	\[ \sup_{u \in \sfU^{(n)}} \bbP \left ( \sfX^{(n)}(u) > n^{\epsilon} \sfY^{(n)}(u) \right ) \le n^{-D}, \]
	for all large enough $ n \ge N_{0}(\epsilon, D) $, and is denoted by $ \sfX \prec \sfY $. For a complex family $ \sfX $, we write $ \sfX \prec \sfY $ if $ |\sfX| \prec \sfY $. For two deterministic nonnegative families $ \sfX $ and $ \sfY $, we write $ \sfX \prec \sfY $ if $ \sfX \le n^{\epsilon} \sfY $ for any small $ \epsilon > 0 $ whenever $ n $ is sufficiently large. We also write $ \sfX = \mathrm{O}_{\prec}(\sfY) $ if $ \sfX \prec \sfY $.
\end{defn}

We write the spectral decomposition of $ \widetilde{Q} $ as
\[ \widetilde{Q} = \sum_{i=1}^{p} \widetilde{\lambda}_{i} \widetilde{\bfu}_{i} \widetilde{\bfu}_{i}^{\top}, \]
where $ \widetilde{\lambda}_{1} \ge \dots \ge \widetilde{\lambda}_{p} \ge 0 $ and $ \{ \widetilde{\bfu}_{i} \} $ are the eigenvalues and the corresponding eigenvectors.

\subsection{Eigenvalue locations} \label{ssec:eigenvalues}
We start with the eigenvalues of $ \widetilde{Q} $. We introduce some notations first. For $ i \in \llb r \rrb $, let 
\begin{equation} \label{eq:a_i}
	\mathfrak{a}_{i} := f \left ( - \phi^{1/2} \widetilde{\sigma}_{i}^{-1} \right ),
\end{equation}
where $ f $ is defined in \eqref{eq:f}. Recall the asymptotic law $ \varrho $ introduced in Section \ref{ssec:asymp law}. We define its quantiles $ \{ \gamma_{i} \} $ by
\begin{equation}\label{eq:gamma_i}
	\int_{\gamma_{i}}^{\infty} \varrho(\dif x) = \frac{i}{n}, \quad i \in \llb n \rrb.
\end{equation}

\begin{thm}
	\label{thm:eigenvalue}
	Suppose that Assumption \ref{assump:main} holds.
	\begin{enumerate}[label = (\roman*)]
		\item For $ i \in \llb r \rrb $ uniformly, we have 
		\begin{equation} \label{eq:outlier eigenvalue}
			\left | \widetilde{\lambda}_{i} - \mathfrak{a}_{i} \right | \prec n^{-1/2}.
		\end{equation}
		\item For $ i \in \llb r+1, n \rrb $ uniformly, we have
		\begin{equation} \label{eq:non-outlier eigenvalue}
			\left | \widetilde{\lambda}_{i} - \gamma_{i-r} \right | \prec ( \min \{ i, n+1-i \} )^{-1/3} n^{-2/3}.
		\end{equation}
	\end{enumerate}
\end{thm}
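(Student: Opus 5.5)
We follow the strategy of \cite{bloemendal16,ding21}, adapted to the ultra-high-dimensional scaling. The main inputs are the anisotropic local laws for the non-spiked resolvents from Appendix \ref{app:local law}. Write $G_{0}(z) = (Q_{0}-z)^{-1}$ and $\mathcal{G}_{0}(z) = (\mathcal{Q}_{0} - z)^{-1}$, and set $z = E+\mathrm{i}\eta$. The local law then takes the form
\[
\bigl|\langle \mathbf{v}, (z G_{0}(z)) \mathbf{w}\rangle - \langle \mathbf{v}, \Pi(z)\mathbf{w}\rangle\bigr| \prec \Psi(z),
\]
where $\Pi(z)$ is the deterministic equivalent, essentially $-(1+\phi^{-1/2} m(z)\Sigma_{0})^{-1}$ up to normalization. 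This holds uniformly in spectral domains both near and away from $[\gamma_-,\gamma_+]$. We also use rigidity of the eigenvalues $\lambda_i$ of $\mathcal{Q}_0$, namely $|\lambda_i - \gamma_i| \prec \min\{i,n+1-i\}^{-1/3} n^{-2/3}$.

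First, we linearize the spike. Write $\widetilde{\Sigma}^{1/2} = (1+VDV^\top)^{1/2}\Sigma_0^{1/2}$ with $D = \mathrm{diag}(\sfd_1,\dots,\sfd_r,0,\dots)$. Since $\widetilde{Q}$ and $Q_0$ share $X$, a standard determinant identity (Sylvester / Woodbury) shows the following. A point $x\notin \operatorname{spec}(Q_0)$ is an eigenvalue of $\widetilde{Q}$ if and only if
\[
\det\Bigl( D^{-1}_{r} + \frac{D_r}{1+D_r}\,\cdot\, \bigl(1 + x\, V_r^\top G_{0}(x) V_r\bigr)\Bigr) = 0 ,
\]
or an equivalent $r\times r$ matrix equation in $V_r^\top G_0(x) V_r$, where $V_r = [\mathbf{v}_1,\dots,\mathbf{v}_r]$. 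We replace $V_r^\top x G_0(x) V_r$ by its deterministic limit $\mathrm{diag}\bigl(-(1+\phi^{-1/2}\sigma_i m(x))^{-1}\bigr)$. The $i$-th diagonal equation then reduces to $1+\phi^{-1/2}\widetilde{\sigma}_i m(x)=0$, i.e.\ $m(x) = -\phi^{1/2}\widetilde{\sigma}_i^{-1}$. By \eqref{eq:self-consistent equation} this gives $x = f(-\phi^{1/2}\widetilde{\sigma}_i^{-1}) = \mathfrak{a}_i$. Assumption \eqref{eq:spike1} guarantees that $-\phi^{1/2}\widetilde\sigma_i^{-1}\in(\mathsf{c}_1,0)$, with an order-one gap from $\mathsf c_1$. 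Hence $\mathfrak{a}_i > \gamma_+ + c$, since $f$ is increasing on $(\mathsf c_1,0)$ with $f'\asymp 1$ there.

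For part (i), we work on the real domain $x\in[\gamma_+ + c/2, C\phi^{1/2}]$, away from the spectrum of $Q_0$, which holds with high probability by rigidity. There the isotropic law holds with error $n^{-1/2}$; this is the ultra-high-dimensional analogue of the $n^{-1/2}$ bound outside the spectrum, after accounting for the $(pn)^{-1/2}$ scaling. We expand $m(x)$ near $-\phi^{1/2}\widetilde\sigma_i^{-1}$; its derivative there is of order one, using $m'=1/f'(m)$. The error $n^{-1/2}$ in the $r\times r$ matrix then moves each zero of the determinant by $O_\prec(n^{-1/2})$. The separation \eqref{eq:spike2} decouples the $r$ equations, so each $\mathfrak a_i$ carries exactly one eigenvalue. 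We would make the counting rigorous by a continuity (Rouché/interlacing) argument in $\sfd$, deforming $\sfd_i$ from $0$ while tracking that outliers cannot cross the gaps, as in \cite[Section 4]{bloemendal16}. This gives exactly $r$ eigenvalues above $\gamma_+ + c/2$, each within $n^{-1/2+\epsilon}$ of the corresponding $\mathfrak a_i$.

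For part (ii), $\widetilde{\Sigma}$ is a rank-$r$ multiplicative perturbation of $\Sigma_0$. Weyl interlacing for rank-$r$ perturbations gives $\lambda_{i} \le \widetilde\lambda_i \le \lambda_{i-r}$ for $i\ge r+1$, and $\widetilde{\lambda}_i\ge \lambda_i$ because $\widetilde\Sigma\ge\Sigma_0$. Combining this with rigidity of $\lambda_i$ yields a bound of order $|\gamma_{i-r}-\gamma_{i}|$ plus the rigidity error. By the square-root behaviour of Lemma \ref{lem:properties of rho and f}(3), this difference is $\lesssim \min\{i,n+1-i\}^{-1/3}n^{-2/3}$ for fixed $r$. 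This yields \eqref{eq:non-outlier eigenvalue}, provided we know that $\widetilde\lambda_{r+1}\le\lambda_1$, which is exactly the interlacing upper bound.

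The main obstacle is the outlier step, specifically obtaining the sharp $n^{-1/2}$ isotropic bound for $V_r^\top G_0(x)V_r$ on the real axis outside the spectrum in the regime $p\asymp n^\alpha$. There the entries of $G_0$ have a size scaled by $\phi^{1/2}$, and the spectral edge sits at distance $\phi^{1/2}$ from the origin. We would first prove the bound at $\eta = n^{-1/2}$ off the axis from the local law, and then extend it to $\eta=0$ using the spectral gap and Lipschitz continuity of $G_0$ away from the spectrum.
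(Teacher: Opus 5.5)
Your part (i) follows the paper's route: the determinant criterion of Lemma \ref{lem:perturb 1}, the local law outside the support, and Rouché. Your part (ii) is genuinely different and simpler.

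\textbf{Part (ii).} The paper proves the much stronger sticking estimate \eqref{eq:eigenvalue sticking}, $|\widetilde\lambda_i-\lambda_{i-r}|\prec n^{-1}$. It uses interlacing plus rigidity only when $\min\{i,n+1-i\}>n^{1-2\epsilon}$. Near the edges it excludes eigenvalues of $\widetilde Q$ from the set $\mathcal J_i$ using the bulk local law at $\eta_0=n^{-1+2\epsilon}$, and then defers to a Knowles--Yin counting argument. You instead observe that the rate in \eqref{eq:non-outlier eigenvalue} is already the quantile-spacing scale. The argument then closes for every $i\in\{r+1,\dots,n\}$ from three ingredients:
\begin{itemize}
\item the deterministic interlacing $\lambda_i\le\widetilde\lambda_i\le\lambda_{i-r}$, valid because $X^\top\widetilde\Sigma X-X^\top\Sigma_0X$ is positive semidefinite of rank at most $r$;
\item rigidity at both indices $i$ and $i-r$, using $(i-r)^{-1/3}\le(r+1)^{1/3}i^{-1/3}$;
\item the spacing bound $|\gamma_{i-r}-\gamma_i|\lesssim r\,(\min\{i,n+1-i\})^{-1/3}n^{-2/3}$.
\end{itemize}
This is correct. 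It needs only Lemma \ref{lem:rigidity} and $\varrho(x)\asymp\sqrt{(x-\gamma_-)(\gamma_+-x)}$ on the whole support, which is a bulk lower bound and not just edge behaviour. It avoids the real-axis bulk local law and the omitted counting step. What the paper's route buys is the sticking estimate itself, which near the edge is far sharper than the theorem ($n^{-1}$ versus $n^{-2/3}$). That is what one needs to transfer edge fluctuations from $\lambda_1$ to $\widetilde\lambda_{r+1}$; your argument says nothing at that scale.

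\textbf{Part (i): points to fix.} None of these is fatal, but some are wrong as written.
\begin{itemize}
\item Your displayed determinant is incorrect. Dividing Lemma \ref{lem:perturb 1} by $\phi^{1/2}$ gives $\det\bigl(\mathsf{D}_{\mathrm{s}}^{-1}+1+xV_{\mathrm{s}}^\top R_0(x)V_{\mathrm{s}}\bigr)=0$. It is this form, not yours, that reduces to $1+\phi^{-1/2}\widetilde\sigma_i m(x)=0$.
\item Make the scaling precise. Since $\mathsf{d}_k\asymp\phi^{1/2}$, the deterministic entries $\mathsf{d}_k^{-1}+\phi^{-1/2}m\sigma_k/(1+\phi^{-1/2}m\sigma_k)$ are of order $\phi^{-1/2}$. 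You therefore need error $\phi^{-1/2}n^{-1/2}$ for $xV_{\mathrm{s}}^\top R_0V_{\mathrm{s}}$. That is exactly Lemma \ref{lem:G local law}(ii)--(iii) for $\widetilde G=\phi^{1/2}(1+zR_0)$; a law with error $\Psi$ for $zG_0$, as you wrote it, would be useless.
\item Lemma \ref{lem:G local law}(ii)--(iii) already reaches $\eta=0$, so your last paragraph is unnecessary. If you do extend from $\eta=n^{-1/2}$ by continuity, bound $\mathbf v^\top Q_0R_0(z)^2\mathbf v=\sum_k\langle\mathbf u_k,\mathbf v\rangle^2\lambda_k/(\lambda_k-z)^2\prec\phi^{-1/2}$ via Lemma \ref{lem:delocalization}. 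The operator-norm Lipschitz bound loses a factor $\phi$.
\item The claim $f'\asymp1$ on $(\mathsf{c}_1,0)$ is false: $f'(\mathsf{c}_1)=0$ and $f'$ blows up at $0$. Argue with $f''\gtrsim1$ as in \eqref{eq:a_i separate from support}, on the range $(\mathsf{c}_1+\varpi,-\varpi)$ permitted by \eqref{eq:spike1}.
\item Do not deform $\mathsf{d}_i$ from $0$, since that passes through the critical regime excluded by \eqref{eq:spike1}. Instead combine $\widetilde\lambda_{r+1}\le\lambda_1$ from interlacing with Rouché on disks of radius $n^{-1/2+\epsilon}$ around each $\mathfrak{a}_i$. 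This identifies $\widetilde\lambda_1,\dots,\widetilde\lambda_r$ and makes the paper's exclusion argument on the rest of the real axis unnecessary.
\end{itemize}
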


Theorem \ref{thm:eigenvalue} describes the asymptotic behavior of the non-trivial eigenvalues of $ \widetilde{Q} $. More precisely, it identifies deterministic equivalents for their locations together with precise error bounds. These deterministic equivalents depend on whether the eigenvalues correspond to the spiked eigenvalues or to the remaining non-spiked eigenvalues.

For the first $ r $ eigenvalues, the deterministic equivalents are given by $ \{ \mathfrak{a}_{i} \}_{i=1}^{r} $. Under Assumption \ref{assump:main} (iv), $ \{ \mathfrak{a}_{i} \}_{i=1}^{r} $ lie outside the support of $ \varrho $ of $ Q_{0} $. In particular, they are greater than $ \gamma_{+} $ by constant-order gap; see \eqref{eq:a_i separate from support} below. Thus, \eqref{eq:outlier eigenvalue} shows that the first $ r $ eigenvalues $ \{ \widetilde{\lambda}_{i} \}_{i=1}^{r} $ emerge as outliers and concentrate around $ \{ \mathfrak{a}_{i} \}_{i=1}^{r} $ with error of order $ n^{-1/2} $.

On the other hand, the deterministic equivalents of the remaining eigenvalues are given by the quantiles of $ \varrho $. Specifically, \eqref{eq:non-outlier eigenvalue} shows that $ \widetilde{\lambda}_{i} $ is close to $ \gamma_{i-r} $ for $ i \in \llb r+1,n \rrb $. The convergence rate depends on the location of the quantile. It is of order $ n^{-2/3} $ near the edges and of order $ n^{-1} $ in the bulk. These rates match the typical spacing of the quantiles, which follows from the square-root behavior of $ \varrho $ at the edges, as stated in Lemma \ref{lem:properties of rho and f} (3).

\begin{rem}
	In the proof of \eqref{eq:non-outlier eigenvalue} below, we combine two estimates: the \emph{eigenvalue sticking} and the \emph{rigidity}. Heuristically, the eigenvalue sticking can be expressed as $ \widetilde{\lambda}_{i} \approx \lambda_{i-r} $, where $ \{\lambda_{i}\} $ are the eigenvalues of $ Q_{0} $, meaning that the non-outlier eigenvalues of $ \widetilde{Q} $ ``stick'' to the corresponding eigenvalues of $ Q_{0} $. On the other hand, the rigidity implies that each $ \lambda_{i} $ concentrates around the quantile $ \gamma_{i} $ of $ \varrho $. See \eqref{eq:eigenvalue sticking} and Lemma \ref{lem:rigidity} below for the precise statements.
\end{rem}

\begin{example} \label{ex:Johnstone eigenvalue}
	Consider Johnstone's spiked covariance matrix model, that is, $ \Sigma_{0} = I_{p} $ as in Remark \ref{rem:Johnstone}. Then, the condition \eqref{eq:spike1} in Assumption \ref{assump:main} is satisfied whenever $ \sfd_{i} > \phi^{1/2} (1 + \varpi) $. Under this case, \eqref{eq:a_i} with $ f^{\mathrm{MP}} $ in \eqref{eq:f Johnstone} gives
	\[ \mathfrak{a}_{i}^{\mathrm{Johnstone}} = \phi^{1/2} + \phi^{-1/2} + \phi^{-1/2} \sfd_{i} + \phi^{1/2} \sfd_{i}^{-1}, \]
	which recovers Theorem 2.3 in \cite{bloemendal16}.
\end{example}

\subsection{Eigenvector projections} \label{ssec:eigenvectors}
In this subsection, we present the results on the eigenvector projections. For $ i \in \llb r \rrb $, denote 
\begin{equation} \label{eq:b_i}
	\mathfrak{b}_{i} = \left ( \phi^{1/2} \widetilde{\sigma}_{i}^{-1} \right ) \frac{f' \left ( - \phi^{1/2} \widetilde{\sigma}_{i}^{-1} \right )}{f \left ( - \phi^{1/2} \widetilde{\sigma}_{i}^{-1} \right )}.
\end{equation}
Recall that $ \{ \bfv_{j} \} $ are the eigenvectors of population covariance matrix $ \widetilde{\Sigma} $.

\begin{thm} \label{thm:eigenvector}
	Suppose that Assumption \ref{assump:main} holds.  Then,
	\begin{enumerate}[label=(\roman*)]
		\item For $ i \in \llb r \rrb $ uniformly, we have
		\begin{equation} \label{eq:outlier eigenvector}
			\langle \widetilde{\bfu}_{i}, \bfv_{j} \rangle^{2} = \begin{cases}
				\delta_{ij} \mathfrak{b}_{i} + \mathrm{O}_{\prec}(p^{-1/2}), \quad &j \in \llb r \rrb, \\
				\mathrm{O}_{\prec}(p^{-1}), \quad &j \in \llb r+1, p \rrb.
			\end{cases}
		\end{equation}
		\item For $ i \in \llb r+1, n \rrb $ uniformly, we have
		\begin{equation} \label{eq:non-outlier delocalization}
			\langle \widetilde{\bfu}_{i}, \bfv_{j} \rangle^{2} \prec \begin{cases}
				(pn)^{-1/2}, \quad j \in \llb r \rrb, \\
				p^{-1}, \quad j \in \llb r+1, p \rrb
			\end{cases}
		\end{equation}
	\end{enumerate}
\end{thm}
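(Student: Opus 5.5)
We follow the resolvent strategy of \cite{bloemendal16,ding21}, with the anisotropic local law for $Q_{0}$ in the ultra-high-dimensional regime (Appendix \ref{app:local law}) as the main input.

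\textbf{Reduction to a low-rank problem.} Write $\widetilde{\Sigma}^{1/2} = \Sigma_{0}^{1/2}(I_{p} + V_{r} D V_{r}^{\top})$, where $V_{r} = [\bfv_{1},\dots,\bfv_{r}]$ and $D = \operatorname{diag}(\sqrt{1+\sfd_{i}}-1)$. Let $G(z) = (Q_{0} - z)^{-1}$ and $\widetilde{G}(z) = (\widetilde{Q}-z)^{-1}$. Since $\widetilde{Q} - Q_{0}$ has rank at most $2r$, a Woodbury identity expresses $V_{r}^{\top}\widetilde{G}(z)V_{r}$ and $\bfv_{j}^{\top}\widetilde{G}(z)\bfv_{j}$ through an $r\times r$ matrix $M(z) = D^{-1}+ \dots$ built from $V_{r}^{\top}\Sigma_{0}^{1/2}G(z)\Sigma_{0}^{1/2}V_{r}$ and related quantities. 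The isotropic local law replaces these entries by the deterministic equivalent $\Pi(z) = -z^{-1}(1 + \phi^{-1/2} m(z)\Sigma_{0})^{-1}$, up to the appropriate scaling. The error is $O_{\prec}(\Psi(z))$, and for $z$ at constant distance outside $[\gamma_{-},\gamma_{+}]$ one has $\Psi \lesssim n^{-1/2}$ in the $\phi^{-1/2}$-scaled units. With this replacement, the outlier locations are the $z$ solving $1 + \phi^{-1/2}\widetilde{\sigma}_{i} m(z) = 0$, i.e.\ $m(z) = -\phi^{1/2}\widetilde{\sigma}_{i}^{-1}$, i.e.\ $z = \mathfrak{a}_{i}$. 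This is the picture underlying Theorem \ref{thm:eigenvalue}, which we take as given.

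\textbf{Outlier eigenvectors, part (i).} Let $\Gamma_{i}$ be a contour of radius $\asymp 1$ around $\mathfrak{a}_{i}$. By Theorem \ref{thm:eigenvalue}(i) and the separation conditions \eqref{eq:spike1}--\eqref{eq:spike2}, $\Gamma_{i}$ encloses only $\widetilde{\lambda}_{i}$ with high probability. Then
\[
\langle \widetilde{\bfu}_{i},\bfv_{j}\rangle^{2} = -\frac{1}{2\pi \i}\oint_{\Gamma_{i}} \bfv_{j}^{\top}\widetilde{G}(z)\bfv_{j}\,\dif z .
\]
Insert the Woodbury expression and replace $G$ by its deterministic equivalent; the local-law error, uniform on $\Gamma_{i}$ via a net argument, contributes the stated error after integration. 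For $j\in\llb r\rrb$, the main term is $\bfv_{j}^{\top}\widetilde{\Pi}\bfv_{j}$ with $\widetilde{\Pi} = -z^{-1}(1+\phi^{-1/2}m\widetilde{\Sigma})^{-1}$. Its $(j,j)$ entry has a simple pole at $\mathfrak{a}_{j}$, so the residue vanishes unless $i=j$. When $i=j$, differentiating $z = f(m)$ at $m = -\phi^{1/2}\widetilde{\sigma}_{i}^{-1}$ gives the residue
\[
\frac{m}{z\,m'(z)}\Big|_{z=\mathfrak{a}_{i}} = \frac{(-m)f'(m)}{f(m)} = \mathfrak{b}_{i}.
\]
For $j>r$, $\bfv_{j}^{\top}\widetilde{\Pi}\bfv_{j}$ is analytic inside $\Gamma_{i}$, so only the fluctuation term survives. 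Here I expect the main obstacle: one needs the sharper bound $p^{-1}$ rather than the naive $p^{-1/2}$. The plan is to use the structure $\langle\widetilde{\bfu}_{i},\bfv_{j}\rangle = \sum_{k\le r}(\text{coefficient}_{k})\,\bfv_{j}^{\top}\Sigma_{0}^{1/2}G\Sigma_{0}^{1/2}\bfv_{k}$. The off-diagonal isotropic entries $\bfv_{j}^{\top}G\bfv_{k}$, $j\ne k$, have deterministic limit $0$ and fluctuation $\prec p^{-1/2}$ in the correct normalization, so their square is $\prec p^{-1}$. Carrying this through requires tracking the $\phi$-scalings carefully so that the relevant isotropic error is genuinely $p^{-1/2}$ and not $n^{-1/2}$. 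The same off-diagonal smallness yields $\delta_{ij}$ with error $p^{-1/2}$ for $i\neq j\le r$, and the diagonal error follows from the $n^{-1/2}$-accuracy at $\mathfrak{a}_{i}$ combined with the scaled local law.

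\textbf{Non-outliers, part (ii).} Use the spectral bound
\[
\langle\widetilde{\bfu}_{i},\bfv_{j}\rangle^{2} \le \eta\,\operatorname{Im}\bfv_{j}^{\top}\widetilde{G}(\widetilde{\lambda}_{i}+\i\eta)\bfv_{j}
\]
with $\eta = n^{-1+\epsilon}$ suitably placed near the spectral edge, justified by eigenvalue sticking and rigidity as in the proof of Theorem \ref{thm:eigenvalue}(ii). For $j>r$, the Woodbury formula together with $\operatorname{Im}\bfv_{j}^{\top}G\bfv_{j} \prec \operatorname{Im} m + \Psi$ gives $\prec p^{-1}$ after the normalization. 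For $j\le r$, the formula contains $(1+\phi^{-1/2}\widetilde{\sigma}_{j} m)^{-1}$. At a non-outlier location this factor is of order $\phi^{1/2}/\widetilde{\sigma}_{j}\asymp 1$ in the scaled units, and since $\widetilde{\sigma}_{j}\asymp\phi^{1/2}$ this produces the improved bound $(pn)^{-1/2}$. Away from the edges we argue as in \cite[Section 6]{bloemendal16}, and we need $|1+\phi^{-1/2}\widetilde{\sigma}_{j} m(z)|\gtrsim 1$ on the spectral domain, which follows from the constant-order gap in \eqref{eq:spike1}.
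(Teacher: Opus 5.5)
Your proof is correct in outline. For the spike directions in (i) it follows the paper: a contour around $\fa_i$ (the paper integrates over $f(\Gamma_i)$, i.e.\ it changes to the variable $\zeta=m(z)$, which is only a convenience), a low-rank reduction as in Lemma~\ref{lem:perturb 2}, the leading term $-[z(1+\phi^{-1/2}\widetilde\sigma_j m(z))]^{-1}$ whose only pole is at $\fa_j$, and a crude $\mathrm{O}_{\prec}(p^{-1/2})$ bound on the remainder. Your residue display has a sign slip: $-\frac{1}{2\pi \mathrm{i}}\oint$ gives $-m/(zm')=(-m)f'(m)/f(m)$.

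For $j\in\llb r+1,p\rrb$ in (i) you take a different route.
\begin{itemize}
\item \emph{The paper's route.} It adds an auxiliary spike $\delta\sigma_j\bfv_j\bfv_j^{\top}$, uses the $(r+1)$-dimensional analogue of Lemma~\ref{lem:perturb 2}, expands to third order and lets $\delta\downarrow0$. This yields the sharp identity \eqref{eq:outlier eigenvector stronger} with error $p^{-1}n^{-1/2}$, which is reused for Theorem~\ref{thm:spiked outlier eigenvector projection sum}.
\item \emph{Your route.} Writing $\langle\widetilde\bfu_i,\bfv_j\rangle$ through off-diagonal isotropic entries gives only the size $p^{-1}$. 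That is all Theorem~\ref{thm:eigenvector} needs.
\item \emph{Where the scaling actually sits.} The $\phi$-bookkeeping you defer is in the coefficients, not in the isotropic error. With $\widetilde\Sigma^{1/2}=\Sigma_0^{1/2}T$ and $w=T\widetilde\bfu_i$, the eigenvalue equation gives $\langle\bfv_j,\widetilde\bfu_i\rangle=-\widetilde\lambda_i\sum_{k\le r}\sfd_k(1+\sfd_k)^{-1/2}\langle\bfv_k,\widetilde\bfu_i\rangle\langle\bfv_j,R_0(\widetilde\lambda_i)\bfv_k\rangle$, with $\langle\bfv_j,R_0(\widetilde\lambda_i)\bfv_k\rangle\prec\phi^{-1}n^{-1/2}$. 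You reach $p^{-1}$ only by using $\langle\bfv_k,\widetilde\bfu_i\rangle^2\lesssim\phi^{-1/2}$, which comes from the spike-direction case and $\fb_i\asymp\phi^{-1/2}$. The trivial bound $|\langle\bfv_k,\widetilde\bfu_i\rangle|\le1$ gives only $(pn)^{-1/2}$.
\item \emph{A cleaner version.} Insert the Woodbury formula you already announced into the contour integral: $\bfv_j^{\top}\widetilde R\bfv_j=\bfv_j^{\top}R_0\bfv_j-\bfv_j^{\top}R_0V_{\mathrm{s}}M^{-1}V_{\mathrm{s}}^{\top}R_0\bfv_j$, with $M=z^{-1}\operatorname{diag}((1+\sfd_k)/\sfd_k)+V_{\mathrm{s}}^{\top}R_0V_{\mathrm{s}}$. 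The first term is analytic inside the contour. The second is quadratic in entries $\prec\phi^{-1}n^{-1/2}$ against $\|M^{-1}\|\lesssim\phi$. A crude bound therefore already gives $p^{-1}$, so the obstacle you anticipate does not arise.
\end{itemize}

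In (ii) the paper chooses $\eta_i$ adaptively via $\operatorname{Im} m(\widetilde\lambda_i+\mathrm{i}\eta_i)=n^{6\epsilon}/(n\eta_i)$ and uses only $|\Upsilon_{kk}|\gtrsim\phi^{-1/2}\operatorname{Im} m$. Your fixed $\eta=n^{-1+\epsilon}$ together with $|1+\phi^{-1/2}\widetilde\sigma_k m(z)|\gtrsim1$ is simpler and suffices under the constant-order gap. That lower bound does hold near the whole bulk under \eqref{eq:spike1}. The adaptive choice is what one would need for sharper or near-critical statements. For $j>r$ the same Woodbury formula supplies the argument the paper omits, provided you also bound the cross term, which is $\lesssim\eta\phi^{-1}\Psi^2\le p^{-1}n^{\epsilon}$.

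Finally, $(pn)^{-1/2}=\phi^{1/2}p^{-1}$ is weaker than $p^{-1}$, not an improvement. It reflects $\phi^{-1/2}\widetilde\sigma_j\asymp1$ in spike directions.
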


Theorem \ref{thm:eigenvector} shows that the first-order deterministic equivalents for individual eigenvector projections are available only for the outlier eigenvectors in their corresponding spike directions. More precisely, for $ i \in \llb r \rrb $, the projection $ \langle \widetilde{\bfu}_{i}, \bfv_{j} \rangle^{2} $ has the deterministic equivalent $ \mathfrak{b}_{i} $, whose order is given by $ \phi^{-1/2} $, only when $ j=i $. This means that each outlier eigenvector $ \widetilde{\bfu}_{i} $ is asymptotically localized around the spike direction $ \bfv_i $, in the sense that it concentrate on a cone with the axis parallel to $ \bfv_{i} $ with an aperture given by $ \mathfrak{b}_{i} $. For all other projections, we only obtain high-probability error bounds instead. In particular, non-outlier eigenvectors are delocalized in any direction of $ \{ \bfv_{j} \} $. The theorem also shows that the order of the bound depends on the index $ j $, according to whether $ \bfv_j $ is a spike direction or a non-spike direction.

\begin{example} \label{ex:Johnstone eigenvector}
	Similar to Example \ref{ex:Johnstone eigenvalue}, Theorem \ref{thm:eigenvector} recovers the results for Johnstone's spiked covariance matrix model given in Example 2.13 in \cite{bloemendal16} when $ \Sigma_{0} = I_{p} $. We omit the details.
\end{example}

Finally, we consider weighted sums of the projections of the non-spike directions $ \{ \bfv_{j} \}_{j=r+1}^{p} $ onto an outlier eigenvector
$ \widetilde{\bfu}_{i} $, where $ i \in \llb r \rrb $. Such quantities arise naturally in statistical applications; see Example \ref{ex:pca} below for one such example. The individual projection estimate in Theorem \ref{thm:eigenvector} is not sufficient for this purpose. Indeed, \eqref{eq:outlier eigenvector} only gives
$ | \langle \widetilde{\bfu}_{i}, \bfv_{j} \rangle |^{2} \prec p^{-1} $
for each $ j \in \llb r+1,p \rrb $. Therefore, summing this bound over
$ j $ produces an error of order one, and does not yield a deterministic equivalent for quantities of the form $ \sum_{j=r+1}^{p} \ell_{j} \langle \widetilde{\bfu}_{i}, \bfv_{j} \rangle^{2} $, where $ \{ \ell_{j} \} $ is some deterministic sequence. The following theorem provides such a deterministic equivalent directly at the level of the weighted sum. For $ i \in \llb r \rrb $ and a deterministic positive sequence $ \{ \ell_{j} \}_{j=1}^{p} $, define
\begin{equation} \label{eq:mdot}
	\dot{\sfm}_{0, \fa_{i}}(\mathfrak{a}_{i}) \equiv \dot{\sfm}_{0, \fa_{i}}(\mathfrak{a}_{i}; \ell) := \frac{\phi m'(\mathfrak{a}_{i})}{p \mathfrak{a}_{i}} \sum_{j=1}^{p} \frac{\ell_{j}/\sigma_{j}}{(\phi^{-1/2}m(\mathfrak{a}_{i}) + \sigma_{j}^{-1})^{2}},
\end{equation}
where $ m'(z) = \frac{\dif}{\dif z} m(z) $.

\begin{thm} \label{thm:spiked outlier eigenvector projection sum}
	Suppose that Assumption \ref{assump:main} holds. Let $ \{ \ell_{j} \}_{j=r+1}^{p} $ be a deterministic positive sequence such that $ \tau \le \ell_{j} \le \tau^{-1} $ for some small constant $ \tau $ and $ \ell_{1} = \dots = \ell_{r} = \ell_{r+1} $. Then, we have that
	\begin{equation}
		\left | \sum_{j=r+1}^{p} \ell_{j} \langle \widetilde{\bfu}_{i}, \bfv_{j} \rangle^{2} - \phi^{-1} \fa_{i} \fb_{i} \widetilde{\sigma}_{i} \dot{\sfm}_{0, \fa_{i}}(\fa_{i}) \right | \prec n^{-1/4}.
	\end{equation}
\end{thm}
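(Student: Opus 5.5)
The plan is to follow the perturbation argument of \cite{dly24} mentioned in Section \ref{ssec:overview}. The idea is to encode the weighted sum as a derivative of an outlier eigenvalue of a slightly perturbed spiked matrix. Set $L := \sum_{j=1}^{p} \ell_{j} \bfv_{j}\bfv_{j}^{\top}$; since $\ell_1=\dots=\ell_{r+1}$, this is a bounded positive matrix commuting with $\widetilde{\Sigma}$. For a small real parameter $t$ (we will take $|t| \le n^{-c}$), consider the deformed population covariance
\[
\widetilde{\Sigma}_{t} := \widetilde{\Sigma}^{1/2}(I + tL')\widetilde{\Sigma}^{1/2}, \qquad L' := \widetilde{\Sigma}^{-1/2} L \widetilde{\Sigma}^{-1/2},
\]
and let $\widetilde{Q}_{t} := \widetilde{\Sigma}^{1/2}XX^{\top}\widetilde{\Sigma}^{1/2} + t\,L$, or an equivalent linearization. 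By Hellmann--Feynman, for the simple eigenvalue $\widetilde{\lambda}_{i}(t)$ we have $\partial_t \widetilde{\lambda}_i(t)|_{t=0} = \langle \widetilde{\bfu}_i, L \widetilde{\bfu}_i\rangle$. This equals $\sum_{j>r}\ell_j\langle\widetilde{\bfu}_i,\bfv_j\rangle^2 + \ell_{r+1}\sum_{j\le r}\langle \widetilde{\bfu}_i,\bfv_j\rangle^2$, and by Theorem \ref{thm:eigenvector}(i) the second term is $\ell_{r+1}\fb_i + \mathrm{O}_\prec(p^{-1/2})$. Simplicity of the eigenvalue is guaranteed by \eqref{eq:spike2} and Theorem \ref{thm:eigenvalue}(i). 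Rather than differentiating, which requires control on second derivatives, I will use the finite-difference form. By convexity/concavity of eigenvalues in $t$ (min–max),
\[
\frac{\widetilde{\lambda}_i(t)-\widetilde{\lambda}_i(0)}{t} \quad\text{and}\quad \frac{\widetilde{\lambda}_i(0)-\widetilde{\lambda}_i(-t)}{t}
\]
sandwich $\langle\widetilde{\bfu}_i,L\widetilde{\bfu}_i\rangle$ up to the gap-controlled error $\mathrm{O}(t\|L\|^2/\mathrm{gap})$.

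The second step is to locate $\widetilde{\lambda}_i(\pm t)$ with precision $n^{-1/2}$. For this I redo the proof of Theorem \ref{thm:eigenvalue}(i) for the perturbed model, writing it as a finite-rank perturbation of a \emph{new} non-spiked matrix with population covariance $\Sigma_0^{1/2}(I+tL_0)\Sigma_0^{1/2}$, where $L_0 = \Sigma_0^{-1/2}L\Sigma_0^{-1/2}$ restricted suitably. Since $\ell_1=\dots=\ell_{r+1}$, the perturbation acts on the spike block as a scalar, so the spike directions and eigenbasis are unchanged. The non-spiked part still satisfies Assumption \ref{assump:main}(i)–(iii), and the spikes still satisfy (iv) for small $t$. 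Hence the anisotropic local law of Appendix \ref{app:local law} applies uniformly in $t$. This yields $|\widetilde{\lambda}_i(t)-\fa_i(t)| \prec n^{-1/2}$, where $\fa_i(t) = f_t(-\phi^{1/2}\widetilde{\sigma}_i(t)^{-1})$ is defined via the $t$-dependent $f_t$ from \eqref{eq:f} with $\sigma_j \to \sigma_j(1+t\ell_j/\sigma_j)$ and $\widetilde{\sigma}_i(t)$ the correspondingly shifted spike.

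The third step is the deterministic computation. Expand $\fa_i(t) = \fa_i + t\,\partial_t\fa_i(0) + \mathrm{O}(t^2\phi^{1/2})$, and verify that
\[
\partial_t \fa_i(0) - \ell_{r+1}\fb_i = \phi^{-1}\fa_i\fb_i\widetilde{\sigma}_i\dot{\sfm}_{0,\fa_i}(\fa_i).
\]
This comes from implicit differentiation of $z=f_t(m_t(z))$ at $z=\fa_i$, using $m(\fa_i)=-\phi^{1/2}\widetilde{\sigma}_i^{-1}$ and $m'(\fa_i)=1/f'(m(\fa_i))$. These identities produce the factor $\fb_i$ through \eqref{eq:b_i} and the sum $\sum_j \ell_j\sigma_j^{-1}(\phi^{-1/2}m+\sigma_j^{-1})^{-2}$ in \eqref{eq:mdot}. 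The computation is routine but bookkeeping-heavy.

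Combining the steps gives an error of order $n^{-1/2}/t + t\,\phi^{O(1)}$, plus the $p^{-1/2}$ term. The scaling of $\fa_i$ by $\phi^{1/2}$ must be tracked. Optimizing with $t\asymp n^{-1/4}$ (after normalizing $L$ appropriately against the $\phi^{1/2}$ scale of the spectrum) yields the stated $n^{-1/4}$ rate.

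The main obstacle is the second step. We need the outlier location estimate $n^{-1/2}$ uniformly over the perturbed family, and the perturbation must be genuinely of the form covered by the generalized model, meaning it is diagonal in the $\bfv_j$ basis with the spike block unchanged in shape. This is exactly why the hypothesis $\ell_1=\dots=\ell_{r+1}$ is imposed. I expect to need a careful check that the critical point $\sfc_1(t)$ and the edges $\gamma_\pm(t)$ move only by $\mathrm{O}(t)$, so that \eqref{eq:spike1} persists, and that the constants in the local law are uniform in $t$.
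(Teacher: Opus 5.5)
Your strategy (Hellmann--Feynman for the outlier, Theorem \ref{thm:eigenvalue}(i) for a deformed model, then a finite difference) is viable. As written, however, Steps 1 and 2 are about two different matrices, and that is a genuine gap.

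The formula $\partial_t\widetilde{\lambda}_i(0)=\langle\widetilde{\bfu}_i,L\widetilde{\bfu}_i\rangle$ holds for the additive family $\widetilde{Q}+tL$. That matrix is not of the form $\Sigma'^{1/2}XX^{\top}\Sigma'^{1/2}$ for any population covariance $\Sigma'$: it has rank $p$, whereas every such matrix has rank at most $n$. So neither Theorem \ref{thm:eigenvalue}(i) nor the local laws locate its outlier.

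Conversely, the model you analyze in Steps 2--3 has population covariance $\widetilde{\Sigma}+tL$, i.e.\ $\sigma_j\mapsto\sigma_j+t\ell_j$. Because $L$ commutes with $\widetilde{\Sigma}$, its $i$th eigenvalue has derivative $\widetilde{\lambda}_i\langle\widetilde{\bfu}_i,\widetilde{\Sigma}^{-1}L\widetilde{\bfu}_i\rangle$. The weights become $\ell_j/\widetilde{\sigma}_j$, and an extra factor $\widetilde{\lambda}_i\asymp\phi^{1/2}$ appears. Consequently the identity in Step 3 fails. For this deformation,
\[ \partial_t\fa_i(0)=\frac{\ell_i\fa_i\fb_i}{\widetilde{\sigma}_i}+\frac{\phi^{1/2}}{p}\sum_j\frac{\ell_j}{(1+\phi^{-1/2}\sigma_j m(\fa_i))^{2}}. \]
The last term is of order $\phi^{1/2}$, whereas the target $\phi^{-1}\fa_i\fb_i\widetilde{\sigma}_i\dot{\sfm}_{0,\fa_i}(\fa_i)$ is of order one. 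The phrase ``an equivalent linearization'' does not bridge this.

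The fix is to deform multiplicatively: $\widetilde{\Sigma}_t:=\widetilde{\Sigma}(I+tL)$, i.e.\ $\sigma_j\mapsto\sigma_j(1+t\ell_j)$ with the $\sfd_k$ unchanged.
\begin{itemize}
\item For small $t$ this again satisfies Assumption \ref{assump:main}. Any $L$ diagonal in $\{\bfv_j\}$ works. The condition $\ell_1=\dots=\ell_{r+1}$ only fixes the $\ell_k$, $k\le r$, inside $\dot{\sfm}_{0,\fa_i}$, where they matter at order $p^{-1}$; it is not what keeps the model in the spiked class.
\item The nonzero spectrum of $\widetilde{\Sigma}_t^{1/2}XX^{\top}\widetilde{\Sigma}_t^{1/2}$ equals that of the linear family $\widetilde{Q}+t\,\widetilde{Q}^{1/2}L\widetilde{Q}^{1/2}$. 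Hence $\partial_t\widetilde{\lambda}_i(0)=\widetilde{\lambda}_i\langle\widetilde{\bfu}_i,L\widetilde{\bfu}_i\rangle$.
\item The order-one gap bounds the second derivative by $C\|\widetilde{Q}^{1/2}L\widetilde{Q}^{1/2}\|^2\lesssim\phi$. This gap bound, not convexity, is what controls the finite difference.
\item Using $m(\fa_i)=-\phi^{1/2}\widetilde{\sigma}_i^{-1}$, $\fb_i=-m(\fa_i)f'(m(\fa_i))/\fa_i$ and $m'(\fa_i)=1/f'(m(\fa_i))$, one checks
\[ \fa_i^{-1}\partial_t\fa_i(0)=\ell_i\fb_i+\phi^{-1}\fa_i\fb_i\widetilde{\sigma}_i\dot{\sfm}_{0,\fa_i}(\fa_i). \]
\item Apply Theorem \ref{thm:eigenvalue}(i) at $t=0$ and $t=\phi^{-1/2}n^{-1/4}$, and Theorem \ref{thm:eigenvector}(i) for the terms $j\le r$. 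The errors are $n^{-1/2}/(t\fa_i)+t\phi/\fa_i\asymp n^{-1/4}$.
\end{itemize}

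Thus repaired, your argument is a genuinely different and shorter route. It needs only the outlier location for one deformed model plus the spike-block projections.

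The paper instead sums the refined expansion \eqref{eq:outlier eigenvector stronger}, which requires the auxiliary $\delta$-spike contour argument. This reduces the problem to $\langle\bfv_i,R_0(\fa_i)LR_0(\fa_i)\bfv_i\rangle$. The paper evaluates that quantity as a $t$-derivative of $\langle\bfv_i,(Q_0-\fa_i-\phi^{1/2}tL)^{-1}\bfv_i\rangle$. At the fixed spectral parameter $\fa_i$, it rewrites this as a resolvent with population covariance $(1+\phi^{1/2}tL/\fa_i)^{-1}\Sigma_0$ and controls it by the local law outside the spectrum. That is precisely the device you would need if you insisted on the additive family $\widetilde{Q}+tL$. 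In exchange, the paper's route yields the individual expansion \eqref{eq:outlier eigenvector stronger} as a by-product.
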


\begin{rem}
	For $ i \in \llb r \rrb $, the distinction between the case $ j \in \llb r \rrb $ and the case $ j \in \llb r+1, p \rrb $ for $ \langle \widetilde{\bfu}_{i}, \bfv_{j} \rangle^{2} $ is natural in the spiked model. The direction $ \bfv_{i} $ corresponds to a spike, and therefore plays a distinguished role in the behavior of the associated outlier eigenvector $ \widetilde{\bfu}_{i} $. Hence, the projection of $ \widetilde{\bfu}_{i} $ onto $ \bfv_{i} $ is individually tractable. In contrast, the directions
$ \{ \bfv_{j} \}_{j=r+1}^{p} $ are non-spiked directions. Their individual contributions are too small to be resolved separately. Nevertheless, their aggregate contribution admits a tractable deterministic equivalent, as described above.
\end{rem}

\begin{example}[Principal component analysis] \label{ex:pca}
	In this example, we illustrate how the preceding results can be used to analyze a quantity that arises in principal component analysis. For $ i \in \llb r \rrb $, $ \widetilde{\bfu}_{i}^{\top} \widetilde{\Sigma} \widetilde{\bfu}_{i} $ measures the variance explained in the direction of the $ i $th principal component. We use Theorems \ref{thm:eigenvector} and \ref{thm:spiked outlier eigenvector projection sum} to derive its deterministic equivalent.
	\begin{align*}
		\widetilde{\bfu}_{i}^{\top} \widetilde{\Sigma} \widetilde{\bfu}_{i} &= \sum_{j=1}^{p} \widetilde{\sigma}_{j} \langle \widetilde{\bfu}_{i}, \bfv_{j} \rangle^{2} \\
		&= \sum_{j=1}^{r} \widetilde{\sigma}_{j} \langle \widetilde{\bfu}_{i}, \bfv_{j} \rangle^{2} + \sum_{j=r+1}^{p} \widetilde{\sigma}_{j} \langle \widetilde{\bfu}_{i}, \bfv_{j} \rangle^{2} \\
		&= \widetilde{\sigma}_{i} \mathfrak{b}_{i} + \mathrm{O}_{\prec}(\phi^{1/2} p^{-1/2}) + \phi^{-1} \fa_{i} \fb_{i} \widetilde{\sigma}_{i} \dot{\sfm}_{0, \fa_{i}}(\fa_{i}) + \mathrm{O}_{\prec} (n^{-1/4}) \\
		&= \widetilde{\sigma}_{i} \mathfrak{b}_{i} + \phi^{-1} \fa_{i} \fb_{i} \widetilde{\sigma}_{i} \dot{\sfm}_{0, \fa_{i}}(\fa_{i}) + \mathrm{O}_{\prec} (n^{-1/4}),
	\end{align*}
	where
	\[ \dot{\sfm}_{0, \fa_{i}}(\mathfrak{a}_{i}) = \frac{\phi m'(\mathfrak{a}_{i})}{p \mathfrak{a}_{i}} \sum_{j=1}^{p} \frac{\sigma_{j}/\sigma_{j}}{(\phi^{-1/2}m(\mathfrak{a}_{i}) + \sigma_{j}^{-1})^{2}} = \frac{\phi m'(\mathfrak{a}_{i})}{p \mathfrak{a}_{i}} \sum_{j=1}^{p} \frac{1}{(\phi^{-1/2}m(\mathfrak{a}_{i}) + \sigma_{j}^{-1})^{2}}. \]
\end{example}


\appendix

\section{Local laws} \label{app:local law}
In this section, we introduce the local laws, which will be the essential tools for deriving all the theoretical results in this paper. We start with the basic estimates of $ m $. Recall the notations $ \gamma_{\pm} $ and $ \mathfrak{m}_{1} $ defined in Lemma \ref{lem:properties of rho and f} and \eqref{eq:m_1}, respectively. For $ z = E + \i \eta \in \bbC $, let
\begin{equation} \label{eq:kappa}
	\kappa \equiv \kappa(z) := \operatorname{min} \{ |E - \gamma_{+}|, |E - \gamma_{-}| \}.
\end{equation}

\begin{lem}[Basic estimates of $ m $] \label{lem:basic estimates of m}
	Suppose that Assumption \ref{assump:main} (i)-(iii) holds. Let $ \tau \in (0, 1) $ be a fixed small constant. Then, for $ z \in \left \{E+ \i \eta: |E - \phi^{1/2} \mathfrak{m}_{1} | \le \tau^{-1}, \ 0 < \eta \le \tau^{-1} \right \} $,
	\begin{equation} \label{eq:estimate of m}
		|m(z)| \asymp 1
	\end{equation}
	and
	\begin{equation} \label{eq:estimate of im m}
		\im m(z) \asymp \begin{cases}
			\sqrt{\kappa + \eta}, & E \in [\gamma_{-}, \gamma_{+}], \\
			\frac{\eta}{\sqrt{\kappa + \eta}}, & E \notin [\gamma_{-}, \gamma_{+}].
		\end{cases}
	\end{equation}
\end{lem}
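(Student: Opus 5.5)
The estimates follow from the self-consistent equation \eqref{eq:self-consistent equation} together with Lemma \ref{lem:properties of rho and f}, with the local analysis carried out separately near the two edges and away from them. The first step is a change of variables. The spectrum $[\gamma_-,\gamma_+]$ is centred at $\phi^{1/2}\mathfrak m_1$ and has width $\mathrm O(1)$. Expanding the sum in \eqref{eq:f} in powers of $\phi^{-1/2}x$ gives
\[
f(x)=\phi^{1/2}\mathfrak m_1-\frac1x-\mathfrak m_2\,x+\mathrm O(\phi^{-1/2}|x|^2),\qquad \mathfrak m_2:=\int x^2\pi_0(\dif x)\asymp 1,
\]
valid uniformly for $|x|\le C$, since $\phi^{-1/2}\sigma_i|x|\ll 1$ by \eqref{eq:uhd} and \eqref{eq:spectrum of Sigma0}. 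So on the relevant domain, $w:=z-\phi^{1/2}\mathfrak m_1$ satisfies $w=g(m)+\mathrm O(\phi^{-1/2})$ with $g(x)=-1/x-\mathfrak m_2x$, a Marchenko--Pastur/semicircle-type equation with bounded coefficients. This reduction is what makes all constants uniform in $n$ even though the spectrum itself sits at distance $\phi^{1/2}\to\infty$.

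Next, the bound $|m|\asymp1$. For the upper bound, $m$ is the Stieltjes transform of the probability measure $\varrho$ on $\bbR$. I would first show $|m|\le C$ for $\eta$ of order one via the crude bound $|m|\le\eta^{-1}$. I would then extend it down to small $\eta$ by a continuity argument in $\eta$. The key input is that the only solutions $x$ of $f(x)=z$ with $|x|$ large have $|f(x)|$ far from the window $|E-\phi^{1/2}\mathfrak m_1|\le\tau^{-1}$, because $|f(x)-\phi^{1/2}\mathfrak m_1|\ge \mathfrak m_2|x|/2$ when $|x|$ is large but $\phi^{-1/2}|x|\le c$. Poles near $x=-\phi^{1/2}\sigma_i^{-1}$ give values of $f$ near $\pm\infty$ relative to the window. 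These could in principle hit the window, which is why I would select the branch $m\in\bbC^+$ continuously from $\eta=\tau^{-1}$. For the lower bound, $|m|\ge c$ follows because $z=f(m)$ with $|z|\asymp\phi^{1/2}$ forces $|1/m|\lesssim \phi^{1/2}$ only. The correct lower bound instead comes from $|w|\le C$ and $w\approx -1/m-\mathfrak m_2 m$, which forces $|m|$ to be bounded below. I expect this branch-selection and uniform-boundedness step to be the main obstacle, since one must exclude the spurious roots near the poles of $f$.

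With $|m|\asymp1$ in hand, I would Taylor expand $f$ around the critical point $\mathsf c_1$ (respectively $\mathsf c_2$). Since $f''(\mathsf c_k)\asymp1$ (from $g''$ plus an $\mathrm O(\phi^{-1/2})$ error), we get
\[
z-\gamma_\pm=\tfrac12 f''(\mathsf c_k)(m-\mathsf c_k)^2\bigl(1+\mathrm O(|m-\mathsf c_k|)\bigr),
\]
so $m-\mathsf c_k\asymp\sqrt{z-\gamma_\pm}$ with the branch fixed by $\im m>0$. Writing $z-\gamma_+=-\kappa+\i\eta$ inside the bulk and $\kappa+\i\eta$ outside, the imaginary part of the square root gives $\sqrt{\kappa+\eta}$ and $\eta/\sqrt{\kappa+\eta}$ respectively, which is \eqref{eq:estimate of im m} near the edges. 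Away from both edges (and away from the real spectrum), $f'(m)$ is bounded away from zero, so $m$ is a smooth function of $z$. There $\im m\asymp1$ inside the bulk, consistent with the positive density of $\varrho$, and outside the bulk $\im m\asymp\eta$, since $m$ is real-valued on the real axis off the support with $m'\asymp 1$. This matches both formulas because $\kappa\asymp 1$ in that region. Alternatively, the whole estimate can be read off from the square-root behaviour in Lemma \ref{lem:properties of rho and f}(3) via $\im m=\int \eta\,\varrho(\dif x)/|x-z|^2$, which gives a cross-check of the computation.
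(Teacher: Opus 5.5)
Your route differs from the paper's. The paper simply cites \cite[Lemma 5.1]{ding23}, noting that only an upper bound on $\eta$ is needed there, and reads \eqref{eq:estimate of im m} off the square-root behaviour of $\varrho$ through $\im m(z)=\int\eta\,\varrho(\dif x)/|x-z|^{2}$. You instead analyse the self-consistent equation directly. Your reduction to $w=-1/m-\mathfrak{m}_{2}m+\mathrm{O}(\phi^{-1/2})$ is sound, and so is your continuity argument for $|m|\asymp 1$. The branch issue you flag is actually absent: $\im f(x)=\im x\,\bigl[|x|^{-2}-p^{-1}\sum_{i}\sigma_{i}^{2}|1+\phi^{-1/2}\sigma_{i}x|^{-2}\bigr]<0$ whenever $\im x>0$ and $|x|>2\varsigma^{-1}$ (for large $n$), so $\im f(m)=\eta>0$ forces $|m|\le 2\varsigma^{-1}$ directly.

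\textbf{The gap: the edge estimate outside the support.} Inverting $z-\gamma_{+}=\tfrac12 f''(\mathsf{c}_{1})(m-\mathsf{c}_{1})^{2}(1+\mathrm{O}(|m-\mathsf{c}_{1}|))$ gives $m-\mathsf{c}_{1}=\sqrt{2(z-\gamma_{+})/f''(\mathsf{c}_{1})}\,(1+\mathrm{O}(\sqrt{\kappa+\eta}))$. The relative error here is \emph{complex}, so you only get $\im m=\im\sqrt{2(z-\gamma_{+})/f''(\mathsf{c}_{1})}+\mathrm{O}(\kappa+\eta)$. For $E>\gamma_{+}$ the main term is $\asymp\eta/\sqrt{\kappa+\eta}$, and it is swamped by the error once $\eta\ll\kappa^{3/2}$. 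The relation ``$m-\mathsf{c}_{1}\asymp\sqrt{z-\gamma_{+}}$'' compares moduli and cannot be fed into $\im$. That regime is exactly the one needed, for example, in the proof of Lemma~\ref{lem:local law outside}. Inside the bulk there is no problem, since $\im\sqrt{-\kappa+\i\eta}\asymp|\sqrt{-\kappa+\i\eta}|$. The same holds outside when $\eta\gtrsim\kappa$.

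\textbf{How to close it.} The missing ingredient is that $f$ is real on $\mathbb{R}$. For $E>\gamma_{+}$ and $\eta\le\kappa$, positivity of $\varrho$ gives $\im m(E+\i\eta)\asymp\eta\int\varrho(\dif x)/(x-E)^{2}=\eta\,m'(E)$. On the real axis $m'(E)=1/f'(m(E))$ with $m(E)\in(\mathsf{c}_{1},0)$. There a \emph{real} Taylor expansion gives $f'(m(E))\asymp m(E)-\mathsf{c}_{1}\asymp\sqrt{\kappa}$, which yields $\im m\asymp\eta/\sqrt{\kappa}$.

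Equivalently, write the local inverse as $m=\mathsf{c}_{1}+F(\sqrt{z-\gamma_{+}})$ with $F$ real-analytic and $F'(0)>0$. Then $\im F(a+\i b)=b\,(F'(a)+\mathrm{O}(b^{2}))$, so $\im m\asymp\im\sqrt{z-\gamma_{+}}$.

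The Stieltjes-integral ``cross-check'' you mention would also close the gap, and it is the paper's argument. However, it needs $\varrho(x)\asymp\sqrt{(\gamma_{+}-x)(x-\gamma_{-})}$ on all of $[\gamma_{-},\gamma_{+}]$, not just the edge statement in Lemma~\ref{lem:properties of rho and f}(3).

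Finally, your claim $\im m\asymp 1$ in the bulk away from the edges is asserted rather than derived. It follows by comparing $m$ with the root in $\bbC_{+}$ of $\mathfrak{m}_{2}x^{2}+wx+1=0$, using $|f'(m)|\gtrsim 1$ there for stability.
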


\begin{proof}
	See \cite[Lemma 5.1]{ding23}. Although the statement there is formulated for $ n^{-1 + \tau} \le \eta \le \tau^{-1} $, it only requires that $ \eta $ is bounded above. In particular, \eqref{eq:estimate of im m} can be shown from the square-root behavior of $ \varrho $ and the boundedness of $ \operatorname{supp} \varrho $.
\end{proof}

For $ z = E + \i \eta \in \bbC_{+} $, define the resolvent
\begin{equation} \label{eq:R_0}
	R_{0}(z) := (Q_{0} - z)^{-1},
\end{equation}
where $ Q_{0} $ is the sample covariance matrix defined in \eqref{eq:sample covmat} under the non-spiked covariance matrix model. Informally, the local laws state that the resolvent $ R_{0}(z) $ of $ Q_{0} $ is \textit{close} to its (deterministic) counterpart $ \Pi_{R_{0}}(z) $, which is defined by
\begin{equation} \label{eq:counterpart of R_0}
	\Pi_{R_{0}}(z) \equiv \Pi_{R_{0}}(z, \Sigma_{0}) := \frac{-1}{z \left (1 + \phi^{-1/2} m(z) \Sigma_{0} \right )}.
\end{equation}
For a fixed small constant $ \tau \in (0, 1) $, define the spectral domain by
\begin{equation} \label{eq:spectral domain}
	\bfD \equiv \bfD(\tau) := \left \{ z = E + \i \eta \in \bbC: |E - \phi^{1/2} \mathfrak{m}_{1} | \le \tau^{-1}, \ n^{-1 + \tau} \le \eta \le \tau^{-1} \right \}.
\end{equation}
\begin{lem}[Local laws] \label{lem:local law}
	Suppose that Assumption \ref{assump:main} (i)-(iii) holds. Then, for all $ z = E + \i \eta \in \bfD $ uniformly and any deterministic unit vectors $ \bm{\zeta}_{1}, \bm{\zeta}_{2} \in \bbR^{p} $, we have that
	\begin{equation}
		\left | \bm{\zeta}_{1}^{\top} R_{0}(z) \bm{\zeta}_{2} - \bm{\zeta}_{1}^{\top} \Pi_{R_{0}}(z) \bm{\zeta}_{2} \right | \prec \phi^{-1} \Psi(z), \label{eq:local law}
	\end{equation}
	where the error control parameter $ \Psi(z) $ is defined as
	\begin{equation} \label{eq:Psi}
		\Psi(z) := \sqrt{\frac{\im m(z)}{n \eta}} + \frac{1}{n \eta}.
	\end{equation}
\end{lem}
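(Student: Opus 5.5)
The plan is to deduce the estimate from the anisotropic local law for the linearized $(p+n)\times(p+n)$ resolvent established in \cite{ding23} for the ultra-high-dimensional scaling (the same work that gives the self-consistent equation \eqref{eq:self-consistent equation}). We will not re-derive a local law from scratch. The work is in transferring that law to the $p\times p$ resolvent $R_0(z)$ and tracking the powers of $\phi$ correctly, since $z\asymp\phi^{1/2}$ on $\bfD$ by Lemma \ref{lem:properties of rho and f}.

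\emph{Step 1 (algebraic reduction).} Write $\mathcal{G}(z):=(X^\top\Sigma_0X-z)^{-1}$ for the companion resolvent. The Woodbury/Schur identity gives
\[ R_0(z) = -z^{-1}\bigl(I_p - \Sigma_0^{1/2}X\,(X^\top\Sigma_0 X - z)^{-1}X^\top\Sigma_0^{1/2}\bigr) = -z^{-1}I_p - z^{-1}\Sigma_0^{1/2}X\mathcal{G}(z)X^\top\Sigma_0^{1/2}. \]
Equivalently, $R_0$ is (up to a factor) the upper-left block of the linearization
\[ H(z)=\begin{pmatrix} -\Sigma_0^{-1} & X\\ X^\top & -z\end{pmatrix}, \]
conjugated by $\Sigma_0^{-1/2}$. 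Since $\Sigma_0^{\pm1/2}$ has bounded norm by \eqref{eq:spectrum of Sigma0}, the maps $\bm\zeta\mapsto\Sigma_0^{1/2}\bm\zeta/\|\Sigma_0^{1/2}\bm\zeta\|$ send deterministic unit vectors to deterministic unit vectors. Hence it suffices to prove an anisotropic law for the relevant block of $H(z)^{-1}$ against deterministic unit vectors.

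\emph{Step 2 (input and identification of the limit).} Invoke the anisotropic local law for $H^{-1}$ from \cite{ding23}. It controls $\langle \bfx, (H^{-1}-\Pi)\bfy\rangle$ by $\Psi(z)$, times the appropriate $\phi$-scaling of each block, uniformly on $\bfD$. There $\Pi$ has upper-left block $-\Sigma_0(1+\phi^{-1/2}m\Sigma_0)^{-1}$ up to normalization, and lower-right block $m$ (times a constant). Substituting the upper-left deterministic block back through the identity of Step 1 should reproduce exactly $\Pi_{R_0}(z)=-z^{-1}(1+\phi^{-1/2}m\Sigma_0)^{-1}$. This is an algebraic check using $z=f(m)$ from \eqref{eq:self-consistent equation}--\eqref{eq:f}, together with $|m|\asymp1$ from Lemma \ref{lem:basic estimates of m}.

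\emph{Step 3 (powers of $\phi$), which I expect to be the main obstacle.} The leading term $-z^{-1}$ is of size $\phi^{-1/2}$. The random correction $z^{-1}\Sigma_0^{1/2}X\mathcal{G}X^\top\Sigma_0^{1/2}$ has deterministic part of order $\phi^{-1/2}\cdot\phi^{-1/2}|m|\asymp\phi^{-1}$, because $(1+\phi^{-1/2}m\Sigma_0)^{-1}=I-\phi^{-1/2}m\Sigma_0+\mathrm{O}(\phi^{-1})$. Its fluctuation must therefore be of size $\phi^{-1}\Psi$. I will check that the off-diagonal and upper-left blocks of the linearized local law in \cite{ding23}, under the scaling $\bbE x_{ij}^2=(pn)^{-1/2}$, carry an error $\phi^{-1/2}\Psi$. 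Multiplying by $|z|^{-1}\asymp\phi^{-1/2}$ then gives \eqref{eq:local law}.

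If the cited result is only stated for the companion/averaged quantities, I will fall back on proving the entrywise law for $\bfu^\top X\mathcal{G}X^\top\bfw$ directly. This would use the standard resolvent expansion in the columns of $X$ (Schur complement in the $n$ index), large deviation bounds for quadratic forms $\bfx_k^\top A\bfx_k$ with the $(pn)^{-1/2}$ variance, and a continuity argument in $\eta$ down to $n^{-1+\tau}$, with $\Psi$ as the control parameter. The large deviation step is where the extra $\phi^{-1/2}$ must be extracted carefully, from $\|\Sigma_0^{1/2}\bm\zeta\|\lesssim1$ against a column variance of total size $p\cdot(pn)^{-1/2}=\phi^{1/2}$. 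Finally, uniformity over $z\in\bfD$ follows by a standard lattice-plus-Lipschitz argument, since $\|R_0\|\le\eta^{-1}\le n$.
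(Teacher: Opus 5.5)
Your proposal takes the same route as the paper, whose proof only invokes the local laws already established in \cite{ding23, ding24}. Your Steps 1--3 correctly unpack that citation: writing $R_0 = z^{-1}\Sigma_0^{-1/2}G_{11}\Sigma_0^{-1/2}$ for the upper-left block $G_{11}$ of $H^{-1}$, recovering $\Pi_{R_0}$, and noting that a $\phi^{-1/2}\Psi$ block error becomes $\phi^{-1}\Psi$ after multiplying by $|z|^{-1}\asymp\phi^{-1/2}$. There is one harmless sign slip in Step 1: expanding your (correct) first expression gives $R_0 = -z^{-1}I_p + z^{-1}\Sigma_0^{1/2}X\mathcal{G}(z)X^\top\Sigma_0^{1/2}$.
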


\begin{proof}
	\eqref{eq:local law} has been essentially proved in \cite{ding23, ding24}.
\end{proof}

\begin{rem} \label{rem:simultaneous bound}
	By using the Lipschitz continuity and considering $ n^{-(\alpha+3)} $-net in $ \mathbf{D} $, where $ \alpha $ is defined in \eqref{eq:uhd}, the uniform bound \eqref{eq:local law} in Lemma \ref{lem:local law} can be strengthened to the simultaneous bound for $ z \in \bfD $. This refinement applies to all the local laws stated below. See Remark 2.7 in \cite{BGK17} for details.
\end{rem}

We now introduce two important consequences of Lemma \ref{lem:local law} on the eigenvalues and eigenvectors of $ Q_{0} $. Let $ \lambda_{1} \le \dots \le \lambda_{n} $ be non-trivial eigenvalues of $ Q_{0} $ and $ \{ \bfu_{i} \} $ be the normalized eigenvectors associated with $ \lambda $. The first result is an \emph{eigenvalue rigidity}, which asserts that individual eigenvalues are concentrated around the quantiles $ \{\gamma_{i}\} $ in \eqref{eq:gamma_i}.

\begin{lem}[Rigidity] \label{lem:rigidity}
	Suppose Assumption \ref{assump:main} (i)-(iii) holds. Then,
	\begin{equation} \label{eq:rigidity}
		| \lambda_{i} - \gamma_{i} | \prec \left ( \min \{ i, n+1-i \} \right )^{-1/3} n^{-2/3}
	\end{equation}
	uniformly for $ i \in \llb n \rrb $.
\end{lem}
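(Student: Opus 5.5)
The rigidity estimate \eqref{eq:rigidity} will follow from the averaged local law, by the standard route of \cite{eky13,ky17,ding23}. The plan has three stages: (a) an averaged law for $m_n(z):=n^{-1}\operatorname{tr}(\cQ_0-z)^{-1}$, the normalized trace of the companion resolvent; (b) the absence of eigenvalues outside the spectrum; (c) counting-function estimates inside the bulk.

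\textbf{Step 1: averaged local law.} We first derive, from Lemma \ref{lem:local law}, the averaged law $|m_n(z)-m(z)|\prec (n\eta)^{-1}$ uniformly on $\bfD$. The companion resolvent and $R_0$ are linked by the identity $\operatorname{tr}(\cQ_0-z)^{-1}=\operatorname{tr}R_0(z)+(p-n)/z$. The entrywise bound \eqref{eq:local law} alone only gives an error $\phi^{-1}\Psi$ per entry, which is too weak after summing over $p$ entries. So we instead use a fluctuation averaging argument, as in \cite{ding23}. Its outcome is that the self-consistent equation $z=f(m_n)+\mathrm{O}_\prec((n\eta)^{-1})$ holds, and the stability of \eqref{eq:self-consistent equation} near the edges (from the square-root behavior in Lemma \ref{lem:properties of rho and f}(3)) then upgrades this to the stated averaged bound. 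Outside the spectrum, with $E\notin[\gamma_-,\gamma_+]$, the improved bound $\frac{1}{n(\kappa+\eta)}+\frac{1}{(n\eta)^2\sqrt{\kappa+\eta}}$ is available. I expect this step to be the main obstacle. If the cited works already provide the averaged law, this step reduces to quoting it, and I would check that the scaling $(pn)^{-1/2}$ used in \eqref{eq:centered and scaling} matches theirs.

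\textbf{Step 2: no eigenvalues outside the spectrum.} Next we show that $\lambda_{\max}\le\gamma_++n^{-2/3+\epsilon}$ and $\lambda_{\min}\ge\gamma_- -n^{-2/3+\epsilon}$ with high probability. Take $E=\gamma_++\kappa$ with $\kappa\ge n^{-2/3+\epsilon}$ and $\eta=n^{-1/2}\kappa^{-1/4}$. If there were an eigenvalue within distance $\eta$ of $E$, then $\im m_n\ge (n\eta)^{-1}$. On the other hand, \eqref{eq:estimate of im m} gives $\im m\asymp\eta/\sqrt{\kappa}\ll(n\eta)^{-1}$, and the improved outside bound from Step 1 is also $\ll(n\eta)^{-1}$; this is a contradiction. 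To handle large $E$, we combine this with the crude norm bound $\|Q_0\|\le C\phi^{1/2}$, which follows from the moment assumption \eqref{eq:moments}. Both the crude bound and the $E$-range of $\bfD$ are centered at $\phi^{1/2}\mathfrak m_1$.

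\textbf{Step 3: counting function via Helffer–Sjöstrand.} Inside the bulk we pass from Step 1 to the counting function. Applying the Helffer–Sjöstrand formula to a smoothed indicator of $[E,\infty)$, exactly as in \cite[Lemma 4.1]{eky13} or \cite{pillai14}, we obtain
\[
\Big|\#\{i:\lambda_i\ge E\}-n\int_E^\infty\varrho(\dif x)\Big|\prec 1
\]
uniformly in $E$. Inverting this counting estimate with the square-root density near the edges then gives \eqref{eq:rigidity}. Indeed, near the upper edge the quantiles satisfy $\gamma_+-\gamma_i\asymp (i/n)^{2/3}$, so an $\mathrm{O}_\prec(1)$ error in the count translates into $\mathrm{O}_\prec(i^{-1/3}n^{-2/3})$ in location. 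The lower edge is handled symmetrically, and in the bulk the density is of order one, giving a location error of order $n^{-1}$.
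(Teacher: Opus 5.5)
The paper gives no argument for this lemma; it imports it from \cite[Lemma A.2]{ding24}. Your outline (averaged law via fluctuation averaging, exclusion of eigenvalues beyond the edges, Helffer--Sj\"ostrand counting, inversion via the square-root density) is the standard route. Steps 1 and 3 are fine modulo the inputs you intend to quote, but Step 2 does not work as written, and Step 3 depends on it.

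First, a minor error. With $\eta=n^{-1/2}\kappa^{-1/4}$ you get $\eta/\sqrt{\kappa}=\kappa^{-1}(n\eta)^{-1}$. For $\kappa<n^{-2/5}$ even $\eta>\kappa$, so $\im m\asymp\sqrt{\eta}$. In either case $\im m\gg(n\eta)^{-1}$ whenever $\kappa\ll 1$, and there is no contradiction. The exponent should be $+1/4$, with an extra small power: take $\eta=n^{-1/2-\delta}\kappa^{1/4}$ with $0<\delta<3\epsilon/4$. Then $\eta/\sqrt{\kappa}$, $(n\kappa)^{-1}$ and $(n\eta)^{-2}\kappa^{-1/2}$ are all smaller than $(n\eta)^{-1}$ by a power of $n$.

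The substantive gap is the treatment of $E$ far from $[\gamma_-,\gamma_+]$. Because $\phi\asymp n^{\alpha-1}\to\infty$, the bound $\|Q_0\|\le C\phi^{1/2}$ is accurate only up to a constant factor, not up to $\mathrm{O}(1)$. It leaves two intervals, $(\gamma_++\tau^{-1},C\phi^{1/2}]$ and $[0,\gamma_--\tau^{-1})$, each of length $\asymp\phi^{1/2}$. On these, neither the crude bound nor any local law on $\bfD$ says anything, since the $E$-range of $\bfD$ is a window of fixed radius $\tau^{-1}$ about $\phi^{1/2}\mathfrak{m}_1$. Your remark that both are centered at $\phi^{1/2}\mathfrak{m}_1$ is not true of the crude bound, and the lower side is not treated at all.

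Such eigenvalues also cannot be detected from inside the window. One at distance $d\ge\tau^{-1}$ from $E$ contributes at most $\eta/(nd^{2})\le(n\eta)^{-1}$ to $\im m_n(E+\mathrm{i}\eta)$, which is below the resolution of the averaged law. Consequently $\lambda_{\max}\le\gamma_++n^{-2/3+\epsilon}$ and $\lambda_{\min}\ge\gamma_--n^{-2/3+\epsilon}$ are not established. The Helffer--Sj\"ostrand counting of Step 3 only sees eigenvalues in the window, so it cannot be anchored at the top or bottom of the spectrum.

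The missing ingredient is an a priori two-sided confinement at scale $\mathrm{O}(1)$: with high probability $\|X^{\top}\Sigma_0X-\phi^{1/2}\mathfrak{m}_1I_n\|\le C$ for a fixed $C$. After that you choose $\tau^{-1}>C+1$. This is a genuinely ultra-high-dimensional estimate: the centered companion matrix is of Wigner type, with entries such as $\sum_k\sigma_kx_{ki}x_{kj}$ of size $n^{-1/2}$. It must be proved separately, e.g.\ by a high-moment bound on $\operatorname{tr}(X^{\top}\Sigma_0X-\phi^{1/2}\mathfrak{m}_1I_n)^{2k}$ with $k\asymp\log n$, or imported from the cited works together with the averaged law.
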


\begin{proof}
	See \cite[Lemma A.2]{ding24}.
\end{proof}

The second result is an \emph{eigenvector delocalization}. It describes that eigenvectors are typically spread out, with no significant mass concentrated in any direction.

\begin{lem}[Delocalization] \label{lem:delocalization}
	Suppose Assumption \ref{assump:main} (i)-(iii) holds. Then, for any deterministic unit vector $ \bfv \in \bbR^{p} $,
	\begin{equation} \label{eq:delocalization}
		\langle \bfu_{i}, \bfv \rangle^{2} \prec p^{-1}
	\end{equation}
	uniformly for $ i \in \llb n \rrb $.
\end{lem}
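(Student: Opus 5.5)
The delocalization bound will follow from the isotropic local law (Lemma \ref{lem:local law}) evaluated at a spectral parameter sitting on top of the eigenvalue $\lambda_i$, combined with the spectral decomposition of $\im R_0$. Fix $i \in \llb n \rrb$ and a deterministic unit vector $\bfv$. The resolvent satisfies
\[ \im \bfv^{\top} R_{0}(z) \bfv = \sum_{k} \frac{\eta \langle \bfu_{k}, \bfv \rangle^{2}}{(\lambda_{k}-E)^{2}+\eta^{2}} \ge \frac{\langle \bfu_{i}, \bfv\rangle^{2}}{\eta} \]
for $z = \lambda_i + \i \eta$. Hence $\langle \bfu_i,\bfv\rangle^2 \le \eta\, \im \bfv^{\top} R_0(z)\bfv$. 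Since $z$ is random, I will use the simultaneous version of the local law over $\bfD$ (Remark \ref{rem:simultaneous bound}). I also need $\lambda_i$ to lie in the $E$-range of $\bfD$; this holds with high probability by Lemma \ref{lem:rigidity} together with Lemma \ref{lem:properties of rho and f}(2), which place $\gamma_i \in [\gamma_-,\gamma_+]$ within $\mathrm{O}(1)$ of $\phi^{1/2}\mathfrak{m}_1$.

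Next I bound $\im \bfv^{\top}R_0(z)\bfv$ on the event where the local law holds. The deterministic part is
\[ \bfv^{\top}\Pi_{R_0}(z)\bfv = -\sum_j \frac{\langle \bfv,\bfv_j\rangle^2}{z(1+\phi^{-1/2}m(z)\sigma_j)}. \]
Because $|z|\asymp \phi^{1/2}$, $|m|\asymp 1$ (Lemma \ref{lem:basic estimates of m}) and $\phi^{-1/2}\sigma_j = \mathrm{o}(1)$, each denominator has modulus $\asymp \phi^{1/2}$. Therefore $|\bfv^{\top}\Pi_{R_0}\bfv| \lesssim \phi^{-1/2}$.

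The error term is $\phi^{-1}\Psi(z)$. Choosing $\eta = n^{-1+\epsilon}$ and using $\im m \lesssim 1$ gives $\Psi \le n^{-\epsilon/2}+n^{-\epsilon}\lesssim 1$. Altogether,
\[ \langle \bfu_i,\bfv\rangle^2 \prec n^{-1+\epsilon}\big(\phi^{-1/2} + \phi^{-1}\big) \asymp n^{-1+\epsilon}\phi^{-1/2} = n^{\epsilon}(pn)^{-1/2}. \]

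This gives only $(pn)^{-1/2}$, not $p^{-1}$, and closing that gap is the main obstacle. The resolution should be the normalization of $\bfu_i$: presumably the paper's $\bfu_i$ are the eigenvectors of $Q_0$, whose nonzero eigenvalues are of size $\phi^{1/2}$. The sharper $p^{-1}$ rate must come from a finer estimate of $\im\bfv^\top\Pi_{R_0}\bfv$. Expanding $1/(1+\phi^{-1/2}m\sigma_j) = 1 - \phi^{-1/2}m\sigma_j + \dots$, the leading term $-1/z$ has imaginary part $\eta/|z|^2 \sim \eta\phi^{-1}$. The next term contributes $\im(m/z)\,\phi^{-1/2}\sim \phi^{-1}\im m$. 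So $\im \bfv^\top\Pi\bfv \lesssim \phi^{-1}(\im m + \eta)\lesssim \phi^{-1}$, and the total becomes $\eta\cdot\phi^{-1}(1+\Psi)\prec n^{-1}\phi^{-1} = p^{-1}$.

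The key step is therefore to take imaginary parts before bounding, exploiting the fact that the real leading term $-1/z$ of $\Pi_{R_0}$ carries almost no imaginary part. I will verify uniformity in $i$ and $\bfv$ via the simultaneous local law and a union bound over $i\in\llb n\rrb$.
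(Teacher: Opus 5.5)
Your proposal is correct and follows the paper's proof. The paper likewise sets $z=\lambda_i+\i\eta$ with $\eta=n^{-1+\tau}$, uses rigidity and the simultaneous local law to handle the random $z$, and applies $\langle\bfu_i,\bfv\rangle^2\le\eta\,\im\bfv^{\top}R_0(z)\bfv$ together with $\im\bfv^{\top}\Pi_{R_0}(z)\bfv+\phi^{-1}\Psi(z)\lesssim\phi^{-1}$. Your imaginary-part computation is precisely the justification the paper leaves to a one-line appeal to Lemma \ref{lem:basic estimates of m}, and it supersedes your intermediate crude bound of order $(pn)^{-1/2}$.
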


\begin{proof}
	The proof is similar to that of \cite[Theorem 3.13]{bloemendal14}. From \eqref{eq:properties of gamma pm}, we can find a sufficiently small $ \tau > 0 $ such that $ | \gamma_{\pm} - \phi^{1/2} \mathfrak{m}_{1} | \le (2\tau)^{-1} $. Set $ z = \lambda_{i} + \i \eta $, where $ \eta = n^{-1 + \tau} $. From Lemma \ref{lem:rigidity} and that $ \gamma_{i} \in [\gamma_{-}, \gamma_{+}] $, there exists a high probability event $ \Xi $ on which $ z \in \bfD $, where $ \bfD $ is defined in \eqref{eq:spectral domain}. By Lemma \ref{lem:local law}, we have
	\[ \bbone(\Xi) \im \left ( \bfv^{\top} R_{0}(z) \bfv \right ) \prec \im \left ( \bfv^{\top} \Pi_{R_{0}}(z) \bfv \right ) + \phi^{-1} \Psi(z). \]
	Here, note that we can still apply Lemma \ref{lem:local law} even though $ z $ is a random variable due to Remark \ref{rem:simultaneous bound}.
	From \eqref{eq:counterpart of R_0}, Lemma \ref{lem:basic estimates of m}, and that  \eqref{eq:properties of gamma pm}, we get
	\[ \im \left ( \bfv^{\top} \Pi_{R_{0}}(z) \bfv \right ) + \phi^{-1} \Psi(z) \le C\phi^{-1} \quad \text{on } \Xi, \]
	where $ C > 0 $ is a generic constant. Therefore,
	\[ \bbone(\Xi) \im \left ( \bfv^{\top} R_{0}(z) \bfv \right ) \prec \phi^{-1}. \]
	We now connect $ \im \left ( \bfv^{\top} R_{0}(z) \bfv \right ) $ with $ | \langle \bfu_{i}, \bfv \rangle |^{2} $ by the following identity
\begin{equation} \label{eq:delocalization proof}
	\im \left ( \bfv^{\top} R_{0}(z) \bfv \right ) = \sum_{j=1}^{p} \frac{\eta | \langle \bfu_{j}, \bfv \rangle |^{2}}{(\lambda_{j} - \lambda_{i})^{2} + \eta^{2}} \ge \frac{\langle \bfu_{i}, \bfv \rangle^{2}}{\eta}.
\end{equation}
	Since $ \eta = n^{-1 + \tau} $, we get $ \bbone(\Xi) \langle \bfu_{i}, \bfv \rangle^{2} \prec n^{-1 + \tau} \phi^{-1} = p^{-1} n^{\tau} $.
\end{proof}

If the real part of $ z $ lies outside the support of $ \varrho $, the local laws can be extended all the way down to the real axis with a good control of the error. For a fixed small constant $ \tau \in (0, 1) $, define
\begin{align} 
	\bfD_{\mathrm{o}} \equiv \bfD_{\mathrm{o}}(\tau) &:= \left \{ z = E + \i \eta \in \bbC: \gamma_{+} + n^{-2/3+\tau} \le E \le \gamma_{+} + \tau^{-1}, \ |\eta| \le \tau^{-1} \right \}, \label{eq:spectral domain outside} \\
	\bfD_{\text{e}} \equiv \bfD_{\text{e}}(\tau) &:= \left \{ z \in \bbC: E \ge \gamma_{+} + n^{-2/3+\tau}, \ |\eta| \le \tau^{-1} \right \}.  \label{eq:spectral domain extended}
\end{align}

\begin{lem}[Local laws outside the support] \label{lem:local law outside}
	Suppose that Assumption \ref{assump:main} (i)-(iii) holds. Let $ \bm{\zeta}_{1}, \bm{\zeta}_{2} \in \bbR^{p} $ be any deterministic unit vectors. Then, we have that
	\begin{equation}
		\left | \bm{\zeta}_{1}^{\top} R_{0}(z) \bm{\zeta}_{2} - \bm{\zeta}_{1}^{\top} \Pi_{R_{0}}(z) \bm{\zeta}_{2} \right | \prec \phi^{-1} (\kappa + \eta)^{-1/4} n^{-1/2} \label{eq:local law outside}
	\end{equation}
	uniformly for $ z = E + \i \eta \in \bfD_{\mathrm{o}} $. Moreover,
	\begin{equation}
		\left | \bm{\zeta}_{1}^{\top} R_{0}(z) \bm{\zeta}_{2} - \bm{\zeta}_{1}^{\top} \Pi_{R_{0}}(z) \bm{\zeta}_{2} \right | \prec \phi^{-1/2} |z|^{-1} \big ( (\kappa+\eta) + (\kappa + \eta)^{1/4} \big )^{-1} n^{-1/2}. \label{eq:local law extended}
	\end{equation}
	uniformly for $ z = E + \i \eta \in \bfD_{\mathrm{e}} $.
\end{lem}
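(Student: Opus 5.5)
The plan is to deduce both estimates from a resolvent comparison. First I extend the local law \eqref{eq:local law outside} from $\bfD_{\mathrm{o}}$ to $\bfD_{\mathrm{e}}$, which gives \eqref{eq:local law extended}. On $\bfD_{\mathrm{o}}$ itself, \eqref{eq:local law outside} is essentially the statement of the local law outside the spectrum from \cite{ding23,ding24}. It comes from the same fluctuation-averaging argument as Lemma \ref{lem:local law}, except that $\Psi(z)$ is replaced by its improved form outside the support. There $\im m \asymp \eta/\sqrt{\kappa+\eta}$ by \eqref{eq:estimate of im m}, and one uses the spectral decomposition together with rigidity (Lemma \ref{lem:rigidity}) to remove the $(n\eta)^{-1}$ terms.

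Concretely, for $z \in \bfD_{\mathrm{o}}$ with $\eta \ge n^{-1+\tau}$, I plug $\im m \asymp \eta/\sqrt{\kappa+\eta}$ into $\Psi$. This gives $\sqrt{\im m/(n\eta)} \asymp (\kappa+\eta)^{-1/4}n^{-1/2}$. The term $(n\eta)^{-1}$ is the only obstruction, and I would remove it by the standard argument of \cite[Theorem 3.12]{bloemendal14}. Take $z_0 = E + \i\eta_0$ with $\eta_0$ large enough that $(n\eta_0)^{-1} \le (\kappa+\eta_0)^{-1/4}n^{-1/2}$. Then compare $\bm{\zeta}_1^\top R_0(z)\bm{\zeta}_2$ with its value at $z_0$ using the spectral representation:
\[
\bm{\zeta}_1^\top (R_0(z)-R_0(z_0))\bm{\zeta}_2=\sum_k \langle \bm{\zeta}_1,\bfu_k\rangle\langle\bfu_k,\bm{\zeta}_2\rangle\Big(\frac{1}{\lambda_k-z}-\frac{1}{\lambda_k-z_0}\Big).
\]
Rigidity puts all $\lambda_k \le \gamma_+ + n^{-2/3+\epsilon}$ with high probability, so $|\lambda_k - E| \gtrsim \kappa$. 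This difference is therefore bounded by $\eta_0\kappa^{-1}\,\im(\bm{\zeta}^\top R_0(E+\i\kappa)\bm{\zeta})/\kappa$-type quantities via Cauchy--Schwarz, and these are controlled by the local law at height $\eta \sim \kappa$. The deterministic counterpart $\Pi_{R_0}$ is smooth on the same scale, with derivative bounded by Lemma \ref{lem:basic estimates of m}. Negative $\eta$ follows by conjugation symmetry, and $\eta$ near zero by continuity, since $R_0$ has no poles there on the high-probability event. The resulting error is $\phi^{-1}(\kappa+\eta)^{-1/4}n^{-1/2}$ up to $n^\epsilon$.

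For \eqref{eq:local law extended}, on large $E$ (say $E \ge \gamma_+ + \tau^{-1}$, so $|z| \gtrsim \kappa$) I would avoid the local-law machinery and expand instead. Write $R_0(z) = -z^{-1}(1 - Q_0/z)^{-1}$, or more cleanly use the identity $R_0 = z^{-1}(\Sigma_0^{1/2}X G X^\top\Sigma_0^{1/2} - 1)$ with companion resolvent $G = (\cQ_0 - z)^{-1}$. The norm $\|Q_0\| \le \gamma_+ + o(1)$ with high probability gives $\|G\| \le (\kappa)^{-1}$. Concentration of quadratic forms $\bm{\zeta}^\top\Sigma_0^{1/2}X G X^\top \Sigma_0^{1/2}\bm{\zeta}$ then yields fluctuation $\phi^{-1/2}n^{-1/2}\kappa^{-1}$ after normalization. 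Together with the $|z|^{-1}$ prefactor, this matches the $(\kappa+\eta)^{-1}$ branch. The two regimes glue continuously at $E \asymp \gamma_+ + \tau^{-1}$. Note that the bound in the region $\bfD_{\mathrm{o}}$ is consistent with the first estimate, since $\phi^{-1} \le \phi^{-1/2}|z|^{-1}$ there, as $|z| \asymp \phi^{1/2}$.

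The main obstacle is the first step: removing the $(n\eta)^{-1}$ term uniformly down to the real axis while keeping the sharp $(\kappa+\eta)^{-1/4}$ dependence. Deterministic-vector quadratic forms do not average as nicely as the trace, so the Cauchy--Schwarz comparison must be done carefully with the correct normalization $\phi^{-1}$ inherited from $\Pi_{R_0}$. Uniformity in $z$ is then obtained by the net argument of Remark \ref{rem:simultaneous bound}.
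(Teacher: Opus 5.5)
Your argument for \eqref{eq:local law outside} is essentially the paper's. You compare with $z_0=E+\i\eta_0$, $\eta_0\asymp n^{-1/2}\kappa^{1/4}\le\kappa$. The random difference is handled by the spectral decomposition plus rigidity, and the deterministic one by $|m'(z)|\le \im m(z)/\eta\lesssim(\kappa+\eta)^{-1/2}$. One small slip: the comparison gives $(\eta_0/\kappa)\,\im\bm{\zeta}^{\top}R_0(E+\i\kappa)\bm{\zeta}$ (or simply $\im\bm{\zeta}^{\top}R_0(z_0)\bm{\zeta}$, as in the paper), without the extra factor $\kappa^{-1}$ you wrote, which would lose a power of $\kappa$.

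The genuine gap is in \eqref{eq:local law extended} for $E\ge\gamma_{+}+\tau^{-1}$. Your identity $zR_0(z)+1=\Sigma_0^{1/2}XGX^{\top}\Sigma_0^{1/2}$ is correct, and it isolates the right object. However, the step ``concentration of quadratic forms yields fluctuation $\phi^{-1/2}n^{-1/2}\kappa^{-1}$'' has no justification. The vector $X^{\top}\Sigma_0^{1/2}\bm{\zeta}$ and the matrix $G=(X^{\top}\Sigma_0X-z)^{-1}$ are built from the same $X$. Large deviation bounds for $\mathbf{y}^{\top}A\mathbf{y}$ require $A$ independent of $\mathbf{y}$, so they do not apply here. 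What you would actually need is that this quadratic form lies within relative error $n^{-1/2}$ of $\phi^{-1/2}\bm{\zeta}^{\top}m\Sigma_0(1+\phi^{-1/2}m\Sigma_0)^{-1}\bm{\zeta}$. You never identify this centering, although it is exactly what must match $\Pi_{R_0}$. Proving it is itself an anisotropic local law at large $|z|$, i.e.\ the statement being proved. You would also need uniformity over the unbounded range of $E$, which the net argument of Remark \ref{rem:simultaneous bound} does not provide.

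The missing idea is to reduce the far region to estimates already in hand. The paper works with $f_z(x)=\frac{1}{x-z}+\frac{1}{z}=\frac{x}{z(x-z)}$, which vanishes on the kernel of $Q_0$. It is smooth near $[\gamma_-,\gamma_+]$, where all nontrivial eigenvalues lie by rigidity. The Helffer--Sj\"ostrand formula then writes $\bm{\zeta}_1^{\top}f_z(Q_0)\bm{\zeta}_2$ as an integral of $R_0$ over a bounded spectral domain, where Lemma \ref{lem:local law} and \eqref{eq:local law outside} already hold simultaneously (cf.\ \cite[Proposition 3.8]{bloemendal16}). The decay of $f_z$ then produces the factors $|z|^{-1}$ and $\big((\kappa+\eta)+(\kappa+\eta)^{1/4}\big)^{-1}$ deterministically, uniformly in $E$. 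Replacing your concentration step by this reduction closes the gap; the rest of your plan stands.
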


\begin{proof}
	In this proof, we assume $ \eta > 0 $. The case $ \eta = 0 $ follows by taking the limit $ \eta \downarrow 0 $, with the convention $ m(E) := \lim_{\eta \downarrow 0} m(E+\i \eta) $. The case $ \eta < 0 $ follows by symmetry. Since the proof of \eqref{eq:local law outside} follows the standard arguments established in \cite{bloemendal14}, we only highlight the main steps. Define $ \eta_{0} := n^{-1/2} \kappa^{1/4} $. Note that $ \eta_{0} \le \kappa $ on $ \bfD_{\mathrm{o}} $. We focus on the case where $ 0 < \eta < \eta_{0} $, since \eqref{eq:local law outside} easily follows by Lemmas \ref{lem:basic estimates of m} and \ref{lem:local law} if $ \eta \ge \eta_{0} $. Following the lines in the proof of Theorem 3.12 in \cite{bloemendal14}, it suffices to show that 
	\begin{equation} \label{eq:outside proof 1}
			\left | \bm{\zeta}^{\top} R_{0}(z) \bm{\zeta} - \bm{\zeta}^{\top} R_{0}(z_{0}) \bm{\zeta} \right | \prec \phi^{-1} \kappa^{-1/4} n^{-1/2}
	\end{equation}
	and
	\begin{equation} \label{eq:outside proof 2}
		\left | \bm{\zeta}^{\top} \Pi_{R_{0}}(z) \bm{\zeta} - \bm{\zeta}^{\top} \Pi_{R_{0}}(z_{0}) \bm{\zeta} \right | \prec \phi^{-1} \kappa^{-1/4} n^{-1/2},
	\end{equation}
	where $ z_{0} := E+ \i \eta_{0} $.
	
	By (6.4) and (6.5) in \cite{bloemendal14}, where the rigidity (Lemma \ref{lem:rigidity}) is used, \eqref{eq:outside proof 1} holds if one can show
	\begin{equation} \label{eq:outside proof 1.1}
		\im \left ( \bm{\zeta}^{\top} R_{0}(z_{0}) \bm{\zeta} \right ) \prec \phi^{-1} \kappa^{-1/4} n^{-1/2}.
	\end{equation}
	By \eqref{eq:spectrum of Sigma0}, definition of $ \Pi_{R_{0}}(z) $ in \eqref{eq:counterpart of R_0}, and Lemma \ref{lem:basic estimates of m}, we have
	\[ \im \left ( \bm{\zeta}^{\top} \Pi_{R_{0}}(z_{0}) \bm{\zeta} \right ) \asymp | z_{0} |^{-2} \eta_{0} \left ( 1 + (\kappa + \eta_{0})^{-1/2} \right ) \]
	From the definition of $ \eta_{0} $, $ |z_{0}| \asymp \phi^{1/2} $, and that $ \eta_{0} \le \kappa \le \tau^{-1} $, we get
	\[ \im \left ( \bm{\zeta}^{\top} \Pi_{R_{0}}(z_{0}) \bm{\zeta} \right ) \lesssim \phi^{-1} \kappa^{-1/4} n^{-1/2}. \]
	Furthermore, we have $ \Psi(z_{0}) \asymp \kappa^{-1/4} n^{-1/2} $ by Lemma \ref{lem:basic estimates of m}. Then, \eqref{eq:outside proof 1.1} follows from Lemma \ref{lem:local law} with $ z_{0} $.
	
	We now turn to \eqref{eq:outside proof 2}. From \eqref{eq:spectrum of Sigma0}, definition of $ \Pi_{R_{0}}(z) $ in \eqref{eq:counterpart of R_0}, Lemma \ref{lem:basic estimates of m}, that $ |z-z_{0}| \le \eta_{0} $, and that $ |z|, |z_{0}| \asymp \phi^{1/2} $, it suffices to show that
	\begin{equation*}
		|zm(z) - z_{0} m(z_{0})| \prec \phi^{1/2} \kappa^{-1/4} n^{-1/2}.
	\end{equation*}
	By \eqref{eq:spectrum of Sigma0}, \eqref{eq:self-consistent equation}, \eqref{eq:f}, and Lemma \ref{lem:basic estimates of m}, we have
	\[ |(zm(z))'| = \left | \frac{1}{p} \sum_{i=1}^{p} \frac{\phi^{1/2} \sigma_{i} m'(z)}{\left (1+ \phi^{-1/2} \sigma_{i} (m(z) \right )^{2}} \right | \asymp \phi^{1/2} |m'(z)| \]
	for $ z \in \bfD_{\mathrm{o}} $.
	From $ m'(z) = \int \frac{1}{(x-z)^2} \varrho(\dif x) $ and the square-root behavior of $ \varrho $ near the edge (Lemma \ref{lem:properties of rho and f}, we have $ |m'(z)| \lesssim \kappa^{-1/2} $ for $ z \in \bfD_{\text{o}} $. Thus,
	\[ |zm(z) - z_{0}m(z_{0})| \lesssim \phi^{1/2} \kappa^{-1/2} |z - z_0| \le \phi^{1/2} \kappa^{-1/4} n^{-1/2}, \]
	where we use $ |z-z_0| \le \eta_{0} $ for the last step.
	This completes the proof of \eqref{eq:local law outside}.
	
	The proof for \eqref{eq:local law extended} follows the standard application of Helffer-Sjöstrand formula to $ f_{z}(x) = \frac{1}{x-z} + \frac{1}{z} $. See \cite[Proposition 3.8]{bloemendal16}. We omit the details.
\end{proof}

In what follows, we use the slight modifications. Define 
\begin{equation} \label{eq:G and counterpart of G}
	\widetilde{G}(z) := \phi^{1/2} ( 1 + z R_{0}(z) ), \quad \Pi_{\widetilde{G}}(z) := \phi^{1/2} ( 1 + z \Pi_{R_{0}}(z) ) = \frac{m(z) \Sigma_{0}}{1+\phi^{-1/2}m(z) \Sigma_{0}}.
\end{equation}
The following lemma is an immediate consequence of Lemmas \ref{lem:local law} and \ref{lem:local law outside} since $ |z| \asymp \phi^{1/2} $ on $ \bfD $ and $ \bfD_{\mathrm{o}} $.

\begin{lem} \label{lem:G local law}
	Suppose that Assumption \ref{assump:main} (i)-(iii) holds. Let $ \bm{\zeta}_{1}, \bm{\zeta}_{2} \in \bbR^{p} $ and $ \bm{\xi}_{1}, \bm{\xi}_{2} \in \bbR^{n} $ be any deterministic unit vectors.
	\begin{enumerate}
		\item[(i)] For $ z = E + \i \eta \in \bfD $ uniformly,
		\begin{equation}
			\left | \bm{\zeta}_{1}^{\top} \widetilde{G}(z) \bm{\zeta}_{2} - \bm{\zeta}_{1}^{\top} \Pi_{\widetilde{G}}(z) \bm{\zeta}_{2} \right | \prec \Psi(z). \label{eq:G local law}
		\end{equation}
		\item[(ii)] For $ z = E + \i \eta \in \bfD_{\mathrm{o}} $ uniformly,
		\begin{equation}
			\left | \bm{\zeta}_{1}^{\top} \widetilde{G}(z) \bm{\zeta}_{2} - \bm{\zeta}_{1}^{\top} \Pi_{\widetilde{G}}(z) \bm{\zeta}_{2} \right | \prec (\kappa + \eta)^{-1/4} n^{-1/2}. \label{eq:G local law outside}
		\end{equation}
		\item[(iii)] For $ z = E + \i \eta \in \bfD_{\mathrm{e}} $ uniformly,
		\begin{equation}
			\left | \bm{\zeta}_{1}^{\top} \widetilde{G}(z) \bm{\zeta}_{2} - \bm{\zeta}_{1}^{\top} \Pi_{\widetilde{G}}(z) \bm{\zeta}_{2} \right | \prec \big ( (\kappa+\eta) + (\kappa + \eta)^{1/4} \big )^{-1} n^{-1/2}.\label{eq:G local law extended}
		\end{equation}
	\end{enumerate}
\end{lem}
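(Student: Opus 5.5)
The plan is to derive Lemma \ref{lem:G local law} directly from Lemmas \ref{lem:local law} and \ref{lem:local law outside}. The identity
\[ \bm{\zeta}_{1}^{\top} \big( \widetilde{G}(z) - \Pi_{\widetilde{G}}(z) \big) \bm{\zeta}_{2} = \phi^{1/2} z \, \bm{\zeta}_{1}^{\top} \big( R_{0}(z) - \Pi_{R_{0}}(z) \big) \bm{\zeta}_{2} \]
follows from the definitions in \eqref{eq:G and counterpart of G}, because the identity terms $\phi^{1/2}\bm{\zeta}_{1}^{\top}\bm{\zeta}_{2}$ cancel. So every error bound for $R_{0}$ simply gets multiplied by $\phi^{1/2}|z|$.

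Before that, I will confirm that the closed form of $\Pi_{\widetilde{G}}$ is correct. Writing $A = 1+\phi^{-1/2}m\Sigma_{0}$, we have $1 + z\Pi_{R_{0}} = 1 - A^{-1} = \phi^{-1/2} m \Sigma_{0} A^{-1}$. Multiplying by $\phi^{1/2}$ gives the stated formula.

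Next I need $|z| \asymp \phi^{1/2}$ on $\bfD$ and $\bfD_{\mathrm{o}}$. By \eqref{eq:properties of gamma pm} we have $\gamma_{\pm} = \phi^{1/2}\mathfrak{m}_{1} + \mathrm{O}(1)$, and $\mathfrak{m}_{1} \in [\varsigma, \varsigma^{-1}]$ by \eqref{eq:spectrum of Sigma0}. On both domains $|E - \phi^{1/2}\mathfrak{m}_{1}| = \mathrm{O}(1)$ and $|\eta| \le \tau^{-1}$. Since $\phi \to \infty$ by \eqref{eq:uhd}, it follows that $|z| \asymp \phi^{1/2}$ for large $n$.

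The three parts then follow from the corresponding local laws.
\begin{enumerate}
\item[(i)] Multiply \eqref{eq:local law} by $\phi^{1/2}|z| \asymp \phi$ to obtain $\Psi(z)$.
\item[(ii)] Multiply \eqref{eq:local law outside} in the same way to obtain $(\kappa+\eta)^{-1/4}n^{-1/2}$.
\item[(iii)] Multiply \eqref{eq:local law extended} by $\phi^{1/2}|z|$. The factor $|z|$ cancels the $|z|^{-1}$ and the factor $\phi^{1/2}$ cancels the $\phi^{-1/2}$, leaving $((\kappa+\eta)+(\kappa+\eta)^{1/4})^{-1}n^{-1/2}$.
\end{enumerate}
Part (iii) therefore needs no lower bound on $|z|$ on the unbounded domain $\bfD_{\mathrm{e}}$. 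Uniformity in $z$ is inherited from the earlier lemmas.

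I expect no real obstacle. The only point needing care is part (iii): the exact form of \eqref{eq:local law extended} is what makes the cancellation of $|z|$ work, so it should be checked.
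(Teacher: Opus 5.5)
Your proposal is correct and matches the paper's argument: the paper also treats this lemma as an immediate consequence of Lemmas \ref{lem:local law} and \ref{lem:local law outside}, using the identity $\widetilde{G}-\Pi_{\widetilde{G}} = \phi^{1/2} z\,(R_{0}-\Pi_{R_{0}})$ together with $|z|\asymp\phi^{1/2}$ on $\bfD$ and $\bfD_{\mathrm{o}}$. You are also right that part (iii) needs no lower bound on $|z|$ on $\bfD_{\mathrm{e}}$, because the prefactor $\phi^{-1/2}|z|^{-1}$ in \eqref{eq:local law extended} cancels exactly.
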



\section{Proof of the main results} \label{app:proof}
This appendix contains the proofs of Theorems \ref{thm:eigenvalue}, \ref{thm:eigenvector}, and \ref{thm:spiked outlier eigenvector projection sum}, following the strategies from \cite{ky13,bloemendal16,bun18,ding21,dy21,dj23,dly24}. Before we start the proof, we introduce several lemmas and notations that will be used throughout the proofs.

We denote
\begin{gather}
	\sfD = \operatorname{diag}(\sfd_{1}, \dots, \sfd_{r},  \allowbreak 0, \dots, 0) \in \bbR^{p \times p}, \quad \sfD_{\mathrm{s}} = \diag{\sfd_{1}, \dots, \sfd_{r}} \in \bbR^{r \times r}, \\
	\Lambda_{\textrm{s}} = \diag{\sigma_{1}, \dots, \sigma_{r}} \in \bbR^{r \times r}, \quad V_{\textrm{s}} = [ \bfv_{1} \ \cdots \ \bfv_{r} ] \in \bbR^{p \times r}.
\end{gather}
With these matrices, we have
\[ \widetilde{\Sigma} = \Sigma_{0} \left ( 1 + V \sfD V^{\top} \right ) = \left ( 1 + V \sfD V^{\top} \right ) \Sigma_{0}, \quad V_{\textrm{s}}^{\top} \Sigma_{0} V_{\textrm{s}} = \Lambda_{\textrm{s}}, \quad V\sfD V^{\top} = V_{\textrm{s}} \sfD_{\textrm{s}} V_{\textrm{s}}^{\top}. \]
Similar to \eqref{eq:R_0}, we define the resolvent
\begin{equation} \label{eq:R tilde}
	\widetilde{R}(z) := ( \widetilde{Q} - z )^{-1}, \quad z \in \bbC_{+},
\end{equation}
where $ \widetilde{Q} $ is the sample covariance matrix defined in \eqref{eq:sample covmat} under the spiked covariance matrix model. Next, we collect some lemmas as in \cite[Section 3.4]{bloemendal16} without proof. Recall the definition of $ \widetilde{G} $ defined in \eqref{eq:G and counterpart of G}.
\begin{lem} \label{lem:perturb 1}
	Suppose that $ \widetilde{\lambda} \in \bbR $ is not an eigenvalue of $ Q_{0} $. Then, $ \widetilde{\lambda} $ is an eigenvalue of $ \widetilde{Q} $ if and only if
	\begin{equation}
		\det \left ( \phi^{1/2} \sfD_{\mathrm{s}}^{-1} + V_{\mathrm{s}}^{\top} \widetilde{G}(\widetilde{\lambda}) V_{\mathrm{s}} \right ) = 0.
	\end{equation}
\end{lem}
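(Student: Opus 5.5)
We reduce the eigenvalue equation for $\widetilde{Q}$ to a $p\times p$ determinant involving $R_{0}$, and then shrink it to an $r\times r$ determinant with Sylvester's identity $\det(I_{p}+AB)=\det(I_{r}+BA)$.

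\textbf{Step 1 (factorisation).} Write $\widetilde{\Sigma}^{1/2}=(1+V\sfD V^{\top})^{1/2}\Sigma_{0}^{1/2}$. This is legitimate because $\Sigma_{0}$ and $V\sfD V^{\top}$ commute, having the common eigenbasis $V$. Put $S:=(1+V\sfD V^{\top})^{1/2}$, which is invertible. Then
\[ \widetilde{Q}=S Q_{0} S, \qquad \widetilde{Q}-\widetilde{\lambda}=S\bigl(Q_{0}-\widetilde{\lambda} S^{-2}\bigr)S. \]
Hence $\widetilde{\lambda}$ is an eigenvalue of $\widetilde{Q}$ if and only if $\det\bigl(Q_{0}-\widetilde{\lambda}(1+V\sfD V^{\top})^{-1}\bigr)=0$.

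\textbf{Step 2 (low-rank structure).} We have
\[ (1+V_{\mathrm{s}}\sfD_{\mathrm{s}}V_{\mathrm{s}}^{\top})^{-1}=1-V_{\mathrm{s}}\sfD_{\mathrm{s}}(1+\sfD_{\mathrm{s}})^{-1}V_{\mathrm{s}}^{\top}. \]
Therefore
\[ Q_{0}-\widetilde{\lambda}(1+V\sfD V^{\top})^{-1}=(Q_{0}-\widetilde{\lambda})+\widetilde{\lambda}\,V_{\mathrm{s}}\sfD_{\mathrm{s}}(1+\sfD_{\mathrm{s}})^{-1}V_{\mathrm{s}}^{\top}. \]
Since $\widetilde{\lambda}$ is not an eigenvalue of $Q_{0}$, the resolvent $R_{0}(\widetilde{\lambda})$ exists. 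Factor out $(Q_{0}-\widetilde{\lambda})$ and apply Sylvester's identity. The condition becomes
\[ \det\bigl(1_{r}+\widetilde{\lambda}\,\sfD_{\mathrm{s}}(1+\sfD_{\mathrm{s}})^{-1}V_{\mathrm{s}}^{\top}R_{0}(\widetilde{\lambda})V_{\mathrm{s}}\bigr)=0. \]
Multiplying on the left by the invertible matrix $(1+\sfD_{\mathrm{s}})\sfD_{\mathrm{s}}^{-1}$, this is equivalent to
\[ \det\bigl(\sfD_{\mathrm{s}}^{-1}+1+\widetilde{\lambda}V_{\mathrm{s}}^{\top}R_{0}(\widetilde{\lambda})V_{\mathrm{s}}\bigr)=0. \]
Note that this requires $\sfd_{i}\neq 0$, and also $1+\sfd_{i}\neq 0$, which holds since $\widetilde{\Sigma}$ is positive definite.

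\textbf{Step 3 (identify $\widetilde{G}$).} Because $V_{\mathrm{s}}^{\top}V_{\mathrm{s}}=1_{r}$, we have
\[ 1+\widetilde{\lambda}V_{\mathrm{s}}^{\top}R_{0}V_{\mathrm{s}}=V_{\mathrm{s}}^{\top}(1+\widetilde{\lambda}R_{0})V_{\mathrm{s}}=\phi^{-1/2}V_{\mathrm{s}}^{\top}\widetilde{G}(\widetilde{\lambda})V_{\mathrm{s}}, \]
by the definition \eqref{eq:G and counterpart of G}. Multiplying the whole matrix by $\phi^{1/2}$ does not change whether the determinant vanishes, so we obtain
\[ \det\bigl(\phi^{1/2}\sfD_{\mathrm{s}}^{-1}+V_{\mathrm{s}}^{\top}\widetilde{G}(\widetilde{\lambda})V_{\mathrm{s}}\bigr)=0, \]
which is the claim.

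The only delicate point is Step 1. One must keep every matrix square root well defined, and do the factorisation so that an invertible factor is pulled out on each side; this is what makes the equivalence exact rather than a one-sided implication. The remaining steps are pure algebra.
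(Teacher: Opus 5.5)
Your proof is correct and is essentially the argument the paper relies on. The paper states this lemma without proof and defers to \cite[Section 3.4]{bloemendal16}, where the same steps appear: the factorization $\widetilde{Q} = S Q_{0} S$ with $S = (1+V\sfD V^{\top})^{1/2}$ (valid here because $\Sigma_{0}$ and $V\sfD V^{\top}$ share the eigenbasis $V$), the rank-$r$ inverse formula for $S^{-2}$, and the reduction to an $r\times r$ determinant via $\det(1+AB)=\det(1+BA)$.
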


\begin{lem} \label{lem:perturb 2}
	For $ z \in \bbC^{+} $,
	\begin{equation}
		V_{\mathrm{s}}^{\top} \widetilde{R}(z) V_{\mathrm{s}} = \frac{1}{z} \left [ \sfD_{\mathrm{s}}^{-1} - \frac{(1 + \sfD_{\mathrm{s}})^{1/2}}{\sfD_{\mathrm{s}}} \left ( \sfD_{\mathrm{s}}^{-1} + \phi^{-1/2} V_{\mathrm{s}}^{\top} \widetilde{G}(z) V_{\mathrm{s}} \right )^{-1} \frac{(1 + \sfD_{\mathrm{s}})^{1/2}}{\sfD_{\mathrm{s}}} \right ].
	\end{equation}
\end{lem}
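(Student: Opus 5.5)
The plan is to write $\widetilde{Q}$ as a symmetric multiplicative perturbation of $Q_{0}$, turn that into an additive finite-rank perturbation of $R_{0}(z)^{-1}$, and then apply the Woodbury identity.

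\emph{Factorization.} Since $\widetilde{\Sigma}=\Sigma_{0}(1+V\sfD V^{\top})$ and the two factors commute, we have $\widetilde{\Sigma}^{1/2}=\Sigma_{0}^{1/2}S=S\Sigma_{0}^{1/2}$. Here
\[
S:=(1+V\sfD V^{\top})^{1/2}=1+V_{\mathrm{s}}\big((1+\sfD_{\mathrm{s}})^{1/2}-1\big)V_{\mathrm{s}}^{\top}.
\]
Hence $\widetilde{Q}=SQ_{0}S$. Using
\[
S^{-2}=1-V_{\mathrm{s}}\sfD_{\mathrm{s}}(1+\sfD_{\mathrm{s}})^{-1}V_{\mathrm{s}}^{\top},
\]
we get
\[
\widetilde{Q}-z=S\big(Q_{0}-z+zV_{\mathrm{s}}TV_{\mathrm{s}}^{\top}\big)S,\qquad T:=\sfD_{\mathrm{s}}(1+\sfD_{\mathrm{s}})^{-1}.
\]
Therefore $\widetilde{R}(z)=S^{-1}\big(R_{0}(z)^{-1}+zV_{\mathrm{s}}TV_{\mathrm{s}}^{\top}\big)^{-1}S^{-1}$. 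Moreover $V_{\mathrm{s}}^{\top}S^{-1}=(1+\sfD_{\mathrm{s}})^{-1/2}V_{\mathrm{s}}^{\top}$, because $\bfv_{1},\dots,\bfv_{r}$ are eigenvectors of $S$.

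\emph{Woodbury step.} Set $A:=V_{\mathrm{s}}^{\top}R_{0}V_{\mathrm{s}}$ and $B:=z^{-1}T^{-1}=z^{-1}(1+\sfD_{\mathrm{s}})\sfD_{\mathrm{s}}^{-1}$. The Woodbury identity gives
\[
V_{\mathrm{s}}^{\top}\big(R_{0}^{-1}+zV_{\mathrm{s}}TV_{\mathrm{s}}^{\top}\big)^{-1}V_{\mathrm{s}}=A-A(B+A)^{-1}A=B-B(B+A)^{-1}B,
\]
where the last equality is elementary algebra.

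\emph{Rewriting in terms of $\widetilde{G}$.} By the definition \eqref{eq:G and counterpart of G}, $zR_{0}=\phi^{-1/2}\widetilde{G}-1$, so $zA=\phi^{-1/2}V_{\mathrm{s}}^{\top}\widetilde{G}V_{\mathrm{s}}-1$. Consequently
\[
B+A=z^{-1}\big(\sfD_{\mathrm{s}}^{-1}+\phi^{-1/2}V_{\mathrm{s}}^{\top}\widetilde{G}V_{\mathrm{s}}\big).
\]
Conjugating by $(1+\sfD_{\mathrm{s}})^{-1/2}$ on both sides gives two pieces. The first is $(1+\sfD_{\mathrm{s}})^{-1/2}B(1+\sfD_{\mathrm{s}})^{-1/2}=z^{-1}\sfD_{\mathrm{s}}^{-1}$. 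For the second, $B(1+\sfD_{\mathrm{s}})^{-1/2}=z^{-1}(1+\sfD_{\mathrm{s}})^{1/2}\sfD_{\mathrm{s}}^{-1}$, which turns the correction term into
\[
z^{-1}\,\frac{(1+\sfD_{\mathrm{s}})^{1/2}}{\sfD_{\mathrm{s}}}\big(\sfD_{\mathrm{s}}^{-1}+\phi^{-1/2}V_{\mathrm{s}}^{\top}\widetilde{G}V_{\mathrm{s}}\big)^{-1}\frac{(1+\sfD_{\mathrm{s}})^{1/2}}{\sfD_{\mathrm{s}}}.
\]
This is exactly the claimed formula.

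\emph{Invertibility (the only delicate point).} For $z\in\bbC^{+}$ both $Q_{0}-z$ and $\widetilde{Q}-z$ are invertible. The standard determinant identity
\[
\det\big(R_{0}^{-1}+zV_{\mathrm{s}}TV_{\mathrm{s}}^{\top}\big)=\det(R_{0}^{-1})\,\det(zT)\,\det(B+A)
\]
then shows that $B+A$, and hence $\sfD_{\mathrm{s}}^{-1}+\phi^{-1/2}V_{\mathrm{s}}^{\top}\widetilde{G}V_{\mathrm{s}}$, is invertible. This justifies every inverse used above.
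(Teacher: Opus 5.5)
Your proof is correct. The factorization $\widetilde{Q}-z=S(Q_{0}-z+zV_{\mathrm{s}}TV_{\mathrm{s}}^{\top})S$, the Woodbury step with the rewriting $A-A(A+B)^{-1}A=B-B(A+B)^{-1}B$, the substitution $zR_{0}=\phi^{-1/2}\widetilde{G}-1$, and the determinant-lemma argument for invertibility all check out and give exactly the stated formula. The paper gives no proof of its own and imports the lemma from \cite{bloemendal16}; your route is the standard Woodbury-identity derivation behind that result.
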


\begin{lem} \label{lem:interlacing}
	For all $ i \in \llb n \rrb $,
	\begin{equation}
		\widetilde{\lambda}_{i} \in [ \lambda_{i}, \lambda_{i-r} ],
	\end{equation}
	where $ \lambda_{i} = \infty $ for $ i < 1 $ by convention.
\end{lem}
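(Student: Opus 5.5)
The statement is the interlacing $\widetilde{\lambda}_{i} \in [\lambda_{i}, \lambda_{i-r}]$ (here the eigenvalues are ordered decreasingly, as for $\widetilde{\lambda}_{i}$). We deduce it from the fact that $\widetilde{Q}$ and $Q_{0}$ differ by a rank-$r$ perturbation of a special sign, working on the $n\times n$ companion side. The non-trivial eigenvalues of $\widetilde{Q}=\widetilde{\Sigma}^{1/2}XX^{\top}\widetilde{\Sigma}^{1/2}$ coincide with those of $\widetilde{\cQ}:=X^{\top}\widetilde{\Sigma}X$. Likewise, the non-trivial eigenvalues of $Q_{0}$ coincide with those of $\cQ_{0}=X^{\top}\Sigma_{0}X$. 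Since $p>n$, the remaining $p-n$ eigenvalues of both $p\times p$ matrices are zero, so it suffices to compare $\widetilde{\cQ}$ and $\cQ_{0}$ for $i\in\llb n\rrb$.

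First, write
\[ \widetilde{\Sigma}-\Sigma_{0}=\sum_{j=1}^{r}\sfd_{j}\sigma_{j}\bfv_{j}\bfv_{j}^{\top}=V_{\mathrm{s}}\sfD_{\mathrm{s}}\Lambda_{\mathrm{s}}V_{\mathrm{s}}^{\top}, \]
which is positive semidefinite of rank at most $r$ because $\sfd_{j},\sigma_{j}>0$. It follows that
\[ \widetilde{\cQ}-\cQ_{0}=(X^{\top}V_{\mathrm{s}})\,\sfD_{\mathrm{s}}\Lambda_{\mathrm{s}}\,(X^{\top}V_{\mathrm{s}})^{\top} \]
is positive semidefinite with rank at most $r$.

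Second, we apply Weyl's inequality and the Courant--Fischer min-max principle. Because the perturbation is positive semidefinite, $\lambda_{i}(\widetilde{\cQ})\ge\lambda_{i}(\cQ_{0})$, which gives the lower bound $\widetilde{\lambda}_{i}\ge\lambda_{i}$. For the upper bound, apply Courant--Fischer to $\widetilde{\cQ}$ restricted to the subspace $\ker\big((X^{\top}V_{\mathrm{s}})^{\top}\big)$, which has codimension at most $r$. On that subspace $\widetilde{\cQ}$ and $\cQ_{0}$ agree, and this yields $\lambda_{i}(\widetilde{\cQ})\le\lambda_{i-r}(\cQ_{0})$ for $i>r$. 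For $i\le r$ the bound is trivial under the convention $\lambda_{i-r}=\infty$.

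Both steps are standard finite-dimensional linear algebra, so no genuine obstacle arises. The only points requiring care are the identification of non-trivial eigenvalues across the $p\times p$ and $n\times n$ formulations, and consistent use of the decreasing ordering. Alternatively, the bound could be derived in the $p\times p$ picture from Lemma \ref{lem:perturb 1} via a monotonicity argument in the $\sfd_{j}$. The companion-matrix route is cleaner and is the one I would use.
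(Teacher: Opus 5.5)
Your argument is correct. On the $n\times n$ companion side, $X^{\top}\widetilde{\Sigma}X - X^{\top}\Sigma_{0}X = (X^{\top}V_{\mathrm{s}})\,\sfD_{\mathrm{s}}\Lambda_{\mathrm{s}}\,(X^{\top}V_{\mathrm{s}})^{\top}$ is positive semidefinite of rank at most $r$, so Weyl monotonicity gives the lower bound and Courant--Fischer on $\ker(V_{\mathrm{s}}^{\top}X)$ gives the upper bound. You are also right to use the decreasing ordering, even though the paper's appendix writes $\lambda_{1}\le\dots\le\lambda_{n}$.

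The paper gives no proof and imports the lemma from \cite{bloemendal16}. There it rests on this same finite-rank positive-perturbation interlacing, so your route is essentially the intended one.
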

%

\subsection{Proof of Theorem \ref{thm:eigenvalue}} \label{app:spiked eigenvalue proof}
The proof of Theorem \ref{thm:eigenvalue} can be outlined in two steps. First, we construct the permissible region for the eigenvalues of $ \widetilde{Q} $, in the sense that, with high probability, no eigenvalue of $ \widetilde{Q} $ exists outside this region. Second, we show that each connected component of the permissible region contains exactly one eigenvalue of $ \widetilde{Q} $ with high probability.

\noindent\textbf{Proof of Theorem \ref{thm:eigenvalue} (i)}. Recall the notation $ \mathfrak{a}_{i} $ defined in \eqref{eq:a_i}. For a fixed small $ \epsilon > 0 $, define
\begin{equation} \label{eq:permissible region 1}
	\cI_{0} := \left [ 0, \gamma_{+} + n^{-2/3+2\epsilon} \right ] \quad \text{and} \quad \cI_{i} := \left [ \mathfrak{a}_{i} - n^{-1/2 + \epsilon},
	\ \mathfrak{a}_{i} + n^{-1/2 + \epsilon} \right ], \quad i \in \llb r \rrb.
\end{equation}
We claim that $ \{ \cI_{i} \}_{i=0}^{r} $ are disjoint for sufficiently large $ n $. To show this, note that for any $ t \in (\mathsf{c}_{1}, 0) $ and sufficiently large $ n $, we have
\[ f''(t) = -\frac{2}{t^{3}} + \frac{2}{p} \sum_{i=1}^{p} \frac{\phi^{-1/2} \sigma_{i}^{3}}{\left (1 + \phi^{-1/2} \sigma_{i} t \right )^{3}} \asymp |t|^{-3} \gtrsim 1, \]
where we use \eqref{eq:spectrum of Sigma0} in the second step and Lemma \ref{lem:properties of rho and f} (1) in the third step. Then, the mean value theorem with \eqref{eq:spectrum of Sigma0} and \eqref{eq:spike1} gives that 
\begin{equation} \label{eq:a_i separate from support}
	\begin{aligned}
		\mathfrak{a}_{r} - \gamma_{+} &= f(-\phi^{1/2} \widetilde{\sigma}_{r}^{-1}) - f(\sfc_{1}) \\
		&= f''(\xi) (-\phi^{1/2} \widetilde{\sigma}_{r}^{-1}-\mathsf{c}_{1})^{2} \quad \text{for some } \xi \in (\mathsf{c}_{1}, -\phi^{1/2} \widetilde{\sigma}_{r}^{-1}) \\
		&\gtrsim (\varpi \varsigma)^{2},
	\end{aligned}
\end{equation}
where \eqref{eq:spike1} is used. From similar arguments with the fact that $ f' $ is increasing over $ [\mathsf{c}_{1}, 0) $ for sufficiently large $ n $ and \eqref{eq:spike2}, we also have that
\begin{align} \label{eq:a_i disjoint}
	\mathfrak{a}_{i} - \mathfrak{a}_{i+1} = f(-\phi^{1/2} \widetilde{\sigma}_{i}^{-1}) - f(-\phi^{1/2} \widetilde{\sigma}_{i+1}^{-1}) \gtrsim \varpi \varsigma \quad \text{for } i = 1, \dots, r-1,
\end{align}
which completes the proof of the claim.

Next, we show that $ \cI := \sqcup_{i=0}^{r} \cI_{i} $ is the permissible region for the eigenvalues of $ \widetilde{Q} $. By Lemmas \ref{lem:rigidity} and \ref{lem:G local law} (iii) with Remark \ref{rem:simultaneous bound}, there exists a high probability event $ \Xi_{1} \equiv \Xi_{1}(\epsilon) $ such that
\begin{equation} \label{eq:outlier eigenvalue proof 1}
	\bbone (\Xi_{1}) \lambda_{1} \le \gamma_{+} + n^{-2/3 + 2\epsilon},
\end{equation}
\begin{equation} \label{eq:outlier eigenvalue proof 2}
	\bbone (\Xi_{1}) \left \| V_{\mathrm{s}}^{\top} \left ( \widetilde{G}(\widetilde{\lambda}) - \Pi_{\widetilde{G}}(\widetilde{\lambda}) \right ) V_{\mathrm{s}} \right \| \le \left ( \kappa(\widetilde{\lambda}) + \kappa(\widetilde{\lambda})^{1/4} \right )^{-1} n^{-1/2 + \epsilon/2} \quad \text{for } \widetilde{\lambda} \in \bbR \setminus \cI_{0},
\end{equation}
where $ \widetilde{G} $ and $ \Pi_{\widetilde{G}} $ are defined in \eqref{eq:G and counterpart of G}. From now on, we will fix a realization in $ \Xi_{1} $, and the proof will be entirely deterministic. By \eqref{eq:outlier eigenvalue proof 1} and Lemma \ref{lem:perturb 1}, $ \widetilde{\lambda} \notin \cI_{0} $ is an eigenvalue of $ \widetilde{Q} $ if and only if
\begin{align*}
	&  \det \left ( \phi^{1/2} \sfD_{\textrm{s}}^{-1} + V_{\textrm{s}}^{\top} \widetilde{G}(\widetilde{\lambda}) V_{\textrm{s}} \right ) \\
	= \ & \det \left ( \phi^{1/2} \sfD_{\textrm{s}}^{-1} + V_{\textrm{s}}^{\top} \Pi_{\widetilde{G}}(\widetilde{\lambda}) V_{\textrm{s}} + \mathrm{O} \left ( \left ( \kappa(\widetilde{\lambda}) + \kappa(\widetilde{\lambda})^{1/4} \right )^{-1} n^{-1/2 + \epsilon/2} \right ) \right ) \\
	= \ & \det \left ( \phi^{1/2} \sfD_{\textrm{s}}^{-1} + \frac{m(\widetilde{\lambda}) \Lambda_{\mathrm{s}}}{1 + \phi^{-1/2} m(\widetilde{\lambda}) \Lambda_{\mathrm{s}}} + \mathrm{O} \left ( \left ( \kappa(\widetilde{\lambda}) + \kappa(\widetilde{\lambda})^{1/4} \right )^{-1} n^{-1/2 + \epsilon/2} \right ) \right ) \\
	= \ & 0.
\end{align*}
Therefore, it suffices to show that for $ \widetilde{\lambda} \notin \cI $, 
\begin{equation} \label{eq:outlier eigenvalue proof 3}
	\min_{i \in \llb r \rrb} \left | \phi^{1/2} \sfd_{i}^{-1} + \frac{m(\widetilde{\lambda}) \sigma_{i}}{1+\phi^{-1/2} m(\widetilde{\lambda}) \sigma_{i}} \right | \gg \left ( \kappa(\widetilde{\lambda}) + \kappa(\widetilde{\lambda})^{1/4} \right )^{-1} n^{-1/2 + \epsilon/2}.
\end{equation}
Define $ h_{i}(x) = \phi^{1/2} d_{i}^{-1} + \frac{m(x) \sigma_{i}}{1 + \phi^{-1/2} m(x) \sigma_{i}} $ for $ i \in \llb r \rrb $. By using $ m(f(x)) = x $, it is easy to verify that $ h_{i}(\mathfrak{a}_{i}) = 0 $ and that $ h_{i}(x) $ is strictly increasing for $ x > \gamma_{+} $. We now consider two cases to prove \eqref{eq:outlier eigenvalue proof 3}: (i) $ \kappa(\widetilde{\lambda}) \gtrsim 1 $; (ii) $ n^{-2/3+2\epsilon} \le \kappa(\widetilde{\lambda}) \ll 1 $.

\begin{enumerate}[label = (\roman*)]
	\item By monotonicity of $ h_{i} $ above, it suffices to show that $ |h_{i}(\mathfrak{a}_{i} \pm n^{-1/2+\epsilon})| \gg n^{-1/2 + \epsilon/2} $. Note that for $ x \in \cI_{i} $,
	\begin{equation*}
		h_{i}'(x) = \frac{m'(x) \sigma_{i}}{\left (1 + \phi^{-1/2} m(x) \sigma_{i} \right )^{2}} \asymp m'(x) \asymp \frac{1}{f'(m(x))},
	\end{equation*}
	where we use \eqref{eq:spectrum of Sigma0} and \eqref{eq:estimate of m} for the second step and \eqref{eq:self-consistent equation} for the third step. If one can show that $ f'(m(x)) \asymp 1 $ for $ x \in \cI_{i} $, then \eqref{eq:outlier eigenvalue proof 3} follows by the mean value theorem since
	\[ |h_{i}(\mathfrak{a}_{i} \pm n^{-1/2+\epsilon}) | \asymp n^{-1/2 + \epsilon} \gg n^{-1/2 + \epsilon/2}. \]
	It remains to show that $ f'(m(x)) \asymp 1 $ for $ x \in \cI_{i} $. Since $ x \asymp \mathfrak{a}_{i} = f(-\phi^{1/2} \widetilde{\sigma}_{i}^{-1}) $, the mean value theorem with \eqref{eq:self-consistent equation} and \eqref{eq:spike1} gives that
	\[ f'(m(x)) = f'(m(\gamma_{+})) + f''(\xi') (m(x) - m(\gamma_{+})) \asymp 1, \]
	for some $ \xi' \in (m(\gamma_{+}), m(x)) $.
	\item Analogous arguments to (i) shows that $ h_{i}(x) \gtrsim 1 $ for $ \widetilde{\lambda} > \gamma_{+} $ with $ n^{-2/3+2\epsilon} \le \kappa(\widetilde{\lambda}) \ll 1 $. Thus, \eqref{eq:outlier eigenvalue proof 3} follows since the right-hand side of \eqref{eq:outlier eigenvalue proof 3} is of order $ \kappa(\widetilde{\lambda})^{-1/4} n^{-1/2 + \epsilon/2} \le n^{-1/3} $ at most.
\end{enumerate}

Finally, we prove that each $ \mathcal{I}_{i} $, $ i \in \llb r \rrb $, contains exactly one eigenvalue of $ \widetilde{Q} $. Since $ \mathfrak{a}_{i} $ are separated by a constant-order gap for sufficiently large $ n $, as shown in \eqref{eq:a_i disjoint}, we can choose a positively oriented circle $ \cC_{i} $ centered at $ \mathfrak{a}_{i} $ with radius of order one such that it contains no other $ \mathfrak{a}_{j} $, $ j \neq i $. Define
\[ \cM(z) := \det \left ( \phi^{1/2} \sfD_{\textrm{s}}^{-1} + V_{\textrm{s}}^{\top} \widetilde{G}(z) V_{\textrm{s}} \right ), \quad  \cL(z) := \det \left ( \phi^{1/2} \sfD_{\textrm{s}}^{-1} + \frac{m(z) \Lambda_{\mathrm{s}}}{1 + \phi^{-1/2} m(z) \Lambda_{\mathrm{s}}} \right ). \]
By following the arguments above, we have that on the event $ \Xi_{0} $, there exists $ \delta > 0 $ such that
\[ \min_{z \in \cC_{i}} | \cL(z) | \ge \delta > 0 \quad \text{and} \quad |\cM(z) - \cL(z)| \le n^{-1/2 + \epsilon / 2}. \]
Since $ \mathfrak{a}_{i} $ is the only zero of $ \cL $ inside $ \cC_{i} $, Rouché's theorem gives that $ \mathsf{I}_{j} $ contains exactly one eigenvalue of $ \widetilde{Q} $. This completes the proof of \eqref{eq:outlier eigenvalue}.

\noindent \textbf{Proof of Theorem \ref{thm:eigenvalue} (ii)}. To prove \eqref{eq:non-outlier eigenvalue}, we establish the eigenvalue sticking estimate
\begin{equation} \label{eq:eigenvalue sticking}
	\left | \widetilde{\lambda}_{i} - \lambda_{i-r} \right | \prec n^{-1}.
\end{equation}
Together with the rigidity estimate in Lemma \ref{lem:rigidity}, \eqref{eq:non-outlier eigenvalue} follows from \eqref{eq:eigenvalue sticking}. The rest of this subsection is devoted to the proof of \eqref{eq:eigenvalue sticking}.

Let $ \epsilon > 0 $ be a sufficiently small fixed constant such that $ \gamma_{+} + 1 \in \bfD \equiv \bfD(\epsilon) $, where $\bfD$ is defined in \eqref{eq:spectral domain}. Note that this is possible due to Lemma \ref{lem:properties of rho and f} (ii). Then, from Lemma \ref{lem:rigidity}, Lemma \ref{lem:G local law} (i) with Remark \ref{rem:simultaneous bound}, and Theorem \ref{thm:eigenvalue} (i) with Lemma \ref{lem:interlacing}, there exists a high probability event $ \Xi_{2} \equiv \Xi_{2}(\epsilon) $ such that
\begin{gather} 
	\bbone (\Xi_{2}) |\lambda_{i} - \gamma_{i}| \le ( \min \{i, n+1-i\} )^{-1/3} n^{-2/3 + \epsilon/2}, \label{eq:bulk eigenvalue proof 1} \\
	\bbone (\Xi_{2}') \left \| V_{\mathrm{s}}^{\top} \left ( \widetilde{G}(z) - \Pi_{\widetilde{G}}(z) \right ) V_{\mathrm{s}} \right \| \le n^{\epsilon/2} \Psi(z) \text{ for } z \in \bfD, \label{eq:bulk eigenvalue proof 2} \\
	\bbone (\Xi_{2}) |\widetilde{\lambda}_{r+1} - \gamma_{+}| \le n^{-2/3 + \epsilon/2}, \label{eq:bulk eigenvalue proof 3}
\end{gather}
where we recall the definition of $ \Psi(z) $ in \eqref{eq:Psi}. We fix a realization in $ \Xi_{2} $.

For $ \min \{i, n+1-i\} > n^{1 - 2\epsilon} $, \eqref{eq:eigenvalue sticking} follows from Lemma \ref{lem:rigidity}, Lemma \ref{lem:interlacing}, and \eqref{eq:bulk eigenvalue proof 1}, since
\[ | \widetilde{\lambda}_{i} - \lambda_{i-r} | \le | \lambda_{i} - \lambda_{i-r} | \le C ( \min \{i, n+1-i\} )^{-1/3} n^{-2/3 + \epsilon} \le C n^{-1 + 2\epsilon}. \]

Therefore, we now consider the case $ \min \{i, n+1-i\} \le n^{1 - 2\epsilon} $. By symmetry, we may assume that $ i \le n^{1-2\epsilon} $. Define
\begin{equation} \label{eq:J_i}
	\mathcal{J}_{i} := \left \{ \widetilde{\lambda} \in \left [ \lambda_{i-r-1}, \ \gamma_{+} + n^{-2/3 + 2 \epsilon} \right ]: \operatorname{dist}(x, \sigma(Q_{0})) > n^{-1+2\epsilon} \right \},
\end{equation}
where $ \sigma(Q_{0}) $ is the spectrum of $ Q_{0} $. We claim that $ \mathcal{J}_{i} $ contains no eigenvalue of $ \widetilde{Q} $. To show this, let $ \eta_{0} = n^{-1+2\epsilon} $. By Lemma \ref{lem:perturb 1}, $ \widetilde{\lambda} \in \cJ_{i} $ is an eigenvalue of $ \widetilde{Q} $ if and only if 
\begin{equation} \label{eq:bulk eigenvalue proof det}
	\begin{aligned}
		0 &= \det \left (\phi^{1/2} \sfD_{\mathrm{s}}^{-1} + V_{\mathrm{s}}^{\top} \widetilde{G}(\widetilde{\lambda}) V_{\mathrm{s}} \right ) \\
		&= \det \bigg (\phi^{1/2} \sfD_{\mathrm{s}}^{-1} + V_{\mathrm{s}}^{\top} \Pi_{\widetilde{G}}(\widetilde{\lambda}) V_{\mathrm{s}} + V_{\mathrm{s}}^{\top} \left ( \Pi_{\widetilde{G}}(\widetilde{\lambda} + \i \eta_{0}) - \Pi_{\widetilde{G}}(\widetilde{\lambda}) \right ) V_{\mathrm{s}} \\
		& \qquad \qquad \qquad + V_{\mathrm{s}}^{\top} \left ( \widetilde{G}(\widetilde{\lambda} + \i \eta_{0}) - \Pi_{\widetilde{G}}(\widetilde{\lambda} + \i \eta_{0}) \right ) V_{\mathrm{s}} + V_{\mathrm{s}}^{\top} \left ( \widetilde{G}(\widetilde{\lambda}) - \widetilde{G}(\widetilde{\lambda} + \i \eta_{0}) \right ) V_{\mathrm{s}} \bigg ).
	\end{aligned}
\end{equation}
Note that for any $ j, k \in \llb r \rrb $, we have that
\begin{equation} \label{eq:bulk eigenvalue proof inter 1}
	\left | \bfv_{j}^{\top} \left ( \Pi_{\widetilde{G}}(\widetilde{\lambda} + \i \eta_{0}) - \widetilde{G}(\widetilde{\lambda}) \right ) \bfv_{k} \right | \lesssim \im m(\widetilde{\lambda} + \i \eta_{0}),
\end{equation}
which is from Lemma \ref{lem:basic estimates of m} and the definition of $ \cJ_{i} $, and that
\begin{align}
	\left | \bfv_{j}^{\top} \left ( \widetilde{G}(\widetilde{\lambda} + \i \eta_{0}) - \widetilde{G}(\widetilde{\lambda}) \right ) \bfv_{k} \right | &= \phi^{1/2} \left | \widetilde{\lambda} \bfv_{j}^{\top} \left ( R_{0}(\widetilde{\lambda} + \i \eta_{0}) - R_{0}(\widetilde{\lambda}) \right ) \bfv_{k} + \i \eta_{0} \bfv_{j}^{\top} R_{0}(\widetilde{\lambda} + \i \eta_{0}) \bfv_{k}  \right | \notag \\
	&\le C \phi \max_{j \in \llb r \rrb} \im \bfv_{j}^{\top} R_{0}(\widetilde{\lambda} + \i \eta_{0}) \bfv_{j} \notag \\
	&\le C \phi \max_{j \in \llb r \rrb} \im \bfv_{j}^{\top} \Pi_{R_{0}}(\widetilde{\lambda} + \i \eta_{0}) \bfv_{j} + n^{\epsilon/2} \Psi(\widetilde{\lambda} + \i \eta_{0}) \notag \\
	&\le C \eta_{0} + n^{\epsilon/2} \Psi (\widetilde{\lambda} + \i \eta_{0}) \notag \\
	&\le n^{\epsilon/2} \Psi (\widetilde{\lambda} + \i \eta_{0}), \label{eq:bulk eigenvalue proof inter 2}
\end{align}
where we use \eqref{eq:bulk eigenvalue proof 2} for the third step. From \eqref{eq:bulk eigenvalue proof 2}, \eqref{eq:bulk eigenvalue proof inter 1}, \eqref{eq:bulk eigenvalue proof inter 2}, and the estimate $ \Psi(E+\i\eta) \lesssim \im m(E+\i\eta) + (n\eta)^{-1} $, \eqref{eq:bulk eigenvalue proof det} can be rewritten as
\[ 0 = \det \left (\phi^{1/2} \sfD_{\mathrm{s}}^{-1} + V_{\mathrm{s}}^{\top} \widetilde{G}(\widetilde{\lambda}) V_{\mathrm{s}} \right ) = \det \left (\phi^{1/2} \sfD_{\mathrm{s}}^{-1} + V_{\mathrm{s}}^{\top} \Pi_{\widetilde{G}}(\widetilde{\lambda}) V_{\mathrm{s}} + \mathrm{O} \left ( n^{\epsilon/2} \im m(\widetilde{\lambda} + \i \eta_{0}) + \frac{n^{\epsilon/2}}{n\eta_{0}} \right ) \right ). \]
Therefore, $ \widetilde{\lambda} \in \cJ_{i} $ is not an eigenvalue of $ \widetilde{Q} $ if one can show that
\begin{equation} \label{eq:bulk eigenvalue proof final}
	\min_{j \in \llb r \rrb} \left | \phi^{1/2} \sfd_{j}^{-1} + \frac{m(\widetilde{\lambda}) \sigma_{j}}{1+\phi^{-1/2} m(\widetilde{\lambda}) \sigma_{j}} \right | \gg n^{\epsilon/2} \left ( \im m(\widetilde{\lambda} + \i \eta_{0}) + \frac{1}{n\eta_{0}} \right ).
\end{equation}

To estimate the left-hand side of \eqref{eq:bulk eigenvalue proof final}, observe that
\begin{align*}
	\left | \phi^{1/2} \sfd_{j}^{-1} + \frac{m(\widetilde{\lambda}) \sigma_{j}}{1+\phi^{-1/2} m(\widetilde{\lambda}) \sigma_{j}} \right | = \left | \frac{\widetilde{\sigma}_{j} \left ( m(\widetilde{\lambda}) + \phi^{1/2} \widetilde{\sigma}_{j}^{-1} \right )}{\sfd_{j} \left ( 1 + \phi^{-1/2} m(\widetilde{\lambda}) \sigma_{j} \right )} \right | \asymp | m(\widetilde{\lambda}) - m(\fa_{j}) |.
\end{align*}
It is relatively easy to show that for all $ j \in \llb r \rrb $
\begin{equation} \label{eq:bulk eigenvalue proof final 2}
	| m(\widetilde{\lambda}) - m(\fa_{j}) | \ge c_{0} \text{ if } \widetilde{\lambda} \ge \gamma_{+}
\end{equation}
for some fixed constant $ c_{0} $ from the similar argument to \eqref{eq:a_i separate from support}. Therefore, we focus on the case $ \widetilde{\lambda} < \gamma_{+} $. From the definition of $ \cJ_{i} $, 
\begin{equation} \label{eq:bulk eigenvalue proof kappa}
	\gamma_{+} - \widetilde{\lambda} \le \gamma_{+} - \lambda_{i-r-1} \le | \gamma_{+} - \gamma_{i-r-1}| + | \gamma_{i-r-1} - \lambda_{i-r-1}| \le \left ( \frac{i}{n} \right )^{2/3} + n^{-2/3 + \epsilon/2} \le n^{-4\epsilon/3},
\end{equation}
where we use the square-root behavior and \eqref{eq:bulk eigenvalue proof 1} for the second step.
Then, the square-root behavior gives
\begin{equation}
	| m(\gamma_{+}) - m(\widetilde{\lambda}) | \asymp (\gamma_{+} - \widetilde{\lambda})^{1/2} \le n^{-2\epsilon/3}.
\end{equation}
Combining this with \eqref{eq:bulk eigenvalue proof final 2}, we have
\begin{equation} \label{eq:bulk eigenvalue proof final 3}
	| m(\widetilde{\lambda}) - m (\fa_{j}) | \ge | m(\gamma_{+}) - m(\fa_{j}) | - | m(\gamma_{+}) - m(\widetilde{\lambda}) | \ge c_{0}/2.
\end{equation}
From \eqref{eq:bulk eigenvalue proof final 2} and \eqref{eq:bulk eigenvalue proof final 3}, we have that for $ \widetilde{\lambda} \in \cJ_{i} $
\begin{equation} \label{eq:bulk eigenvalue proof final 4}
	\min_{j \in \llb r \rrb} \left | \phi^{1/2} \sfd_{j}^{-1} + \frac{m(\widetilde{\lambda}) \sigma_{j}}{1+\phi^{-1/2} m(\widetilde{\lambda}) \sigma_{j}} \right | \gtrsim 1.
\end{equation}

Next, we estimate the right-hand side of \eqref{eq:bulk eigenvalue proof final}. From Lemma \ref{lem:basic estimates of m}, \eqref{eq:bulk eigenvalue proof kappa} and the choice of $ \eta_{0} $, for $ \widetilde{\lambda} \le \gamma_{+} $, we have
\begin{equation} \label{eq:bulk eigenvalue proof final 5}
	n^{\epsilon/2} \left ( \im m(\widetilde{\lambda} + \i \eta_{0}) + \frac{1}{n\eta_{0}} \right ) \asymp n^{\epsilon/2} \left ( \sqrt{\kappa(\widetilde{\lambda}) + \eta_{0}} + \frac{1}{n\eta_{0}} \right ) \le n^{\epsilon/2} \left ( (\gamma_{+} - \widetilde{\lambda})^{1/2} + \eta_{0}^{1/2} + \frac{1}{n\eta_{0}} \right ) \ll 1.
\end{equation}
Similarly, from Lemma \ref{lem:basic estimates of m}, the estimate $ \eta_{0}/\sqrt{\kappa+\eta_{0}} \lesssim \sqrt{\eta_{0}} $, and the choice of $ \eta_{0} $, for $ \widetilde{\lambda} > \gamma_{+} $, we have
\begin{equation} \label{eq:bulk eigenvalue proof final 6}
	n^{\epsilon/2} \left ( \im m(\widetilde{\lambda} + \i \eta_{0}) + \frac{1}{n\eta_{0}} \right ) \asymp n^{\epsilon/2} \left ( \frac{\eta_{0}}{\sqrt{\kappa(\widetilde{\lambda}) + \eta_{0}}} + \frac{1}{n\eta_{0}} \right ) \lesssim n^{\epsilon/2} \left ( \eta_{0}^{1/2} + \frac{1}{n\eta_{0}} \right ) \ll 1.
\end{equation}
\eqref{eq:bulk eigenvalue proof final} follows by \eqref{eq:bulk eigenvalue proof final 4}-\eqref{eq:bulk eigenvalue proof final 6}.

The next step is the counting argument as in the previous proof. Since it is essentially same as the one discussed in \cite[Proposition 6.8]{ky13}, we omit the details.

\subsection{Proof of Theorem \ref{thm:eigenvector}} \label{app:spiked eigenvector proof}
In the proof, we fix $ i $. \\
\noindent\textbf{Proof of Theorem \ref{thm:eigenvector} (i)}. By \eqref{eq:spike1}, there exists some fixed small constant $ \tau \equiv \tau(\varpi) > 0 $ such that 
\begin{equation} \label{eq:choice of tau}
	f \left ( -\phi^{1/2} \widetilde{\sigma}_{i}^{-1} + \frac{\varpi}{3} \right ) < \gamma_{+} + \tau^{-1}
\end{equation}
for sufficiently large $ n $. We start with $ j \in \llb r \rrb $. From Lemma \ref{lem:rigidity}, Lemma \ref{lem:G local law} (ii) with Remark \ref{rem:simultaneous bound}, and Theorem \ref{thm:eigenvalue},  for a fixed small constant $ \epsilon > 0 $, there exists a high probability event $ \Xi \equiv \Xi(\epsilon, \varpi) $ such that
\begin{gather}
	\bbone(\Xi) \lambda_{1} \le \gamma_{+} + n^{-2/3 + \epsilon}, \label{eq:outlier eigenvector 1 proof 1} \\
	\bbone(\Xi) \left \| V_{\mathrm{s}}^{\top} \left ( \widetilde{G}(\widetilde{\lambda}) - \Pi_{\widetilde{G}}(\widetilde{\lambda}) \right ) V_{\mathrm{s}} \right \| \le n^{-1/2 + \epsilon} \quad \text{for all } z \in \bfD_{\mathrm{o}} \equiv \bfD_{\mathrm{o}}(\tau), \label{eq:outlier eigenvector 1 proof 2} \\
	\bbone(\Xi) \left | \widetilde{\lambda}_{i} - \mathfrak{a}_{i} \right | \le n^{-1/2 + \epsilon} \quad \text{for all } i \in \llb r \rrb, \label{eq:outlier eigenvector 1 proof 3}
\end{gather}
where we recall the definition of $ \bfD_{\mathrm{o}}(\tau) $ in \eqref{eq:spectral domain outside}. From now on, we fix a realization in $ \Xi $. 

Let $ \Gamma_{j} $ be the circular contour of radius $ \varpi/3 $ centered at $ -\phi^{1/2} \widetilde{\sigma}_{j}^{-1} $. By \eqref{eq:outlier eigenvector 1 proof 3} and the similar argument to \eqref{eq:a_i disjoint}, we have that 
\begin{equation}
	\widetilde{\lambda}_{j} \in f(\Gamma_{i}) \text{ if and only if } j = i, \label{eq:outlier eigenvector 1 proof contour 1}
\end{equation}
From the choice of $ \tau $ in \eqref{eq:choice of tau}, we also have
\begin{equation}
	f(\Gamma_{i}) \subset \bfD_{\mathrm{o}}. \label{eq:outlier eigenvector 1 proof contour 2}
\end{equation}
We now provide the integral representation of $ \langle \widetilde{\bfu}_{i}, \bfv_{j} \rangle^{2} $. By Cauchy's integral theorem and the residue theorem with \eqref{eq:outlier eigenvector 1 proof contour 1} and the spectral decomposition of $ \widetilde{R}(z) $ defined in \eqref{eq:R tilde},
\[ \langle \widetilde{\bfu}_{i}, \bfv_{j} \rangle^{2} = - \frac{1}{2\pi \i} \oint_{f(\Gamma_{i})} \left  \langle \bfv_{j}, \widetilde{R}(z) \bfv_{j} \right \rangle \dif z. \]
Moreover, from Lemma \ref{lem:perturb 2} and Cauchy's integral theorem,
\[ \langle \widetilde{\bfu}_{i}, \bfv_{j} \rangle^{2} = \frac{1}{2\pi \i} \frac{1+\sfd_{j}}{\sfd_{j}^{2}} \oint_{f(\Gamma_{i})} \left ( \sfD_{\mathrm{s}}^{-1} + \phi^{-1/2} V_{\mathrm{s}}^{\top} \widetilde{G}(z) V_{\mathrm{s}} \right )_{jj}^{-1} \frac{\dif z}{z}. \]
Define a deterministic diagonal matrix $ \Upsilon(z) \in \bbC^{r \times r} $ and a random error matrix $ \Delta(z) \in \bbC^{r \times r} $ by
\begin{align}
	\Upsilon &\equiv \Upsilon(z) := \sfD_{\mathrm{s}}^{-1} + \phi^{-1/2} V_{\mathrm{s}}^{\top} \Pi_{\widetilde{G}}(z) V_{\mathrm{s}} = \sfD_{\mathrm{s}}^{-1} + \frac{\phi^{-1/2} m(z) \Lambda_{\mathrm{s}}}{1+\phi^{-1/2} m(z) \Lambda_{\mathrm{s}}}, \label{eq:outlier eigenvector 1 leading} \\
	\Delta &\equiv \Delta(z) := \phi^{-1/2} V_{\mathrm{s}}^{\top} \left ( \Pi_{\widetilde{G}}(z) - \widetilde{G}(z) \right ) V_{\mathrm{s}} = \frac{\phi^{-1/2} m(z) \Lambda_{\mathrm{s}}}{1+\phi^{-1/2} m(z) \Lambda_{\mathrm{s}}} - \phi^{-1/2} V_{\mathrm{s}}^{\top} \widetilde{G}(z) V_{\mathrm{s}}. \label{eq:outlier eigenvector 1 error}
\end{align}
From the resolvent expansion
\begin{equation} \label{eq:resolvent expansion}
	( \Upsilon - \Delta )^{-1} = \Upsilon^{-1} + ( \Upsilon - \Delta )^{-1} \Delta \Upsilon^{-1},
\end{equation}
we can  write
\[ \langle \widetilde{\bfu}_{i}, \bfv_{j} \rangle^{2} = s_{1} + s_{2}, \]
where
\begin{align*}
	s_{1} &= \frac{1}{2\pi \i} \frac{1+\sfd_{j}}{\sfd_{j}^{2}} \oint_{f(\Gamma_{i})} \left [ \Upsilon(z)^{-1} \right ]_{jj} \frac{\dif z}{z}, \\
	s_{2} &= \frac{1}{2\pi \i} \frac{1+\sfd_{j}}{\sfd_{j}^{2}} \oint_{f(\Gamma_{i})} \left [ \left ( \sfD_{\mathrm{s}}^{-1} + \phi^{-1/2} V_{\mathrm{s}}^{\top} \widetilde{G}(z) V_{\mathrm{s}} \right )^{-1} \Delta(z) \Upsilon^{-1}(z) \right ]_{jj} \frac{\dif z}{z}.
\end{align*}
	
For $ s_{1} $, notice that
\begin{equation} \label{eq:diagonal of Upsilon inverse}
	\left  [ \Upsilon(z)^{-1} \right  ]_{jj} = \left ( \sfd_{j}^{-1} + \frac{\phi^{-1/2} m(z) \sigma_{j}}{1 + \phi^{-1/2} m(z) \sigma_{j}} \right )^{-1} = \frac{\sfd_{j}}{1+\sfd_{j}} \cdot \frac{m(z) + \phi^{1/2} \sigma_{j}^{-1}}{m(z) + \phi^{1/2} \widetilde{\sigma}_{j}^{-1}}.
\end{equation}
With $ z = f(\zeta) $ and \eqref{eq:self-consistent equation}, $ s_{1} $ can be written as
\[ s_{1} = \frac{1}{2\pi \i} \frac{1}{\sfd_{j}} \oint_{\Gamma_{i}} \frac{\zeta + \phi^{1/2} \sigma_{j}^{-1}}{\zeta + \phi^{1/2} \widetilde{\sigma}_{j}^{-1}} \frac{f'(\zeta)}{f(\zeta)} \dif \zeta. \]
The integrand is holomorphic if $ j \neq i $ and has a simple pole at $ -\phi^{1/2} \widetilde{\sigma}_{i}^{-1} $ if $ j = i $ by the construction of $ \Gamma_{i} $ above. Therefore, Cauchy's integral theorem and the residue theorem give that
\begin{equation} \label{eq:s1}
	s_{1} = \delta_{ij} \phi^{1/2} \widetilde{\sigma}_{i}^{-1} \frac{f'(-\phi^{1/2} \widetilde{\sigma}_{i}^{-1})}{f(-\phi^{1/2} \widetilde{\sigma}_{i}^{-1})} = \delta_{ij} \mathfrak{b}_{i}.
\end{equation}

Similarly, we can write
\[ s_{2} = \frac{1}{2\pi \i} \frac{1+\sfd_{j}}{\sfd_{j}^{2}} \oint_{\Gamma_{i}} \left [ \left ( \sfD_{\mathrm{s}}^{-1} +  \phi^{-1/2} V_{\mathrm{s}}^{\top} \widetilde{G}(f(\zeta)) V_{\mathrm{s}} \right )^{-1} \Delta(f(\zeta)) \Upsilon^{-1}(f(\zeta)) \right ]_{jj} \frac{f'(\zeta)}{f(\zeta)} \dif \zeta. \]
From \eqref{eq:spectrum of Sigma0}, \eqref{eq:spike1}, \eqref{eq:diagonal of Upsilon inverse}, and \eqref{eq:outlier eigenvector 1 proof 2}, we have
\begin{equation} \label{eq:outlier eigenvector 1 error proof 1}
	\left [ \Upsilon(f(\zeta))^{-1} \right ]_{jj} \asymp \phi^{1/2} \text{ and } \left \| \sfD_{\mathrm{s}}^{-1} +  \phi^{-1/2} V_{\mathrm{s}}^{\top} \widetilde{G}(f(\zeta)) V_{\mathrm{s}} \right \|^{-1} \asymp \phi^{1/2} \text{ for } \zeta \in \Gamma_{i}.
\end{equation}
Therefore,
\begin{equation} \label{eq:s2}
	|s_{2}| \le C \phi^{-1/2} n^{-1/2 + \epsilon} = C p^{-1/2} n^{\epsilon},
\end{equation}
where we use \eqref{eq:outlier eigenvector 1 proof 2}, \eqref{eq:outlier eigenvector 1 proof contour 2}, \eqref{eq:outlier eigenvector 1 error}, and the basic estimates $ \sfd_{j}, f(\zeta) \asymp \phi^{1/2} $ and $ f'(\zeta) \asymp 1 $. By \eqref{eq:s1} and \eqref{eq:s2}, we complete the proof of \eqref{eq:outlier eigenvector} for $ j \in \llb r \rrb $.

Now, we consider $ j \in \llb r+1, p \rrb $. We hereby prove the following for the future use
\begin{equation} \label{eq:outlier eigenvector stronger}
	\left | \langle \widetilde{\mathbf{u}}_{i}, \mathbf{v}_{j} \rangle^{2} - \frac{\mathsf{d}_{i}^{2} \mathfrak{b}_{i}}{\phi (1+\sfd_{i})} \langle \bfv_{j}, \widetilde{G}(\mathfrak{a}_{i}) \bfv_{i} \rangle \langle \bfv_{i}, \widetilde{G}(\mathfrak{a}_{i}) \bfv_{j} \rangle \right | = \mathrm{O}_{\prec} \left ( p^{-1} n^{-1/2} \right ).
\end{equation}
If \eqref{eq:outlier eigenvector stronger} holds, then \eqref{eq:outlier eigenvector} for $ j \in \llb r+1, p \rrb $ follows by Lemma \ref{lem:G local law} (iii). The rest is devoted to prove \eqref{eq:outlier eigenvector stronger}. For $ \delta > 0 $, define
\begin{gather*}
	\sfD_{j, \delta} := \diag{ \sfd_{1}, \dots, \sfd_{r}, \delta } \in \bbR^{(r+1) \times (r+1)}, \\
	\Lambda_{j} := \diag{\sigma_{1}, \dots, \sigma_{r}, \sigma_{j}} \in \bbR^{(r+1) \times (r+1)}, \\
	V_{j} := [ \bfv_{1}, \cdots, \bfv_{r}, \bfv_{j} ] \in \bbR^{p \times (r+1)}, \\
	\widetilde{\Sigma}_{j, \delta} := \widetilde{\Sigma} + \delta \sigma_{j} \bfv_{j} \bfv_{j}^{\top} \in \bbR^{p \times p}.
\end{gather*}
Similar to \eqref{eq:outlier eigenvector 1 proof 1}-\eqref{eq:outlier eigenvector 1 proof 3}, we consider a high probability event $ \Xi' \equiv \Xi'(\epsilon, \varpi) $ such that
\begin{gather}
	\bbone(\Xi') \lambda_{1} \le \gamma_{+} + n^{-2/3 + \epsilon}, \label{eq:outlier eigenvector 2 proof 1} \\
	\bbone(\Xi') \left | V_{j}^{\top} \left ( \widetilde{G}(\widetilde{\lambda}) - \Pi_{\widetilde{G}}(\widetilde{\lambda}) \right ) V_{j} \right | \le n^{-1/2 + \epsilon} \quad \text{for all } z \in \bfD_{\mathrm{o}}, \label{eq:outlier eigenvector 2 proof 2} \\
	\bbone(\Xi') \left | \widetilde{\lambda}_{i} - \mathfrak{a}_{i} \right | \le n^{-1/2 + \epsilon} \quad \text{for all } i \in \llb r \rrb. \label{eq:outlier eigenvector 2 proof 3}
\end{gather}
We now fix a realization in $ \Xi' $ and consider
\[ \widetilde{R}_{j, \delta}(z) := \left (\widetilde{\Sigma}_{j, \delta}^{1/2} XX^{\top} \widetilde{\Sigma}_{j, \delta}^{1/2} - z \right )^{-1}. \]
As in Lemma \ref{lem:perturb 2}, $ V_{l}^{\top} \widetilde{R}_{l, \delta}(z) V_{l} $ can be written as
\begin{equation}
	V_{j}^{\top} \widetilde{R}_{j, \delta}(z) V_{j} = \frac{1}{z} \left [ \sfD_{j, \delta}^{-1} - \frac{(1 + \sfD_{j, \delta})^{1/2}}{\sfD_{j, \delta}} \left ( \sfD_{j, \delta}^{-1} + \phi^{-1/2} V_{j}^{\top} \widetilde{G}(z) V_{j} \right )^{-1} \frac{(1 + \sfD_{j, \delta})^{1/2}}{\sfD_{j, \delta}} \right ].
\end{equation}
Recall the contour $ \Gamma_{i} $ define above. Then,
\begin{align*}
	\langle \widetilde{\mathbf{u}}_{i}, \mathbf{v}_{j} \rangle^{2} &= -\frac{1}{2\pi \i} \oint_{f(\Gamma_{i})} \left \langle \bfv_{j}, \widetilde{R}(z) \bfv_{j} \right \rangle \dif z \\
	&= \lim_{\delta \downarrow 0} - \frac{1}{2\pi \i} \oint_{f(\Gamma_{i})} \left \langle \bfv_{j}, \widetilde{R}_{j, \delta}(z) \bfv_{j} \right \rangle \dif z \\
	&= \lim_{\delta \downarrow 0} \frac{1}{2\pi \i} \frac{1 + \delta}{\delta^{2}} \oint_{f(\Gamma_{i})} \left [ \left ( D_{l, \delta}^{-1} + V_{l}^{\top} \widetilde{G}(z) V_{l} \right )^{-1} \right ]_{r+1, r+1} \frac{\dif z}{z},
\end{align*}
where we use the residue theorem and the Cauchy's integral theorem.
Define $ \Upsilon_{j, \delta} $ and $ \Delta_{j} $ similarly to those in \eqref{eq:outlier eigenvector 1 leading} and \eqref{eq:outlier eigenvector 1 error}:
\begin{align}
	\Upsilon_{j, \delta} &\equiv \Upsilon_{j, \delta}(z) := \sfD_{j, \delta}^{-1} + \frac{\phi^{-1/2} m(z) \Lambda_{j}}{1 + \phi^{-1/2} m(z) \Lambda_{j}}, \label{eq:outlier eigenvector 2 leading} \\
	\Delta_{j} &\equiv \Delta_{j}(z) := \frac{\phi^{-1/2} m(z) \Lambda_{j}}{1 + \phi^{-1/2} m(z) \Lambda_{j}} - \phi^{-1/2} V_{j}^{\top} \widetilde{G}(z) V_{j}. \label{eq:outlier eigenvector 2 error}
\end{align}
Note that $ [\Upsilon_{j,\delta}(z)^{-1}]_{kk} $, $ k \in \llb r \rrb $, has the same form as in \eqref{eq:diagonal of Upsilon inverse}, and 
\begin{equation} \label{eq:diagonal of Upsilon inverse 2}
	[\Upsilon_{j,\delta}(z)^{-1}]_{r+1, r+1} = \frac{\delta}{1+\delta} \cdot \frac{m(z) + \phi^{1/2} \sigma_{j}^{-1}}{m(z) + \phi^{1/2} (1+\delta)^{-1} \sigma_{j}^{-1}}.
\end{equation}
From the expansion
\begin{equation}
	( \Upsilon_{j, \delta} - \Delta_{j} )^{-1} = \Upsilon_{j, \delta}^{-1} + \Upsilon_{j, \delta}^{-1} \Delta_{j} \Upsilon_{j, \delta}^{-1} + \Upsilon_{j, \delta}^{-1} \Delta_{j} \Upsilon_{j, \delta}^{-1} \Delta_{j} \Upsilon_{j, \delta}^{-1} + \Upsilon_{j, \delta}^{-1} \Delta_{j} \Upsilon_{j, \delta}^{-1} \Delta_{j} ( \Upsilon_{j, \delta} - \Delta_{j} )^{-1} \Delta_{j} \Upsilon_{j, \delta}^{-1}
\end{equation}
and the change of variable $ z = f(\zeta) $,
\[ \langle \widetilde{\mathbf{u}}_{i}, \mathbf{v}_{j} \rangle^{2} = t_{1} + t_{2} + t_{3} + t_{4}, \]
where
\begin{align*}
	t_{1} &= \lim_{\delta \downarrow 0} \frac{1}{2\pi \i} \frac{1+ \delta}{\delta^{2}} \oint_{\Gamma_{i}} \left [ \Upsilon_{j, \delta}(f(\zeta))^{-1} \right ]_{r+1, r+1} \frac{f'(\zeta)}{f(\zeta)} \dif \zeta, \\
	t_{2} &= \lim_{\delta \downarrow 0} \frac{1}{2\pi \i} \frac{1+ \delta}{\delta^{2}} \oint_{\Gamma_{i}} \left [ \Upsilon_{j, \delta}(f(\zeta))^{-1} \Delta_{j}(f(\zeta)) \Upsilon_{j, \delta}(f(\zeta))^{-1} \right ]_{r+1, r+1} \frac{f'(\zeta)}{f(\zeta)} \dif \zeta, \\
	t_{3} &= \lim_{\delta \downarrow 0} \frac{1}{2\pi \i} \frac{1+ \delta}{\delta^{2}} \oint_{\Gamma_{i}} \left [ \Upsilon_{j, \delta}(f(\zeta))^{-1} \Delta_{j}(f(\zeta)) \Upsilon_{j, \delta}(f(\zeta))^{-1} \Delta_{j}(f(\zeta)) \Upsilon_{j, \delta}(f(\zeta))^{-1} \right ]_{r+1, r+1} \frac{f'(\zeta)}{f(\zeta)} \dif \zeta, \\
	t_{4} &= \lim_{\delta \downarrow 0} \frac{1}{2\pi \i} \frac{1+ \delta}{\delta^{2}} \oint_{\Gamma_{i}} \bigg [ \Upsilon_{j, \delta}(f(\zeta))^{-1} \Delta_{j}(f(\zeta)) \Upsilon_{j, \delta}(f(\zeta))^{-1} \Delta_{j}(f(\zeta)) \\
	& \qquad \qquad \qquad \qquad \qquad \qquad \times \left ( \sfD_{j, \delta}^{-1} + \phi^{-1/2} V_{j} \widetilde{G}(f(\zeta)) V_{j} \right )^{-1} \Delta_{j}(f(\zeta)) \Upsilon_{j, \delta}(z)^{-1} \bigg ]_{r+1, r+1} \frac{f'(\zeta)}{f(\zeta)} \dif \zeta.
\end{align*}
By similar arguments that we used to show $ s_{1} = 0 $ above, we can show that $ \left [ \Upsilon_{j, \delta}(f(\zeta))^{-1} \right ]_{r+1, r+1} $ is holomorphic inside $ \Gamma_{i} $ for sufficiently small $ \delta > 0 $ from \eqref{eq:diagonal of Upsilon inverse 2}, and thus we have 
\begin{equation} \label{eq:t_1}
	t_{1} = 0.
\end{equation}
For $ t_{2} $, since
\[ \left [ \Upsilon_{j, \delta}(f(\zeta))^{-1} \Delta_{j}(z) \Upsilon_{j, \delta}(f(\zeta))^{-1} \right ]_{r+1, r+1} = \left [ \Upsilon_{j, \delta}(f(\zeta))^{-1} \right ]_{r+1, r+1}^{2} [ \Delta_{j}(f(\zeta)) ]_{r+1, r+1} \]
and $ \frac{\phi^{-1/2} \zeta \sigma_{j}}{1 + \phi^{-1/2} \zeta \sigma_{j}} $ is holomorphic in $ f(\Gamma_{i}) $ for sufficiently small $ \delta > 0 $, we also have
\begin{equation} \label{eq:t_2}
	t_{2} = 0.
\end{equation}

To estimate $ t_{3} $, notice that
\[ \left [ \Delta_{j}(f(\zeta)) \Upsilon_{j, \delta}(f(\zeta))^{-1} \Delta_{j}(f(\zeta)) \right ]_{r+1, r+1} = \sum_{k=1}^{r+1} \frac{[\Delta_{j}(f(\zeta))]_{r+1, k} [\Delta_{j}(f(\zeta))]_{k, r+1}}{[\Upsilon_{j, \delta}(f(\zeta))^{-1}]_{k,k}}. \]
Then, $ t_{3} $ can be rewritten as
\begin{align*}
	t_{3} &= \lim_{\delta \downarrow 0} \frac{1}{2\pi \i} \frac{1+ \delta}{\delta^{2}} \oint_{\Gamma_{i}} [\Upsilon_{j, \delta}(f(\zeta))^{-1}]_{r+1,r+1}^{2} \sum_{k=1}^{r+1} \frac{[\Delta_{j}(f(\zeta))]_{r+1, k} [\Delta_{j}(f(\zeta))]_{k, r+1}}{[\Upsilon_{j, \delta}(f(\zeta))]_{k,k}} \frac{f'(\zeta)}{f(\zeta)} \dif \zeta.
\end{align*}
Again, from the holomorphicity of $ {[\Upsilon_{j, \delta}(f(\zeta))^{-1}]_{k,k}} $ for $ k \neq i $, Cauchy's integral theorem gives that
\begin{align}
	t_{3} &= \lim_{\delta \downarrow 0} \frac{1}{2\pi \i} \frac{1+ \delta}{\delta^{2}} \oint_{\Gamma_{i}} [\Upsilon_{j, \delta}(f(\zeta))^{-1}]_{r+1,r+1}^{2} \frac{[\Delta_{j}(f(\zeta))]_{r+1, i} [\Delta_{j}(f(\zeta))]_{i, r+1}}{[\Upsilon_{j, \delta}(f(\zeta))]_{i,i}} \frac{f'(\zeta)}{f(\zeta)} \dif \zeta \notag \\
	&= \lim_{\delta \downarrow 0} \frac{1}{2\pi \i} \frac{1+ \delta}{\delta^{2}} \oint_{\Gamma_{i}} \Bigg [ \left ( \frac{\delta}{1+\delta} \cdot \frac{\zeta + \phi^{1/2} \sigma_{j}^{-1}}{\zeta + \phi^{1/2} \widetilde{\sigma}_{j}^{-1}} \right )^{2} \left ( \frac{\sfd_{i}}{1+\sfd_{i}} \cdot \frac{\zeta + \phi^{1/2} \sigma_{i}^{-1}}{\zeta + \phi^{1/2} \widetilde{\sigma}_{i}^{-1}} \right ) \notag \\
	& \qquad \qquad \qquad \qquad \qquad \times \phi^{-1} \langle \bfv_{j}, \widetilde{G}(f(\zeta)) \bfv_{i} \rangle \langle \bfv_{i}, \widetilde{G}(f(\zeta)) \bfv_{j} \rangle \frac{f'(\zeta)}{f(\zeta)} \Bigg ] \dif \zeta \notag \\
	&= \frac{\mathsf{d}_{i}^{2} \mathfrak{b}_{i}}{\phi (1+\sfd_{i})} \langle \bfv_{j}, \widetilde{G}(\mathfrak{a}_{i}) \bfv_{i} \rangle \langle \bfv_{i}, \widetilde{G}(\mathfrak{a}_{i}) \bfv_{j} \rangle, \label{eq:t_3}
\end{align}
where we use the residue theorem for the last step.

Finally, we establish an upper bound for $ |t_{4}| $. From \eqref{eq:estimate of m} and \eqref{eq:diagonal of Upsilon inverse 2}, 
\begin{equation} \label{eq:outlier eigenvector 2 error proof 1}
	\left [ \Upsilon_{j, \delta}(f(\zeta))^{-1} \right ]_{r+1, r+1} = \mathrm{O}(\delta)
\end{equation}
for $ \zeta \in \Gamma_{i} $ and sufficiently small $ \delta $. Furthermore, from \eqref{eq:diagonal of Upsilon inverse 2}, and \eqref{eq:outlier eigenvector 1 error proof 1}, there exists some constant $ C > 0 $ such that
\begin{equation} \label{eq:outlier eigenvector 2 error proof 2}
	\| \Upsilon_{j, \delta}(f(\zeta))^{-1} \| \le C \phi^{1/2}, \quad \left \| \left ( \sfD_{j, \delta}^{-1} + \phi^{-1/2} V_{j} \widetilde{G}(f(\zeta)) V_{j} \right )^{-1} \right \| \le C \phi^{1/2}
\end{equation}
for $ \zeta \in \Gamma_{i} $ and sufficiently small $ \delta $.
\eqref{eq:outlier eigenvector 2 proof 2}, \eqref{eq:outlier eigenvector 2 error proof 1}, \eqref{eq:outlier eigenvector 2 error proof 2}, and the basic estimates $ f'(\zeta) \asymp 1 $ and $ f(\zeta) \asymp \phi^{1/2} $ give that
\begin{equation} \label{eq:t_4}
	|t_{4}| \le C \phi^{-1} n^{-3/2 + 3 \epsilon} = C p^{-1} n^{-1/2 + 3 \epsilon}.
\end{equation}
By \eqref{eq:t_1}, \eqref{eq:t_2}, \eqref{eq:t_3}, and \eqref{eq:t_4}, we complete the proof of \eqref{eq:outlier eigenvector stronger}.

\noindent\textbf{Proof of Theorem \ref{thm:eigenvector} (ii)}. We first consider the case $ j \in \llb r \rrb $. Fix a sufficiently small $ \tau > 0 $. By , there exists a high probability event $ \Xi_{5} $ such that
\begin{gather}
	\bbone(\Xi_{5}) \left | \widetilde{\lambda}_{i} - \gamma_{i-r} \right | \prec ( \min \{ i, n+1-i \} )^{-1/3} n^{-2/3 + \epsilon}, \label{eq:outlier delocalization proof 1} \\
	\bbone(\Xi_{5}) \| V_{\mathrm{s}}^{\top} (\widetilde{G}(z) - \Pi_{\widetilde{G}}(z)) V_{\mathrm{s}} \| \le n^{\epsilon} \left ( \sqrt{\frac{\im m(z)}{n\eta}} + \frac{1}{n\eta} \right ) \quad \text{for all } z \in \bfD \equiv \bfD(\tau). \label{eq:outlier delocalization proof 2}
\end{gather}
We fix a realization in $ \Xi_{5} $.

Consider a strictly increasing mapping $ \eta \mapsto \eta \im m(\widetilde{\lambda}_{i} + \i \eta) $. Since $ \eta \im m (\widetilde{\lambda}_{i} + \i \eta) \asymp 1 $ for $ \eta \asymp 1 $ and $ n^{-1 + \epsilon} \im m(\widetilde{\lambda}_{i} + \i n^{-1 + \epsilon}) \lesssim n^{-1 + \epsilon} $ by Lemma \ref{lem:basic estimates of m} and \eqref{eq:outlier delocalization proof 1}, we can choose $ \eta_{i} > 0 $ such that 
\begin{equation} \label{eq:choice of eta_i}
	\im m(\widetilde{\lambda}_{i} + \i \eta_{i}) = \frac{n^{6\epsilon}}{n\eta_{i}}.
\end{equation}
Let $ z_{i} = \widetilde{\lambda}_{i} + \i\eta_{i} $. Then, \eqref{eq:outlier delocalization proof 2} for $ z_{i} $ can be rewritten as
\begin{equation} \label{eq:outlier delocalization proof 3}
	\left \| V_{\mathrm{s}}^{\top} \left ( \widetilde{G}(z_{i}) - \Pi_{\widetilde{G}}(z_{i}) \right ) V_{\mathrm{s}} \right \| \le \frac{n^{4\epsilon}}{n \eta}.
\end{equation}

As in \eqref{eq:delocalization proof}, we have 
\begin{equation} \label{eq:outlier delocalizatin ineq}
	\langle \widetilde{\bfu}_{i}, \bfv_{j} \rangle^{2} \le \eta_{i} \im \left ( \bfv_{j}^{\top} \widetilde{R}(z_{i}) \bfv_{j} \right ).
\end{equation}
Recall the definitions of $ \Upsilon $ and $ \Delta $ in \eqref{eq:outlier eigenvector 1 leading} and \eqref{eq:outlier eigenvector 1 error}. From Lemma \ref{lem:perturb 2}, \eqref{eq:diagonal of Upsilon inverse}, and the resolvent expansion
\[ (\Upsilon - \Delta)^{-1} = \Upsilon^{-1} + \Upsilon^{-1} \Delta \Upsilon^{-1} + \Upsilon^{-1} \Delta (\Upsilon - \Delta)^{-1} \Delta \Upsilon^{-1}, \]
we have that
\begin{equation}
	\begin{aligned}
		\bfv_{j}^{\top} \widetilde{R}(z_{i}) \bfv_{j} 
		&= \frac{1}{z_{i}} \left [ \frac{1}{\sfd_{j}} - \frac{1 + \sfd_{j}}{\sfd_{j}^{2}} \left ( \frac{\sfd_{j}}{1+\sfd_{j}} \cdot \frac{m(z_{i}) + \phi^{1/2} \sigma_{j}^{-1}}{m(z_{i}) + \phi^{1/2} \widetilde{\sigma}_{j}^{-1}} + \left ( \frac{\sfd_{j}}{1+\sfd_{j}} \cdot \frac{m(z_{i}) + \phi^{1/2} \sigma_{j}^{-1}}{m(z_{i}) + \phi^{1/2} \widetilde{\sigma}_{j}^{-1}} \right )^{2} \left ( \Delta + \Delta (\Upsilon - \Delta)^{-1} \Delta \right )_{jj} \right ) \right ] \\
		&= \frac{1}{z_{i}} \left [ -\frac{\phi^{1/2} \widetilde{\sigma}_{j}^{-1}}{m(z_{i}) + \phi^{1/2} \widetilde{\sigma}_{j}^{-1}} - \frac{1}{1+\sfd_{j}} \left ( \frac{m(z_{i}) + \phi^{1/2} \sigma_{j}^{-1}}{m(z_{i}) + \phi^{1/2} \widetilde{\sigma}_{j}^{-1}} \right )^{2} \left ( \Delta + \Delta (\Upsilon - \Delta)^{-1} \Delta \right )_{jj} \right ].
	\end{aligned}
\end{equation}
Combining this with \eqref{eq:outlier delocalizatin ineq}, we have
\begin{equation} \label{eq:outlier delocalization proof terms}
	\begin{aligned}
		\langle \widetilde{\bfu}_{i}, \bfv_{j} \rangle^{2} &\le \frac{\eta_{i}^{2}}{|z_{i}|^{2}} \re \left ( \frac{\phi^{1/2} \widetilde{\sigma}_{j}^{-1}}{m(z_{i}) + \phi^{1/2} \widetilde{\sigma}_{j}^{-1}} \right ) - \frac{\widetilde{\lambda}_{i} \eta_{i}}{|z_{i}|^{2}} \im \left ( \frac{\phi^{1/2} \widetilde{\sigma}_{j}^{-1}}{m(z_{i}) + \phi^{1/2} \widetilde{\sigma}_{j}^{-1}} \right ) \\
		& \qquad \qquad \qquad + \frac{\eta_{i} | m(z_{i}) + \phi^{1/2} \sigma_{j}^{-1} |^{2}}{|z_{i}| (1 + \sfd_{j}) | m(z_{i}) + \phi^{1/2} \widetilde{\sigma}_{j}^{-1} |^{2}} \left \| \Delta + \Delta (\Upsilon - \Delta)^{-1} \Delta \right \|.
	\end{aligned}
\end{equation}
From Assumption \ref{assump:main} (iv), the estimate $ |z_{i}| \asymp |\widetilde{\lambda}_{i}| \asymp \phi^{1/2} $, and \eqref{eq:choice of eta_i}, the first term of \eqref{eq:outlier delocalization proof terms} satisfies
\begin{equation} \label{eq:outlier delocalization proof term 1}
	\frac{\eta_{i}^{2}}{|z_{i}|^{2}} \re \left ( \frac{\phi^{1/2} \widetilde{\sigma}_{j}^{-1}}{m(z_{i}) + \phi^{1/2} \widetilde{\sigma}_{j}^{-1}} \right ) \le \frac{\eta_{i}^{2}}{|z_{i}|^{2}} \left | \frac{\phi^{1/2} \widetilde{\sigma}_{j}^{-1}}{m(z_{i}) + \phi^{1/2} \widetilde{\sigma}_{j}^{-1}} \right | \le C \phi^{-1} \frac{\eta_{i}^{2}}{\im m(z_{i})} \le C \phi^{-1} n \eta_{i}^{3} \le C p^{-1} n^{12\epsilon},
\end{equation}
where we use that $ \eta_{i} \le n^{-2/3 + 4\epsilon} $ for the last step, which can verified from \ref{lem:basic estimates of m} and \eqref{eq:choice of eta_i}.
Similarly, we have that
\begin{equation} \label{eq:outlier delocalization proof term 2}
	- \frac{\widetilde{\lambda}_{i} \eta_{i}}{|z_{i}|^{2}} \im \left ( \frac{\phi^{1/2} \widetilde{\sigma}_{j}^{-1}}{m(z_{i}) + \phi^{1/2} \widetilde{\sigma}_{j}^{-1}} \right ) \le C\frac{\widetilde{\lambda}_{i} \eta_{i} \im m(z_{i})}{|z_{i}|^{2}} \le C \phi^{-1/2} \eta_{i} \im m(z_{i}) = C (pn)^{-1/2} n^{6\epsilon}.
\end{equation}
It remains to estimate the third term of \eqref{eq:outlier delocalization proof terms}. From \eqref{eq:choice of eta_i} and \eqref{eq:outlier delocalization proof 3},
\begin{equation} \label{eq:outlier delocalization proof term 3 1}
	\min_{k \in \llb r \rrb} \left | [\Upsilon(z_{i})]_{kk} \right | = \min_{k \in \llb r \rrb} \left | \sfd_{k}^{-1} + \frac{\phi^{-1/2} m(z_{i}) \sigma_{k}}{1 + \phi^{-1/2} m(z_{i}) \sigma_{k}} \right | \gtrsim \phi^{-1/2} \im m(z_{i}) = \phi^{-1/2} \frac{n^{6\epsilon}}{n\eta_{i}} \gg \phi^{-1/2} \frac{n^{4\epsilon}}{n\eta_{i}} \ge \| \Delta \|.
\end{equation}
Then,
\begin{equation} \label{eq:outlier delocalization proof term 3 2}
	\| (\Upsilon - \Delta)^{-1} \| \le \| (I -\Upsilon^{-1} \Delta)^{-1} \| \| \Upsilon^{-1} \| \le C \phi^{1/2} \frac{n\eta_{i}}{n^{6\epsilon}}.
\end{equation}
By Assumption \ref{assump:main} (iii)-(iv), the estimate $ |z_{i}| \asymp \phi^{1/2} $, \eqref{eq:outlier delocalization proof term 3 2}, and \eqref{eq:outlier delocalization proof term 3 2}, we have
\begin{equation} \label{eq:outlier delocalization proof term 3}
	\frac{\eta_{i} | m(z_{i}) + \phi^{1/2} \sigma_{j}^{-1} |^{2}}{|z_{i}| (1 + \sfd_{j}) | m(z_{i}) + \phi^{1/2} \widetilde{\sigma}_{j}^{-1} |^{2}} \left \| \Delta + \Delta (\Upsilon - \Delta)^{-1} \Delta \right \| \le C (pn)^{-1/2} n^{4\epsilon}.
\end{equation}
By \eqref{eq:outlier delocalization proof term 1}, \eqref{eq:outlier delocalization proof term 2}, and \eqref{eq:outlier delocalization proof term 3}, we conclude that
\[ \langle \widetilde{\bfu}_{i}, \bfv_{j} \rangle^{2} \prec (pn)^{-1/2}. \]

The case where $ j \in \llb r+1, p \rrb $ can be handled by the same perturbation argument as in the proof of \eqref{eq:outlier eigenvector}. In this case, since $ \widetilde{\sigma}_{j} = \sigma_{j} \asymp 1 $, the scaling differs from that for $ j \in \llb r \rrb $ and yields a bound of $ p^{-1} $. We omit the details.


\subsection{Proof of Theorem \ref{thm:spiked outlier eigenvector projection sum}} \label{app:spiked outlier eigenvector projection sum proof}
We adapt the argument from the proof of Theorem 2.4 in \cite{dly24}. Fix $ i \in \llb r \rrb $. By \eqref{eq:outlier eigenvector stronger} and that $ \ell_{j} $ is bounded from below and above,
\begin{align}
	\sum_{j=r+1}^{p} \ell_{j} \langle \widetilde{\bfu}_{i}, \bfv_{j} \rangle^{2} &= \frac{\mathsf{d}_{i}^{2} \mathfrak{b}_{i}}{\phi (1+\sfd_{i})} \sum_{j=r+1}^{p} \ell_{j} \langle \bfv_{j}, \widetilde{G}(\mathfrak{a}_{i}) \bfv_{i} \rangle \langle \bfv_{i}, \widetilde{G}(\mathfrak{a}_{i}) \bfv_{j} \rangle + \mathrm{O}_{\prec}(n^{-1/2}) \notag \\
	&= \frac{\mathsf{d}_{i}^{2} \mathfrak{b}_{i}}{\phi (1+\sfd_{i})} \left [ \sum_{j=1}^{p} \ell_{j} \langle \bfv_{j}, \widetilde{G}(\mathfrak{a}_{i}) \bfv_{i} \rangle \langle \bfv_{i}, \widetilde{G}(\mathfrak{a}_{i}) \bfv_{j} \rangle - \sum_{j=1}^{r} \ell_{j} \langle \bfv_{j}, \widetilde{G}(\mathfrak{a}_{i}) \bfv_{i} \rangle \langle \bfv_{i}, \widetilde{G}(\mathfrak{a}_{j}) \bfv_{j} \rangle \right ] + \mathrm{O}_{\prec}(n^{-1/2}) \notag \\
	&= \frac{\mathsf{d}_{i}^{2} \mathfrak{b}_{i}}{\phi (1+\sfd_{i})} \left [ \sum_{j=1}^{p} \ell_{j} \langle \bfv_{j}, \widetilde{G}(\mathfrak{a}_{i}) \bfv_{i} \rangle \langle \bfv_{i}, \widetilde{G}(\mathfrak{a}_{i}) \bfv_{j} \rangle - \ell_{i} \left ( \frac{m(\mathfrak{a}_{i}) \sigma_{i}}{1 + \phi^{-1/2} m(\mathfrak{a}_{i}) \sigma_{i}} \right )^{2} \right ] + \mathrm{O}_{\prec}(n^{-1/2}), \label{eq:sum proof 1}
\end{align}
where we use Lemma \ref{lem:G local law} (ii) for the last step. From the definition of $ \widetilde{G} $ in \eqref{eq:G and counterpart of G} and Lemma \ref{lem:local law outside}, the first term can be rewritten as
\begin{align}
	& \sum_{j=1}^{p} \ell_{j} \langle \bfv_{j}, \widetilde{G}(\mathfrak{a}_{i}) \bfv_{i} \rangle \langle \bfv_{i}, \widetilde{G}(\mathfrak{a}_{j}) \bfv_{j} \rangle \\
	= & \ \phi \ell_{i} + 2 \phi \mathfrak{a}_{i} \ell_{i} \langle \bfv_{i}, R_{0}(\mathfrak{a}_{i}) \bfv_{i} \rangle + \phi \mathfrak{a}_{i}^{2} \sum_{j=1}^{p} \ell_{j} \langle \bfv_{j}, R_{0}(\mathfrak{a}_{i}) \bfv_{i} \rangle \langle \bfv_{i}, R_{0}(\mathfrak{a}_{i}) \bfv_{j} \rangle \notag \\
	= & \ \phi \ell_{i} \left ( 1 - \frac{2}{1 + \phi^{-1/2} m (\fa_{i}) \sigma_{i}} \right ) + \phi \mathfrak{a}_{i}^{2} \sum_{j=1}^{p} \ell_{j} \langle \bfv_{j}, R_{0}(\mathfrak{a}_{i}) \bfv_{i} \rangle \langle \bfv_{i}, R_{0}(\mathfrak{a}_{i}) \bfv_{j} \rangle + \mathrm{O}_{\prec}(n^{-1/2}). \label{eq:sum proof 2}
\end{align}
We denote the sum by
\[ \cL_{i} := \sum_{j=1}^{p} \ell_{j} \langle \bfv_{j}, R_{0}(\mathfrak{a}_{i}) \bfv_{i} \rangle \langle \bfv_{i}, R_{0}(\mathfrak{a}_{i}) \bfv_{j} \rangle. \]

For $ \sft > 0 $, we define two resolvents
\begin{align}
	\cR_{\sft}(z) &\equiv \cR_{\sft}(z ; \ell) := (\Sigma_{0}^{1/2} XX^{\top} \Sigma_{0}^{1/2} - z - \phi^{1/2} \sft VLV^{\top})^{-1}, \\
	\sfR_{\sft,\fa_{i}}(z) &\equiv \sfR_{\sft, \fa_{i}}(z;\ell) := (\Sigma_{\sft, \fa_{i}, \ell}^{1/2} XX^{\top} \Sigma_{\sft, \fa_{i}, \ell}^{1/2} - z)^{-1},
\end{align}
where $ L = \operatorname{diag}(\ell_{1}, \dots, \ell_{p}) $ and $ \Sigma_{\sft, \fa_{i}, \ell} = V \Lambda_{\sft, \fa_{i}, \ell} V^{\top} $ with $ \Lambda_{\sft, \fa_{i}, \ell} = (1 + \phi^{1/2} \sft L/\mathfrak{a}_{i})^{-1} \Lambda_{0} $. Then, we have that
\begin{equation} \label{eq:sum proof L_i}
	\cL_{i} = \phi^{-1/2} \frac{\partial \langle \bfv_{i}, \cR_{\sft} (\mathfrak{a}_{i}) \bfv_{i} \rangle}{\partial \sft} \Bigg |_{\sft = 0}
\end{equation}
and
\begin{equation} \label{eq:sum proof R_t(a_i)}
	\cR_{\sft}(\mathfrak{a}_{i}) = V \left ( 1 + \frac{\phi^{1/2} \sft L}{\mathfrak{a}_{i}} \right )^{-1/2} V^{\top} \sfR_{\sft, \fa_{i}}(\mathfrak{a}_{i}) V \left ( 1 + \frac{\phi^{1/2} \sft L}{\mathfrak{a}_{i}} \right )^{-1/2} V^{\top}.
\end{equation}

We now let $ \sft = \mathrm{o}(1) $ and let $ \sfm_{\sft, \fa_{i}}(z) \equiv \sfm_{\sft, \mathfrak{a}_{i}}(z;\ell) $ denote the Stieltjes transform corresponding to $ \Sigma_{\sft,\fa_{i}, \ell} $, defined in the same manner as $ m $ is associated with $ \Sigma_{0} $, that is, $ \sfm_{\sft, \fa_{i}}(z) \in \bbC^{+} $ is the unique solution to the following self-consistent equation
\begin{equation} \label{eq:self-consistent equation variation}
	z = -\frac{1}{\sfm_{\sft, \fa_{i}}(z)} + \frac{\phi^{1/2}}{p} \sum_{j=1}^{p} \frac{(1 + \phi^{1/2} \sft \ell_{j}/\fa_{i})^{-1} \sigma_{j}}{1 + \phi^{-1/2} \sfm_{\sft, \fa_{i}}(z) (1 + \phi^{1/2} \sft \ell_{j}/\fa_{i})^{-1} \sigma_{j}}.
\end{equation}
Since $ \{ (1 + \phi^{1/2} \sft L/ \fa_{i})^{-1} \sigma_{j} \}_{j=1}^{p} $ satisfies Assumption \ref{assump:main} (iii), the local law \eqref{eq:local law outside} also holds for $ \sfR_{\sft, \fa_{i}} $ by substituting $ \Lambda_{0} $ and $ m(z) $ with $ \Lambda_{\sft, \fa_{i}, \ell} $ and $ \sfm_{\sft, \fa_{i}}(z) $, respectively. Together with \eqref{eq:sum proof R_t(a_i)}, this leads to
\begin{align}
	\langle \bfv_{i}, \cR_{\sft} (\mathfrak{a}_{i}) \bfv_{i} \rangle &= \left ( 1 + \frac{\phi^{1/2} \sft \ell_{i}}{\mathfrak{a}_{i}} \right )^{-1} \langle \bfv_{i}, \sfR_{\sft, \fa_{i}}(\fa_{i}) \bfv_{i} \rangle \notag \\
	&= \left ( 1 + \frac{\phi^{1/2} \sft \ell_{i}}{\mathfrak{a}_{i}} \right )^{-1} \left ( \frac{-1}{\fa_{i} ( 1 + \phi^{-1/2} \sfm_{\sft, \fa_{i}}(\fa_{i}) (1 + \phi^{1/2} \sft \ell_{i}/\mathfrak{a}_{i})^{-1} \sigma_{i})} \right ) + \mathrm{O}_{\prec}(\phi^{-1} n^{-1/2}) \notag \\
	&= \frac{-1}{\fa_{i} + \phi^{1/2} \sft \ell_{i} + \fa_{i} \phi^{-1/2} \sfm_{\sft, \fa_{i}}(\fa_{i}) \sigma_{i}} + \mathrm{O}_{\prec}(\phi^{-1} n^{-1/2}) \label{eq:sum proof vRv}
\end{align}
Furthermore, it follows from Lemma \ref{lem:rigidity}, Assumption \ref{assump:main} (iv), and \eqref{eq:sum proof L_i} that $ \| \cR_{\sft}(\fa_{i}) \| \lesssim 1 $ with high probability. Hence, we have
\begin{equation} \label{eq:sum proof L_i mvt}
	\cL_{i} = \frac{\langle \bfv_{i}, \cR_{\sft} (\mathfrak{a}_{i}) \bfv_{i} \rangle - \langle \bfv_{i}, \cR_{0} (\mathfrak{a}_{i}) \bfv_{i} \rangle}{\phi^{1/2}\sft} + \mathrm{O}(\phi^{-1/2} \sft).
\end{equation}
Let $ \sft = \phi^{-1/2} n^{-1/4} $. Combining \eqref{eq:sum proof vRv} and \eqref{eq:sum proof L_i mvt}, we have
\begin{align}
	\cL_{i} &= - \frac{\partial}{\partial \sft} \frac{\phi^{-1/2}}{\fa_{i} + \phi^{1/2} \sft \ell_{i} + \fa_{i} \phi^{-1/2} \sfm_{\sft, \fa_{i}}(\fa_{i}) \sigma_{i}} + \mathrm{O}_{\prec} \left ( \phi^{-3/2} n^{-1/2}/\sft + \phi^{-1/2} \sft \right ) \\
	&= \frac{\ell_{i} + \fa_{i} \phi^{-1} \sigma_{i} \dot{\sfm}_{0, \fa_{i}}(\fa_{i})}{[\fa_{i}+ \fa_{i} \phi^{-1/2} \sigma_{i} m(\fa_{i})]^{2}} + \mathrm{O}(\phi^{-1} n^{-1/4}), \label{eq:sum proof 3}
\end{align}
where $ \dot{\sfm}_{\sft, \fa_{i}}(\fa_{i}) = \frac{\partial}{\partial \sft} \sfm_{\sft, \fa_i}(\fa_{i}) $. From \eqref{eq:estimate of m}, \eqref{eq:sum proof 1}, \eqref{eq:sum proof 2}, and \eqref{eq:sum proof 3}, we conclude that
\[ \sum_{j=r+1}^{p} \ell_{j} \langle \widetilde{\bfu}_{i}, \bfv_{j} \rangle^{2} = \phi^{-1} \fa_{i} \fb_{i} \widetilde{\sigma}_{i} \dot{\sfm}_{0, \fa_{i}}(\fa_{i}) + \mathrm{O}_{\prec}(n^{-1/4}). \]

Finally, we compute $ \dot{\sfm}_{\sft, \fa_{i}}(\fa_{i}) $. By taking partial derivatives to \eqref{eq:self-consistent equation variation} with respect to $ z $ and $ \sft $, we have
\begin{align*}
	1 &= \frac{\sfm_{\sft, \fa_{i}}'(z)}{\sfm_{\sft, \fa_{i}}(z)^{2}} - \frac{1}{p} \sum_{j=1}^{p} \frac{\sfm_{\sft, \fa_{i}}'(z)}{[(1 + \phi^{1/2} \sft \ell_{j}/\fa_{i}) \sigma_{j}^{-1} + \phi^{-1/2} \sfm_{\sft, \fa_{i}}(z)]^{2}}, \\
	0 &= \frac{\dot{\sfm}_{\sft, \fa_{i}}(z)}{\sfm_{\sft, \fa_{i}}(z)^{2}} - \frac{\phi^{1/2}}{p} \sum_{j=1}^{p} \frac{\phi^{1/2} \ell_{j}/(\fa_{i} \sigma_{j}) + \phi^{-1/2} \dot{\sfm}_{\sft, \fa_{i}}(z)}{[(1 + \phi^{1/2} \sft \ell_{j}/\fa_{i}) \sigma_{j}^{-1} + \phi^{-1/2} \sfm_{\sft, \fa_{i}}(\fa_{i})]^{2}}.
\end{align*}
Solving the equations gives that
\begin{equation*}
	\dot{\sfm}_{\sft, \fa_{i}}(\fa_{i}) = \frac{\phi \sfm'_{\sft, \fa_{i}}(\fa_{i})}{p \fa_{i}} \sum_{j=1}^{p} \frac{\ell_{j}/ \sigma_{j}}{[(1 + \phi^{1/2} \sft \ell_{j}/\fa_{i}) \sigma_{j}^{-1} + \phi^{-1/2} \sfm_{\sft, \fa_{i}}(z)]^{2}}.
\end{equation*}
With $ \sft = 0 $,
\begin{equation*}
	\dot{\sfm}_{0, \fa_{i}}(\fa_{i}) = \frac{\phi m'(\fa_{i})}{p \fa_{i}} \sum_{j=1}^{p} \frac{\ell_{j}/ \sigma_{j}}{[\sigma_{j}^{-1} + \phi^{-1/2} m(\fa_{i})]^{2}},
\end{equation*}
which is given in \eqref{eq:mdot}.

\bibliographystyle{plain}
\bibliography{ref}

\end{document}